\theoremstyle{definition}
\def\fnum{equation}
\newtheorem{Thm}[\fnum]{Theorem}
\newtheorem{Cor}[\fnum]{Corollary}
\newtheorem{Lem}[\fnum]{Lemma}
\newtheorem{Def}[\fnum]{Definition}
\newtheorem{Rem}[\fnum]{Remark}
\newtheorem{Pro}[\fnum]{Proposition}
\numberwithin{equation}{section}
\newcommand{\Energy}{{\text{E}}}
\newcommand{\nn}{{\bf{n}}}
\newcommand{\Ric}{{\text{Ric}}}
\newcommand{\Tr}{{\text{Tr}}}
\newcommand{\dd}{{\text {d}}}
\def\K{{\text K}}
\def\RR{{\bold R}}
\def\RP{{\bold{RP}}}
\def\SS{{\bold S}}
\def\CC{{\bold C }}
\newcommand{\dv}{{\text {div}}}
\newcommand{\e}{{\text {e}}}
\newcommand{\Area}{{\text {Area}}}
\newcommand{\cC}{{\mathcal{C}}}
\newcommand{\cB}{{\mathcal{B}}}
\newcommand{\cG}{{\mathcal{G}}}
\newcommand{\cM}{{\mathcal{M}}}
\newcommand{\cP}{{\mathcal{P}}}
\newcommand{\cS}{{\mathcal{S}}}
\newcommand{\eqr}[1]{(\ref{#1})}
\begin{document}

\title[Width and finite  extinction time of Ricci flow]{Width and finite extinction time of Ricci flow}

\author{Tobias H. Colding}%
\address{MIT\\
77 Massachusetts Avenue, Cambridge, MA 02139-4307\\
and Courant Institute of Mathematical Sciences\\
251 Mercer Street, New York, NY 10012.}
\author{William P. Minicozzi II}%
\address{Department of Mathematics\\
Johns Hopkins University\\
3400 N. Charles St.\\
Baltimore, MD 21218}

\thanks{The   authors
were partially supported by NSF Grants DMS  0606629 and DMS
0405695}


\email{colding@math.mit.edu  and minicozz@math.jhu.edu}

\maketitle


\section{Introduction}

This is an expository article with complete proofs intended for a
general non-specialist audience.  The results are two-fold. First,
we discuss a geometric invariant, that we call the width, of a
manifold and show how it can be realized as the sum of areas of
minimal $2$-spheres.  For instance, when $M$ is a homotopy
$3$-sphere, the width is loosely speaking the area of the smallest
$2$-sphere needed to ``pull over'' $M$. Second, we use this to
conclude that Hamilton's Ricci flow becomes extinct in finite time
on any homotopy $3$-sphere. We have chosen to write this since the
results and ideas given here are quite useful and seem to be of
interest to a wide audience.

Given a Riemannian metric on a closed manifold $M$, sweep $M$ out
by a continuous one-parameter family of maps from $\SS^2$ to $M$
starting and ending at point maps. Pull the sweepout tight by, in
a continuous way, pulling each map as tight as possible yet
preserving the sweepout. We show the following useful property
(see Theorem \ref{t:existence} below); cf. 12.5 of \cite{Al},
proposition 3.1 of \cite{Pi}, proposition 3.1 of \cite{CD},
\cite{CM3},  and \cite{CM1}:

\vskip2mm
\parbox{6in}{Each map in the tightened
sweepout whose area is close to the width (i.e., the maximal
energy of the maps in the sweepout) must itself be close to a
collection of harmonic maps. In particular, there are maps in the
sweepout that are close to a collection of immersed minimal
$2$-spheres.}

\vskip2mm This useful property that {\emph{all}} almost maximal
slices are close to critical points is virtually always implicit
in any sweepout construction of critical points for variational
problems yet it is not always recorded since most authors are only
interested in  existence of a critical point.

Similar results hold for sweepouts by curves{\footnote{ Finding
closed geodesics on the $2$-sphere by using sweepouts goes back to
Birkhoff in 1917; see \cite{B1}, \cite{B2},
 section $2$ in \cite{Cr}, and \cite{CM3}. In the 1980s
Sacks-Uhlenbeck, \cite{SaU}, found minimal $2$-spheres  on general
manifolds using Morse theoretic arguments that are essentially
equivalent to sweepouts; a few years later, Jost explicitly used
sweepouts to obtain minimal $2$-spheres in \cite{Jo}. The argument
given here works equally well on any closed manifold, but only
produces non-trivial minimal objects when the width is positive.}}
instead of $2$-spheres; cf. \cite{CM3} where sweepouts by curves
are used to estimate the rate of change of a $1$-dimensional width
for convex hypersurfaces in Euclidean space flowing by positive
powers of their mean curvatures.  The ideas are essentially the
same whether one sweeps out by curves or $2$-spheres, though the
techniques in the curve case are purely ad hoc whereas for
sweepouts by $2$-spheres additional techniques, developed in the
1980s, have to be used to deal with energy concentration (i.e.,
``bubbling''); cf. \cite{SaU} and \cite{Jo}. The basic idea in
each of the two cases is a local replacement process that can be
thought of as a discrete gradient flow.  For curves, this is now
known as Birkhoff's curve shortening process; see \cite{B1},
\cite{B2}.

Local replacement had already been used by H.A. Schwarz in 1870 to
solve the Dirichlet problem in general domains, writing the domain
as a union of overlapping balls, and using that a solution can be
found explicitly on balls by, e.g., the Poisson formula; see
\cite{Sc1} and \cite{Sc2}. His method, which is now known as
Schwarz's alternating method, continues to play an important role
in applied mathematics, in part because the replacements converge
rapidly to the solution. The underlying reason why both Birkhoff's
method of finding closed geodesics and Schwarz's method of solving
the Dirichlet problem converge is convexity.  We will deviate
slightly from the usual local replacement argument and prove a new
convexity result for harmonic maps.  This allows us to make
replacements on balls with small energy, as opposed to balls with
small $C^0$ oscillation.   It is, in our view, much more natural
to make the replacement based on energy and gives, as a
bi-product, a new uniqueness theorem for harmonic maps since
already in dimension two the Sobolev embedding fails to control
the $C^0$ norm in terms of the energy; see Figure \ref{f:f1}.

The second thing we do is explain how to use this property of the
width  to show that on a homotopy $3$-sphere, or more generally
closed $3$-manifolds without aspherical summands, the Ricci flow
becomes extinct in finite time. This was shown by Perelman in
\cite{Pe} and by Colding-Minicozzi in \cite{CM1}; see also
\cite{Pe} for applications to the elliptic part of geometrization.

\begin{figure}[htbp]
 \setlength{\captionindent}{10pt}
    \begin{minipage}[t]{0.5\textwidth}
    \centering\includegraphics[width=2in]{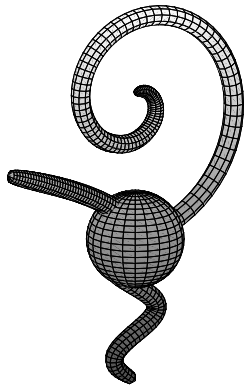}
    \caption{A conformal map to a long thin surface with small area has little energy.
In fact, for a conformal map, the part of the map that goes
 to small area tentacles  contributes little energy and
will be truncated by harmonic replacement.}
    \label{f:f1}
    \end{minipage}\begin{minipage}[t]{0.5\textwidth}
    \centering\input{CMjamsfig.pstex_t}
    \caption{The sweepout, the min--max surface, and the width W.}
    \label{f:f2}
    \end{minipage}
\end{figure}

We would like to thank Fr\'ed\'eric H\'elein, Bruce Kleiner, and
John Lott for their comments.

\section{Width and finite extinction}

On a homotopy $3$-sphere there is a natural way of constructing
minimal surfaces and that comes from the min-max argument where
the minimal of all maximal slices of sweepouts is a minimal
surface.  In \cite{CM1} we looked at how the area of this min-max
surface changes under the flow.  Geometrically the area measures a
kind of width of the $3$-manifold (see Figure \ref{f:f2}) and for
$3$-manifolds without aspherical summands (like a homotopy
$3$-sphere) when the metric evolve by the Ricci flow, the area
becomes zero in finite time corresponding to that the solution
becomes extinct in finite time.\footnote{It may be of interest to
compare our notion of width, and the use of it, to a well-known
approach to the Poincar\'e conjecture.  This approach asks to show
that for any metric on a homotopy $3$-sphere a min-max type
argument produces an \underline{embedded} minimal $2$-sphere. Note
that in the definition of the width it play no role whether the
minimal $2$-sphere is embedded or just immersed, and thus, the
analysis involved in this was settled a long time ago. This
well-known approach has been considered by many people, including
Freedman, Meeks, Pitts, Rubinstein, Schoen, Simon, Smith, and Yau;
see \cite{CD}.}

\subsection{Width}  \label{ss:width}

Let $\Omega$ be the set of continuous maps $\sigma : \SS^2 \times
[0,1] \to M$ so that
 for each $t\in [0,1]$ the map $\sigma (\cdot , t )$
is in $C^0 \cap W^{1,2}$, the map  $t \to \sigma (\cdot , t )$ is
continuous
 from $[0,1]$ to
$C^0 \cap W^{1,2}$, and finally $\sigma$ maps $\SS^2 \times \{ 0
\}$ and $\SS^1 \times \{ 1 \}$ to points.   Given a map
    $\beta \in \Omega$, the homotopy class $\Omega_{\beta}$
 is defined to be the set of maps $\sigma \in \Omega$ that are homotopic to
    $\beta$ through maps in $\Omega$.    We will call any such
     $\beta$  a
{\emph{sweepout}}; some authors use a more restrictive notion
where $\beta$ must also induce a degree one map from $\SS^3$ to
$M$.
  We will, in fact, be most interested in the case where
$\beta$ induces a map from $\SS^3$ to $M$ in a non-trivial
class{\footnote{For example, when $M$ is a homotopy $3$-sphere and
the induced map has degree one.}} in $\pi_3 (M)$.  The reason for
this is that the width is positive in this case and, as we will
see, equal to the area of a non-empty collection of minimal
$2$-spheres.

The (energy) width $W_E = W_E (\beta , M)$ associated to the homotopy class
 $\Omega_{\beta}$ is defined by taking the infimum of the maximum of
the energy of each slice.  That is,  set
\begin{equation}    \label{e:width}
    W_E = \inf_{  \sigma \in \Omega_{\beta}  } \,
       \,  \max_{ t \in [0, 1]} \,  \Energy \, (\sigma (\cdot , t ))
          \, ,
\end{equation}
where the energy is given by
\begin{equation}
    \Energy \, (\sigma
(\cdot , t )) = \frac{1}{2} \, \int_{\SS^2} \, \left| \nabla_x
\sigma (x,t) \right|^2 \, dx \, .
\end{equation}
 Even though this type of
construction is always called min-max, it is really
inf-max.  That is, for each (smooth) sweepout one looks
at the maximal energy of the slices and then takes the infimum
over all sweepouts in a given homotopy class.  The width is always
non-negative by definition, and positive when the homotopy class
of $\beta$ is non-trivial.
  Positivity can, for instance, be seen directly using \cite{Jo}.  Namely,
page 125 in \cite{Jo} shows that if $\max_t \Energy (\sigma (\cdot
, t ))$ is sufficiently small (depending on $M$), then $\sigma$ is
homotopically trivial.{\footnote{See the remarks after Corollary
\ref{c:trivmap2} for a different proof.}}

   One could alternatively define the width
using area rather than energy by setting
\begin{equation}    \label{e:widtharea}
    W_A = \inf_{  \sigma \in \Omega_{\beta}  } \,
       \,  \max_{ t \in [0, 1]} \,  \Area \, (\sigma (\cdot , t ))
          \, .
\end{equation}
The area of a $W^{1,2}$ map $u: \SS^2 \to \RR^N$ is by definition
the integral of the Jacobian $J_u = \sqrt{ \det \left( du^T \, du
\right) }$, where $du$ is the differential of $u$ and $du^T$ is
its transpose.
 That is, if $e_1 , e_2$ is an orthonormal frame on
  $D \subset \SS^2$, then  $J_u = \left( |  u_{e_1} |^2
\,  |  u_{e_2} |^2 - \langle u_{e_1} , u_{e_2}
        \rangle^2 \right)^{\frac{1}{2}} \leq \frac{1}{2} \, |du|^2$ and
\begin{equation}        \label{e:tr1}
         \Area (u \big|_D)= \int_{D} J_u \leq \Energy (u\big|_D)
                   \, .
\end{equation}
Consequently, area is less than or equal to energy with equality
if and only if $\langle  u_{e_1} , u_{e_2}
        \rangle$ and $|u_{e_1}|^2 - |u_{e_2}|^2$ are zero (as $L^1$ functions).
In the case of equality, we say that $u$ is {\it almost
conformal}.
 As in the classical Plateau problem (cf. Section 4 of
\cite{CM2}), energy is somewhat easier to work with  in proving
the existence of minimal surfaces. The next proposition, proven in
Appendix \ref{s:eq}, shows that $W_E=W_A$ as for the Plateau
problem (clearly, $W_A\leq W_E$ by the discussion above).
Therefore, we will drop the subscript and just write $W$.

\begin{Pro}     \label{p:eq}
$W_E=W_A$.
\end{Pro}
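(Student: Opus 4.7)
The inequality $W_A \le W_E$ is immediate by applying \eqref{e:tr1} slice by slice. I will focus on the reverse inequality $W_E \le W_A$, which I would prove by the classical Plateau-type reparametrization: each slice of a nearly minimal sweepout for $W_A$ is to be precomposed with a diffeomorphism of $\SS^2$ that renders the map almost conformal, thereby dropping its energy to nearly its area.

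Given $\epsilon > 0$, first pick $\sigma \in \Omega_\beta$ with $\max_t \Area(\sigma(\cdot,t)) \le W_A + \epsilon$ and abbreviate $u_t := \sigma(\cdot,t)$. Let $h_t := u_t^* g_M$, a (possibly degenerate) nonnegative symmetric $(0,2)$-tensor on $\SS^2$ with eigenvalues $\eta_1(t,x), \eta_2(t,x) \ge 0$ relative to the round metric $g_0$, so that $\Area(u_t) = \int_{\SS^2} \sqrt{\eta_1 \eta_2}\, dx$ and $\Energy(u_t) = \tfrac12\int_{\SS^2}(\eta_1 + \eta_2)\, dx$. For a parameter $\lambda > 0$ to be chosen later, form the genuine Riemannian metric $h_t^\lambda := h_t + \lambda g_0$ and apply the uniformization theorem to produce a conformal diffeomorphism $\phi_t : (\SS^2, g_0) \to (\SS^2, h_t^\lambda)$, uniquely determined after normalizing by the images of three marked points on $\SS^2$. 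Writing $\phi_t^* h_t^\lambda = \mu_t^2 g_0$ and using $\Tr_{g_0}(\phi_t^* g_0) \ge 0$, one gets
\[
\Energy(u_t \circ \phi_t) = \tfrac{1}{2}\int_{\SS^2} \Tr_{g_0}(\phi_t^* h_t)\, dx = \int_{\SS^2} \mu_t^2\, dx - \tfrac{\lambda}{2}\int_{\SS^2} \Tr_{g_0}(\phi_t^* g_0)\, dx \le \int_{\SS^2} \mu_t^2\, dx,
\]
and invariance of total volume under pullback by an orientation-preserving diffeomorphism yields
\[
\int_{\SS^2} \mu_t^2\, dx = \int_{\SS^2} \sqrt{(\eta_1+\lambda)(\eta_2+\lambda)}\, dx \le \Area(u_t) + C\sqrt{\lambda\, \Energy(u_t)} + C\lambda
\]
via $\sqrt{a+b+c} \le \sqrt a + \sqrt b + \sqrt c$ and Cauchy--Schwarz on $\SS^2$. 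Since $t \mapsto u_t$ is continuous into $W^{1,2}$ on a compact interval, $\max_t \Energy(u_t) < \infty$, and I may choose $\lambda$ small enough (depending on $\sigma$ and $\epsilon$) that the error is at most $\epsilon$ uniformly in $t$.

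Setting $\tilde\sigma(x,t) := u_t(\phi_t(x))$ then gives $\Energy(\tilde\sigma(\cdot,t)) \le W_A + 2\epsilon$ for all $t$. To conclude it suffices to check $\tilde\sigma \in \Omega_\beta$: the endpoint slices $u_0, u_1$ are point maps, so any choice of $\phi_t$ yields a point map there, and an initial and final isotopy of $\phi_t$ to the identity (possible since orientation-preserving diffeomorphisms of $\SS^2$ form a connected group) furnishes a homotopy in $\Omega$ from $\sigma$ to $\tilde\sigma$. Letting $\epsilon \downarrow 0$ then yields $W_E \le W_A$.

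The principal technical obstacle is the $t$-continuity of the normalized uniformizing map $\phi_t$ in a topology strong enough that $\tilde\sigma$ lies in $\Omega$ (continuous in $t$ into $C^0 \cap W^{1,2}$). This is not automatic from mere $L^1$-continuity of $h_t$ in $t$; in practice one first approximates $\sigma$ by a sweepout that is smooth (or Hölder) jointly in $(x,t)$ with the same area bound up to $\epsilon$, and then invokes continuous dependence of solutions of the Beltrami equation on their coefficients to transfer continuity from $h_t^\lambda$ to $\phi_t$. The rest of the argument above is unchanged.
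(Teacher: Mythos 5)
Your proposal is correct and follows essentially the same route as the paper's Appendix \ref{s:eq}: regularize the sweepout, make the degenerate pull-back metric positive by adding a small multiple of $g_0$, uniformize via the Ahlfors--Bers theorem with continuous dependence on the metric, and bound the energy of the reparametrized slices by the area of the perturbed metric. The differences are minor: the paper actually proves the two technical ingredients you defer (Lemma \ref{l:density} on $C^2$-regularization continuous in $t$, and Lemma \ref{l:abe} on the $C^0 \cap W^{1,2}$-continuity of the uniformizing maps), and, since $t \mapsto h_t$ is only known to be continuous into $C^0 \cap W^{1,2}$ rather than into the diffeomorphism group, it replaces your appeal to connectedness of $\text{Diff}^{+}(\SS^2)$ by an explicit cutoff/rewind homotopy near $t=1$ (using that the normalized uniformizing map at $t=0$ is the identity).
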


\subsection{Finite extinction}

Let $M^3$ be a smooth closed orientable $3$-manifold and $g(t)$ a
one-parameter family of metrics on $M$ evolving by Hamilton's
Ricci flow, \cite{Ha1}, so
\begin{equation}  \label{e:eqRic}
 \partial_t g=-2\,\Ric_{M_t}\, .
\end{equation}
When $M$ is prime and non-aspherical, then  it follows by standard
topology that $\pi_3 (M)$ is non-trivial (see, e.g., \cite{CM1}).
For such an $M$,   fix a non-trivial homotopy class $\beta \in
\Omega$. It follows that the width $W(g(t)) = W(\beta ,
g(t))$ is positive for each metric $g(t)$. This positivity is the
only place where the assumption on the topology of $M$ is used in
 the theorem below giving an upper bound for the derivative of the
 width under the Ricci flow.  As a consequence, we
 get that the solution of the flow becomes extinct in finite time (see
paragraph 4.4 of \cite{Pe} for the precise definition of
extinction time when surgery occurs).

\begin{Thm}     \label{t:upper}
\cite{CM1}. Let $M^3$ be a closed orientable prime non-aspherical
$3$-manifold equipped with a   metric $g=g(0)$. Under the Ricci
flow, the width $W(g(t))$ satisfies
\begin{equation}   \label{e:di1a}
\frac{d}{dt} \, W(g(t))  \leq -4 \pi + \frac{3}{4 (t+C)} \,
W(g(t))   \, ,
\end{equation}
in the sense of the limsup of forward difference quotients. Hence,
 $g(t)$  becomes extinct in finite time.
\end{Thm}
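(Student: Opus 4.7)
The plan is to bound $\limsup_{h\to 0^+}\tfrac{W(g(t+h))-W(g(t))}{h}$ by using a nearly optimal sweepout at time $t$ as a competitor at time $t+h$. Switching to areas via Proposition \ref{p:eq}, fix $t$ and a small $\epsilon>0$ and pick $\gamma\in \Omega_\beta$ with $\max_s \Area_{g(t)}(\gamma(\cdot,s)) \leq W(g(t))+\epsilon$; apply the tightening of Theorem \ref{t:existence} so that every slice of area within $\delta$ of the maximum is $W^{1,2}$-close to a conformal harmonic map, hence to a (possibly branched) minimal $\SS^2$ in $(M,g(t))$. Since $\Omega_\beta$ is a topological object, $\gamma$ is still admissible for $g(t+h)$, giving the competitor bound $W(g(t+h))\leq \max_s \Area_{g(t+h)}(\gamma(\cdot,s))$.

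Next, estimate the competitor. The first variation of area along $\partial_t g = -2\Ric$ is
\begin{equation}
\frac{d}{dt}\Big|_t \Area_{g(t)}(\Sigma) = -\int_\Sigma \bigl(\Scal - \Ric(\nn,\nn)\bigr)\,dA,
\end{equation}
and on an immersed minimal $\SS^2$, the Gauss equation together with $H=0$ rewrites $K_M(e_1,e_2) = K_\Sigma + \tfrac{1}{2}|A|^2$, so the integrand becomes $\tfrac{1}{2}\Scal + K_\Sigma + \tfrac{1}{2}|A|^2$. Dropping the non-negative $|A|^2$ term and invoking Gauss-Bonnet yields
\begin{equation}
\frac{d}{dt}\Big|_t \Area(\Sigma) \leq -4\pi - \tfrac{1}{2}\int_\Sigma \Scal\,dA.
\end{equation}
A maximum principle applied to the evolution $\partial_t \Scal = \Delta \Scal + 2|\Ric|^2 \geq \Delta \Scal + \tfrac{2}{3}\Scal^2$ (using $|\Ric|^2 \geq \tfrac{1}{3}\Scal^2$ in three dimensions) gives $\Scal_{g(t)} \geq -\tfrac{3}{2(t+C)}$ with $C>0$ determined by $g(0)$, turning the display into $\frac{d}{dt}|_t \Area(\Sigma) \leq -4\pi + \tfrac{3}{4(t+C)}\Area(\Sigma)$ on each near-optimal near-minimal slice.

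Slices whose area is bounded away from $W(g(t))$ cannot catch up to the maximum in time $h$ small, so only the near-minimal near-optimal slices determine $\max_s \Area_{g(t+h)}(\gamma(\cdot,s))$, and on those the displayed bound applies up to errors that vanish as $\epsilon,\delta\to 0$ (choosing, say, $\epsilon=h^2$ along a sequence $h\to 0^+$). This produces the forward-difference-quotient inequality \eqref{e:di1a}. For finite extinction, rewrite \eqref{e:di1a} in integrating-factor form as
\begin{equation}
\frac{d}{dt}\bigl[(t+C)^{-3/4} W(g(t))\bigr] \leq -4\pi\,(t+C)^{-3/4}
\end{equation}
and integrate to obtain $(t+C)^{-3/4} W(g(t)) \leq C^{-3/4} W(g(0)) - 16\pi\bigl[(t+C)^{1/4} - C^{1/4}\bigr]$, whose right-hand side is strictly negative past some finite $T^*$. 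Since $W(g(t))$ is positive while the flow is smooth (by non-triviality of $\Omega_\beta$), the Ricci flow must become extinct on or before $T^*$.

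The main technical obstacle is pushing the ``close to a collection of harmonic maps'' conclusion of Theorem \ref{t:existence} through the first-variation calculation above, which was carried out on a smooth minimal $\SS^2$. The harmonic limit can have branch points and may bubble into several spheres, so one must verify that each bubble sphere contributes its own $4\pi$ from Gauss-Bonnet with the correct multiplicity, and that the $W^{1,2}$-closeness of the actual slice to the limit controls the integral of $\Scal-\Ric(\nn,\nn)$ on the slice by the analogous integral on the limit. A secondary subtlety is that the forward-difference-quotient formulation only requires an upper bound, so rough control of the area change on far-from-optimal slices (via a uniform $|\Ric|$-bound on compact time intervals of smooth flow) is enough to discard them.
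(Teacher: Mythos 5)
Your proposal is correct and follows essentially the same route as the paper: a tightened near-optimal sweepout at time $t$ is used as a competitor at time $t+h$, the first variation of area under $\partial_t g=-2\,\Ric$ combined with the Gauss equation, Gauss--Bonnet (branch points and extra bubble spheres only helping), and the maximum-principle bound $R\geq -\tfrac{3}{2(t+C)}$ controls the near-maximal slices, far-from-maximal slices are discarded for small $h$, and the integrating-factor argument gives finite extinction. The only adjustment is that Theorem \ref{t:existence} provides closeness in the varifold distance rather than $W^{1,2}$, and it is exactly this varifold closeness that yields the transfer estimate \eqr{e:hclose} bounding $\int_f [R-\Ric(\nn_f,\nn_f)]$ by the corresponding integral over the limiting collection of minimal spheres --- the ``main technical obstacle'' you correctly flag.
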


The $4\pi$ in \eqr{e:di1a} comes from the Gauss-Bonnet theorem and
the $3/4$ comes from the bound on the minimum of the scalar
curvature that the evolution equation implies.  Both of these
constants matter whereas the constant $C > 0$ depends on the
initial metric and the actual value is not important.

To see that \eqr{e:di1a} implies finite extinction time rewrite
\eqr{e:di1a} as
\begin{equation}
\frac{d}{dt} \left( W(g(t)) \, (t+C)^{-3/4} \right) \leq - 4\pi \,
(t+C)^{-3/4}
\end{equation} and
integrate to get
\begin{equation}  \label{e:lastaa}
 (T+C)^{-3/4} \, W(g(T)) \leq C^{-3/4} \, W(g(0))
- 16 \, \pi \, \left[ (T+C)^{1/4} - C^{1/4} \right]   \, .
\end{equation}
Since $W \geq 0$ by definition and the right hand side of
\eqr{e:lastaa} would become negative for $T$ sufficiently large,
we get the claim.

\vskip2mm Theorem \ref{t:upper} shows, in particular, that the
Ricci flow becomes extinct for any homotopy $3$-sphere. In fact,
we get as a corollary finite extinction time for the Ricci flow on
all $3$-manifolds without aspherical summands (see $1.5$ of
\cite{Pe} or section $4$ of \cite{CM1} for why this easily
follows):

\begin{Cor}  \label{c:upper}
(\cite{CM1}, \cite{Pe}). Let $M^3$ be a closed orientable
$3$-manifold whose prime decomposition has only non-aspherical
factors and is equipped with a   metric $g=g(0)$. Under the Ricci
flow with surgery, $g(t)$   becomes extinct in finite time.
\end{Cor}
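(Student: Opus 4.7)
The plan is to reduce the corollary to Theorem \ref{t:upper} applied component-by-component throughout the surgered Ricci flow, combining three ingredients: topological bookkeeping across surgeries, non-triviality of $\pi_3$ for each surviving component, and near-monotonicity of the width across surgery times. The key point is that Perelman's surgery removes only small regions (the cores of long thin horns) and replaces them with standard $3$-balls, so nothing dramatic can happen to optimal sweepouts during a single surgery.

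First I would carry out the topological accounting. In Ricci flow with surgery, each surgery cuts along small $2$-spheres and caps them off with $3$-balls. Topologically this implements a connected-sum decomposition: if the pre-surgery component is $N_1 \# N_2$, the post-surgery components are $N_1$ and $N_2$, up to $\SS^3$ summands. Hence every component present at any time in the flow is a connected sum of prime factors drawn from the original prime decomposition of $M$ together with $\SS^3$ summands; components that reduce to unions of $\SS^3$'s are discarded by the surgery procedure. By standard $3$-manifold topology (cf.\ section $4$ of \cite{CM1} and paragraph $1.5$ of \cite{Pe}), any such surviving $N$ has non-trivial $\pi_3(N)$, so a non-trivial class $\beta_N \in \pi_3(N)$ can be fixed, giving a positive width $W(\beta_N, g(t))$ by the argument recalled after \eqr{e:width}. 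Moreover, the proof of Theorem \ref{t:upper} uses the prime non-aspherical hypothesis only through this positivity, so the same proof yields the differential inequality \eqr{e:di1a} for $W(\beta_N, g(t))$ at every smooth (non-surgery) time.

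The hard part, as expected, is bounding the jump of the width at a surgery time, where the metric is cut and pasted and $W$ is not a priori continuous. I anticipate handling this as follows: since the surgery region sits inside a small, thin $\varepsilon$-neck, a near-optimal sweepout of the pre-surgery component can be homotoped slightly so that the $\varepsilon$-neck is traversed by a family of short curves rather than by $2$-spheres of appreciable area, and this modified sweepout restricts to sweepouts of each post-surgery component whose total energy exceeds the original by an amount that tends to zero with the surgery scale. Consequently the width does not increase across a surgery, up to an arbitrarily small error that is harmless. Integrating \eqr{e:di1a} as in \eqr{e:lastaa} between consecutive surgeries and combining with this surgery bound, one finds that each $W(\beta_N, g(t))\,(t+C)^{-3/4}$ is driven strictly downward at a rate bounded away from zero. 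Since $W \geq 0$ by definition and only finitely many components are present at any moment, every component must become extinct in finite time, which gives the corollary.
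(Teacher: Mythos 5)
Your overall skeleton --- prime-factor bookkeeping across surgeries, positivity of the width from a non-trivial class in $\pi_3$ of each surviving component, the inequality \eqr{e:di1a} on smooth time intervals, and a comparison of widths across surgery times --- is exactly the reduction the paper intends; in fact the paper gives no proof of the corollary itself, deferring to section 4 of \cite{CM1} and 1.5 of \cite{Pe}. The genuine gap is in the step you yourself identify as the hard part, and the mechanism you propose for it would not work as stated. A surgery does not remove ``only small regions'': what is small is the cross-sectional scale $h$ of the $\delta$-neck where the cut is made, while the piece discarded beyond the neck can contain entire prime summands, and a near-optimal sweepout of the pre-surgery component will in general have slices going deep into that piece, so it cannot be ``homotoped slightly'' off of it. Moreover, ``restricting'' a sweepout to a post-surgery component is not meaningful: the restriction of a map $\SS^2 \to M$ to the preimage of a component is not a map of $\SS^2$, hence produces no sweepout. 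What is actually needed is either to surger the sweepout maps themselves --- cut each domain sphere along the preimage of the central $2$-sphere of the neck and cap the resulting pieces off with maps into the surgery cap, whose area scale $h^2$ accounts for the small energy error --- or, equivalently, to compose with a Lipschitz-controlled collapse (degree-one) map from the pre-surgery component onto the post-surgery one. In either case one must then verify that the resulting families represent a \emph{non-trivial} class in $\pi_3$ of the post-surgery component; this compatibility of the chosen classes $\beta_N$ under the collapse maps is precisely where the topological hypothesis enters a second time, and your sketch never addresses it.

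Two further ingredients are missing. First, to integrate \eqr{e:di1a} as in \eqr{e:lastaa} across surgery times with a single constant $C$, the lower bound \eqr{e:scalar} for $\min R$ must persist through surgeries; this holds because surgery regions have large positive scalar curvature (the paper flags exactly this point after \eqr{e:scalar}), but your argument silently assumes it. Second, ``up to an arbitrarily small error that is harmless'' is not automatic: the number of surgeries before the predicted extinction time is finite but not bounded a priori, so the per-surgery errors could in principle accumulate and destroy the bound \eqr{e:lastaa}; one must either show the width does not increase at all under surgery or tie the surgery scales to a summable sequence of allowed errors. As written, these points are asserted rather than proved, and together with the ill-defined restriction step they leave the surgery comparison --- the only part of the corollary not already contained in Theorem \ref{t:upper} --- without a proof.
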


Part of Perelman's interest in the question about finite time
extinction  comes from the following:  If one is interested in
geometrization of a homotopy $3$-sphere (or, more generally, a
$3$-manifold without aspherical summands) and knew that the Ricci
flow became extinct in finite time, then one would not need to
analyze what happens to the flow as time goes to infinity. Thus,
in particular, one would not need collapsing arguments.

One of the key ingredients in the proof of Theorem \ref{t:upper}
is the existence of a  sequence of good  sweepouts of $M$, where
each map in the sweepout whose area is close to the width (i.e.,
the maximal energy of any map in the sweepout) must itself be
close to a collection of harmonic maps.  This will be given by
Theorem \ref{t:existence} below, but we will first need a notion
of closeness and a notion of convergence of maps from $\SS^2$ into
a manifold.

\subsection{Varifold convergence} \label{ss:var}
Fix a closed manifold $M$ and let
$\Pi: G_kM\rightarrow  M$
     be the Grassmanian bundle of
(un-oriented) $k$-planes, that is, each fiber $\Pi^{-1}(p)$ is the
set of all $k$-dimensional linear subspaces of the tangent space
of $M$ at $p$.  Since $G_kM$ is compact, we can choose a countable
dense subset $\{h_n\}$ of all continuous functions on $G_kM$ with
supremum norm at most one (dense with respect to the supremum
norm).{\footnote{This is a corollary of the Stone-Weierstrass
theorem; see corollary $35$ on page $213$ of \cite{R}.}} If
$(X_0,F_0)$ and $(X_1, F_1)$ are two compact (not necessarily
connected) surfaces $X_0$, $X_1$ with measurable maps $F_i:X_i\to
G_kM$ so that each $f_i=\Pi \circ F_i$ is in $W^{1,2}(X_i ,M)$ and
$J_{f_i}$ is the Jacobian of $f_i$, then the varifold distance
between them is by definition
\begin{equation}
\dd_V (F_0,F_1)=\sum_n 2^{-n} \left|\int_{X_0} h_n\circ F_0\,
J_{f_0} -\int_{X_1} h_n\circ F_1\,  J_{f_1} \right|\, .
\end{equation}
It follows easily that a sequence $X_i=(X_i, F_i)$ with uniformly
bounded areas converges to $(X,F)$, iff it converges weakly, that
is, if for all $h\in C^0(G_2M)$ we have $\int_{X_i}h\circ F_i\,
J_{f_i} \to \int_{X} h\circ F\, J_f$. For instance, when $M$ is a
$3$-manifold, then $G_2M$, $G_1M$, and $T^1M/\{\pm v\}$ are
isomorphic.  (Here $T^1M$ is the unit tangent bundle.)
If $\Sigma_i$ is a sequence of closed immersed surfaces in $M$
converging to a closed surface $\Sigma$ in the usual $C^k$
topology, then we can think of each surface as being embedded in
$T^1M/\{\pm v\}\equiv G_2M$ by mapping each point to plus-minus the unit
normal vector, $\pm \nn$, to the surface.  It follows easily that
the surfaces with these inclusion maps converges in the varifold
distance.  More generally, if $X$ is a compact surface and $f:X\to
M$ is a $W^{1,2}$ map, where $M$ is no longer assumed to be
$3$-dimensional, then we let $F:X\to G_2M$ be given by that $F(x)$
is the linear subspace $df(T_xX)$.  (When $M$ is $3$-dimensional,
then we may think of the image of this map as lying in $T^1M/\{\pm
v\}$.) Strictly speaking, this is only defined on the measurable
space, where $J_f$ is non-zero; we extend it arbitrarily
to all of $X$ since the corresponding Radon measure on $G_2M$ given by
  $h\to \int_{X} h\circ F\, J_f$ is independent of the
extension.

\subsection{Existence of good sweepouts}

A $W^{1,2}$ map $u$ on a smooth compact surface $D$ with boundary
$\partial D$ is    {\emph{energy minimizing}} to $M \subset \RR^N$
if $u(x)$ is in $M$ for almost every $x$ and
\begin{equation}
    \Energy (u) = \inf \, \{ \Energy (w) \, | \, w\in W^{1,2}(D,M)
\text{ and }\, (w-u) \in
    W_0^{1,2}(D) \} \, .
\end{equation}
The map $u$ is said to be {\emph{weakly harmonic}} if $u$ is a
$W^{1,2}$ weak solution of the harmonic map equation $\Delta u
\perp TM$; see, e.g., lemma $1.4.10$ in \cite{He1}.

The next result gives the existence of a sequence of good
sweepouts.

\begin{Thm}     \label{t:existence}
Given a metric $g$ on $M$ and a map $\beta  \in \Omega$
representing a non-trivial class in $\pi_3 (M)$, there exists  a
sequence of sweepouts $\gamma^j \in \Omega_{\beta}$ with
$\max_{s \in [0,1]} \, \Energy (\gamma^j_s)\to W(g)$,
and so that given $\epsilon >
 0$, there exist $\bar{j}$ and $\delta > 0$
so that if $j > \bar{j}$ and
\begin{equation}    \label{e:eclose}
      \Area (\gamma^j (\cdot , s)) > W(g) - \delta \, ,
\end{equation}
then there are finitely many harmonic maps $u_i : \SS^2 \to M$
 with
\begin{equation}    \label{e:bclose}
    \dd_V  \, (\gamma^j (\cdot , s) , \cup_i \{ u_i \}
     ) < \epsilon \, .
\end{equation}
\end{Thm}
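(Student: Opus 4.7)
The plan is to construct the sequence $\gamma^j$ by pulling a minimizing sequence tight with a harmonic replacement operation, and then to show via a contradiction argument using $\varepsilon$--regularity and bubble--tree analysis that the tightened sweepouts have the desired compactness property.

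\textbf{Step 1 (Harmonic replacement on small--energy disks).} The basic operation is: given a geodesic disk $B\subset\SS^2$ and $u\in W^{1,2}(\SS^2,M)$ whose restriction $u|_B$ has energy below a small threshold $\varepsilon_0=\varepsilon_0(M)$, define $R_B(u)$ to agree with $u$ outside $B$ and equal the unique energy--minimizing map into $M$ with boundary data $u|_{\partial B}$ inside $B$. The existence of such a minimizer in this low--energy regime is standard Morrey/Sacks--Uhlenbeck $\varepsilon$--regularity, and the \emph{new} ingredient advertised in the introduction is a convexity--type uniqueness statement: any two weakly harmonic maps $v_0,v_1:B\to M$ with small energy and coinciding boundary data must agree. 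Convexity moreover makes $u\mapsto R_B(u)$ continuous in $W^{1,2}$ and makes the energy drop $\Energy(u)-\Energy(R_B(u))$ a continuous, non--negative, monotone functional that vanishes only when $u|_B$ is already harmonic.

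\textbf{Step 2 (A continuous ``pull--tight'' map on sweepouts).} Cover $\SS^2$ by two families $\mathcal B^{e},\mathcal B^{o}$ of small geodesic disks so that the disks in each family are pairwise disjoint and the two families still cover $\SS^2$ after a small shrinking. Define $\Psi=R_{\mathcal B^{o}}\circ R_{\mathcal B^{e}}$, where $R_{\mathcal B}$ applies Step 1 simultaneously on the disjoint disks of $\mathcal B$. This is the Birkhoff/Schwarz alternating scheme: $\Psi$ is continuous on the set of $W^{1,2}$ maps for which every relevant disk has energy below $\varepsilon_0/2$, and for a sweepout $\sigma\in\Omega_\beta$ the slice--wise application $s\mapsto \Psi(\sigma(\cdot,s))$ gives a new sweepout in $\Omega_\beta$ whose maximal energy does not exceed that of $\sigma$. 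Starting from an almost minimizing sequence $\sigma^j\in\Omega_\beta$ with $\max_s\Energy(\sigma^j(\cdot,s))\to W(g)$, set $\gamma^j=\Psi^{N(j)}\sigma^j$ for $N(j)\to\infty$ slowly enough that the iteration stays in the regime where $\Psi$ is defined (low--energy slices being already homotopically trivial by Jost's $\varepsilon$--regularity lemma).

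\textbf{Step 3 (The compactness statement, by contradiction).} Suppose the conclusion fails: there exist $\epsilon>0$, indices $j_k\to\infty$ and parameters $s_k$ with $\Area(\gamma^{j_k}(\cdot,s_k))\to W(g)$, yet $\dd_V(\gamma^{j_k}(\cdot,s_k),\cup_i\{u_i\})\ge \epsilon$ for every finite collection of harmonic maps $\{u_i\}$. Apply the Sacks--Uhlenbeck/Jost bubble--tree compactness to the sequence $\{\gamma^{j_k}(\cdot,s_k)\}$: after passing to a subsequence, the energies concentrate at finitely many points, producing a finite collection of harmonic $2$--spheres $u_1,\dots,u_m$. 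Because \eqref{e:eclose} is an \emph{area} hypothesis and $\Area\le\Energy$ by \eqref{e:tr1} while $\max_s\Energy(\gamma^{j_k}(\cdot,s))\to W(g)$, the slices $\gamma^{j_k}(\cdot,s_k)$ are almost conformal; this forces the neck regions in the bubble tree to carry zero area in the limit, so varifold convergence $\gamma^{j_k}(\cdot,s_k)\to\bigcup_i\{u_i\}$ actually holds, contradicting the assumption.

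\textbf{Step 4 (Quantitative energy drop, and where the difficulty lies).} To turn Step 3 into the statement about \emph{every} $j>\bar j$, one needs the quantitative form: if a slice lies at varifold distance $\ge\epsilon$ from every finite harmonic collection, then one of the disks in $\mathcal B^{e}\cup\mathcal B^{o}$ registers a definite energy drop $\eta(\epsilon)>0$ under $R_B$. If this failed, a subsequence would give a counterexample to Step 3. Granted such $\eta(\epsilon)$, iterating $\Psi$ drives $\max_s\Energy$ strictly below $W(g)-\eta(\epsilon)$ on the set of bad slices, which for large $j$ contradicts the definition of $W$; hence all almost--maximal slices of $\gamma^j$ must be varifold--close to a harmonic collection. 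The main obstacle throughout is Step 4, because the quantitative replacement lemma must simultaneously rule out energy concentration (bubbling can a priori occur \emph{inside} the small replacement disks) and the loss of area in necks; the convexity result of Step 1 is exactly what makes the harmonic replacement strictly decrease the energy by a definite, controllable amount whenever the slice is varifold--far from the set of harmonic maps.
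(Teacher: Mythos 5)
Your overall outline (tighten by harmonic replacement using the small-energy convexity/uniqueness result, then a contradiction argument in which almost-conformality of near-maximal slices rules out energy loss in necks) is the right one, but two of your steps have genuine gaps. First, the tightening map in Step 2 is built from a \emph{fixed} cover of $\SS^2$ by two families of small disks, and is only defined when every disk carries energy below the threshold. For the slices that matter --- those with energy near $W(g)>0$ --- there is no way to guarantee this: energy can concentrate at a point (this is exactly the bubbling phenomenon), so no matter how small the fixed disks are, an almost-maximal slice may have energy far above $\epsilon_0$ inside a single disk, and then $\Psi$ is simply undefined there; choosing $N(j)\to\infty$ ``slowly'' does not help, since the obstruction is local energy concentration on a single slice, not the number of iterations. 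This is precisely why the paper's replacement (Theorem \ref{p:tilde}) uses $t$-dependent collections of balls chosen to have small energy (Lemma \ref{l:goodballs}), with continuity in $t$ arranged via Lemma \ref{l:sf} and Corollary \ref{c:trivmap2}, and with Lemma \ref{l:patch} controlling the interaction of overlapping families.

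Second, Steps 3 and 4 lean on compactness that is not available as stated. The slices $\gamma^{j_k}(\cdot,s_k)$ are not harmonic, so Sacks--Uhlenbeck/Jost bubble-tree compactness does not apply to them; what is needed is compactness for maps that are almost harmonic in the replacement sense (Proposition \ref{p:gl2}), and its proof --- in particular ruling out neck loss via Proposition \ref{c:c1} --- requires the replacement estimate on \emph{arbitrary} collections of disjoint small-energy balls, including balls at the tiny scales where bubbling occurs; a drop estimate only on the disks of a fixed cover is far too weak an input. Relatedly, the quantitative lemma you posit in Step 4 (varifold distance $\geq\epsilon$ from all harmonic collections forces a definite drop $\eta(\epsilon)$ on some fixed disk) is both unproven --- your proposed proof by contradiction invokes the very compactness whose hypotheses it is supposed to supply --- and unnecessary: the paper avoids it entirely by proving instead the property ($B_{\Psi}$) of Theorem \ref{p:tilde}, namely that after one application of the tightening map any further harmonic replacement changes the slice by an amount controlled by the energy \emph{already} gained; combined with the fact that near-maximal slices can only have gained $o(1)$ energy (by \eqr{e:tr1} and the definition of $W$), this yields hypothesis (B) of Proposition \ref{p:gl2} with error tending to zero, and the contradiction argument then closes without any uniform $\eta(\epsilon)$.
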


\vskip2mm One immediate consequence of Theorem \ref{t:existence}
is that if $s_j$ is any sequence with $\Area (\gamma^j (\cdot ,
s_j))$ converging to the width $W(g)$ as $j \to \infty$, then a
subsequence of
 $\gamma^j (\cdot ,
s_j)$ converges to a collection of harmonic maps from $\SS^2$ to
$M$.  In particular, the sum of the areas of these maps is exactly
$W(g)$ and, since the maps are automatically conformal, the sum of
the energies is also $W(g)$. The existence of at least one
non-trivial harmonic map from $\SS^2$ to $M$ was first proven in
\cite{SaU}, but they allowed for loss of energy in the limit; cf.
also \cite{St}. This energy loss was ruled out by Siu and Yau,
using also arguments of Meeks and Yau (see Chapter VIII in
\cite{SY}). This was also proven later by Jost in theorem $4.2.1$
of \cite{Jo} which  gives at least one min-max sequence converging
to a collection of harmonic maps.  The convergence in \cite{Jo} is
in a different topology that, as we will see in Appendix
\ref{a:A}, implies varifold convergence.


\subsection{Upper bounds for the rate of change of width}

Throughout this subsection, let $M^3$ be a smooth closed prime and
non-aspherical orientable $3$-manifold and let $g(t)$ be a
one-parameter family of metrics on $M$ evolving by the Ricci flow.
We will prove Theorem \ref{t:upper} giving the upper bound for the
derivative of the width $W(g(t))$ under the Ricci flow. To do
this, we need three things.

One is that the evolution equation for the scalar curvature
   $R= R(t)$,  see   page 16 of \cite{Ha2},
\begin{equation}    \label{e:prescalar}
    \partial_t R = \Delta R + 2 |\Ric|^2   \geq \Delta R +
    \frac{2}{3} \, R^2 \, ,
\end{equation}
implies by a straightforward maximum principle argument that at
time $t > 0$
\begin{equation}    \label{e:scalar}
   R(t) \geq  \frac{1}{1/[ \min R(0)] -2t/3} =  -\frac{3}{2 (t+C)} \, .
\end{equation}
The curvature is normalized so that on the unit $\SS^3$ the Ricci
curvature is $2$ and the scalar curvature is $6$. In the
derivation of \eqr{e:scalar} we implicitly assumed that $\min
R(0)<0$.  If this was not the case, then \eqr{e:scalar} trivially
holds for any $C>0$, since, by \eqr{e:prescalar}, $\min R (t)$ is
always non-decreasing.  This last remark is also used when surgery
occurs.  This is because by construction any surgery region has
large (positive) scalar curvature.

The second thing that we need in the proof is the observation that
if $\{ \Sigma_i \}$  is a collection of branched minimal
$2$-spheres and $f\in W^{1,2}(\SS^2,M)$ with $\dd_V  \, (f ,
\cup_i \Sigma_i ) < \epsilon$, then for any smooth quadratic form
$Q$  on $M$  we have (the unit normal $\nn_f$ is defined where
$J_f \ne 0$)
\begin{equation}     \label{e:hclose}
   \left| \int_{ f } [\Tr (Q) - Q(\nn_{f} ,
   \nn_{f})] - \sum_i \int_{\Sigma_i}
   [\Tr (Q) - Q(\nn_{\Sigma_i} ,
   \nn_{\Sigma_i})]  \right| <   C \, \epsilon \, \| Q \|_{C^1}
\, \Area (f) \, .
\end{equation}

The last thing   is an upper bound for the rate of change of area
of minimal $2$-spheres.
 Suppose that
$X$ is a closed surface and $f:X\to M$ is a
$W^{1,2}$ map, then using \eqr{e:eqRic}
an easy calculation gives (cf. pages 38--41 of \cite{Ha2})
\begin{equation}        \label{e:diffAn}
\frac{d}{dt}_{t=0}\Area_{g(t)}( f ) =-\int_{f} [R - \Ric_M
(\nn_f,\nn_f)] \, .
\end{equation}
If $\Sigma\subset M$ is a closed immersed minimal surface, then
\begin{equation}        \label{e:diffA}
\frac{d}{dt}_{t=0}\Area_{g(t)}( \Sigma )
   =-\int_{\Sigma} \K_{\Sigma}-\frac{1}{2}\int_{\Sigma}[ |A|^2 + R] \,
.
\end{equation}
Here $\K_{\Sigma}$ is the (intrinsic) curvature of $\Sigma$,  $A$
is the second fundamental form of $\Sigma$, and $|A|^2$ is the sum
of the squares of the principal curvatures.  To get \eqr{e:diffA}
from \eqr{e:diffAn}, we used that if $\K_M$ is the sectional
curvature of $M$ on the two-plane tangent to $\Sigma$, then the
Gauss equations and minimality of $\Sigma$ give
$\K_{\Sigma}=\K_M-\frac{1}{2}|A|^2$. The next lemma gives the
upper bound.

\begin{Lem}     \label{l:upper}
If $\Sigma\subset M^3$ is a branched minimal immersion of the
$2$-sphere, then
\begin{equation}    \label{e:in24}
\frac{d}{dt}_{t=0}\Area_{g(t)}(\Sigma)  \leq -4 \pi -
\frac{\Area_{g(0)}(\Sigma)}{2} \, \min_{M} R(0) \, .
\end{equation}
\end{Lem}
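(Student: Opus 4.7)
The plan is to start from the first variation formula \eqref{e:diffA}, which gives
\begin{equation*}
\frac{d}{dt}_{t=0}\Area_{g(t)}(\Sigma)=-\int_{\Sigma}\K_{\Sigma}-\frac{1}{2}\int_{\Sigma}\bigl[|A|^{2}+R\bigr],
\end{equation*}
and then bound each of the three integrals on the right-hand side separately. Strictly speaking, \eqref{e:diffA} was derived for a smooth closed immersed minimal surface, so the first step is to argue that it extends to a branched minimal immersion $f:\SS^{2}\to M$: since the branch points form a finite set and $f$ is a $W^{1,2}$ (in fact smooth away from the branch locus, with the usual local model) harmonic map, both $\Area_{g(t)}(f)$ and its $t$-derivative can be computed by integrating over $\Sigma\setminus\{\text{branch points}\}$, and the Gauss equation $\K_{\Sigma}=\K_{M}-\frac{1}{2}|A|^{2}$ holds pointwise there.

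The key ingredient is the Gauss--Bonnet theorem for a branched minimal immersion of the $2$-sphere: if $f:\SS^{2}\to M$ is a branched minimal immersion with branch points of orders $b_{1},\dots,b_{k}\geq 1$, then
\begin{equation*}
\int_{\Sigma}\K_{\Sigma}=2\pi\,\chi(\SS^{2})+2\pi\sum_{i=1}^{k}b_{i}=4\pi+2\pi\sum_{i=1}^{k}b_{i}\ \geq\ 4\pi,
\end{equation*}
with equality if and only if $f$ is an unbranched immersion. Consequently $-\int_{\Sigma}\K_{\Sigma}\leq -4\pi$. This is what produces the $-4\pi$ term in \eqref{e:in24} and is, in my view, the main conceptual point of the lemma, though the argument is standard once one is willing to invoke the branched Gauss--Bonnet formula.

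The remaining two terms are handled immediately. The term $-\frac{1}{2}\int_{\Sigma}|A|^{2}$ is nonpositive and can simply be dropped. For the scalar curvature term, the pointwise inequality $R(0)\geq \min_{M}R(0)$ gives $\int_{\Sigma}R\geq \min_{M}R(0)\cdot \Area_{g(0)}(\Sigma)$, and hence
\begin{equation*}
-\frac{1}{2}\int_{\Sigma}R\ \leq\ -\frac{\Area_{g(0)}(\Sigma)}{2}\,\min_{M}R(0).
\end{equation*}
(Note this works regardless of the sign of $\min_{M}R(0)$.) Combining the three bounds yields \eqref{e:in24}. The only real obstacle is the justification of branched Gauss--Bonnet and of the first variation formula \eqref{e:diffA} for branched minimal $\SS^{2}$'s, both of which are classical; everything else is algebraic manipulation of \eqref{e:diffA}.
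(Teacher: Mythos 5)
Your proposal is correct and follows essentially the same route as the paper: both start from the first variation formula \eqref{e:diffA}, drop the nonpositive $-\frac{1}{2}\int_{\Sigma}|A|^{2}$ term, apply the Gauss--Bonnet theorem with branch points to get $-\int_{\Sigma}\K_{\Sigma}=-4\pi-2\pi\sum b_i\leq-4\pi$, and bound the scalar curvature term by $\min_M R(0)$ times the area. Your added remarks on justifying \eqref{e:diffA} in the branched setting and on the sign-independence of the scalar curvature estimate are sensible but do not change the argument.
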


\begin{proof}
Let $\{ p_i \}$ be the set of branch points of $\Sigma$ and $b_i >
0$ the order of branching.
 By \eqr{e:diffA}
\begin{equation}
 \frac{d}{dt}_{t=0} \Area_{g(t)}( \Sigma )
\leq -\int_{\Sigma} \K_{\Sigma}-\frac{1}{2}\int_{\Sigma}R = -4\pi
-2\pi
 \sum b_i -\frac{1}{2}\int_{\Sigma}R  \, ,
\end{equation}
where the  equality used the Gauss-Bonnet theorem  with branch
points (this equality also follows from the Bochner type formula
for harmonic maps between surfaces given on page $10$ of \cite{SY}
and the second displayed equation on page $12$ of \cite{SY} that
accounts for the branch points). Note that branch points only help
in the inequality \eqr{e:in24}.
\end{proof}

Using these three things, we can show the upper bound for the rate
of change of the width.

\vskip2mm
\begin{proof}
(of Theorem \ref{t:upper})   Fix a time $\tau$.  Below $\tilde{C}$
denotes a constant depending only on $\tau$ but will be allowed to
change from line to line.  Let $\gamma^j (\tau)$ be the sequence
of sweepouts for the metric $g(\tau)$ given by
 Theorem \ref{t:existence}. We will use the sweepout at
 time $\tau$ as a comparison to get an upper bound for the width
 at times $t > \tau$.   The key for this is the following claim:
Given $\epsilon > 0$, there exist $\bar{j}$ and $\bar{h} > 0$ so
that if $j> \bar{j}$ and $0 < h < \bar{h}$, then
\begin{align}        \label{e:acomp1}
    \Area_{g(\tau + h)}( \gamma^j_s (\tau ) )
    &- \max_{s_0} \, \Area_{g(\tau)}( \gamma^j_{s_0} (\tau ))  \notag
    \\
&\leq
     [-4 \pi + \tilde{C} \, \epsilon +
 \frac{3}{4 (\tau+C)} \, \max_{s_0} \,
\Area_{g(\tau)}( \gamma^j_{s_0} (\tau )) ] \, h + \tilde{C} \, h^2
\, .
\end{align}
 To see why
\eqr{e:acomp1} implies \eqr{e:di1a}, use the equivalence of the
two definitions of widths to get
\begin{equation}        \label{e:defw}
    W(g (\tau + h) ) \leq \max_{s \in [0,1]}
    \Area_{g(\tau + h)}( \gamma^j_s (\tau ) ) \, ,
\end{equation}
and take the limit as $j\to \infty$ (so that{\footnote{This
follows by combining that $\Area_{g(\tau)}( \gamma^j_{s_0} (\tau
)) \leq \Energy_{g(\tau)}( \gamma^j_{s_0} (\tau ))$ by
\eqr{e:tr1}, $\max_{s_0} \, \Energy_{g(\tau)}( \gamma^j_{s_0}
(\tau ))\to W(g(\tau))$, and $W(g(\tau)) \leq \max_{s_0} \,
\Area_{g(\tau)}( \gamma^j_{s_0} (\tau ))$ by the equivalence of
the two definitions of width.}}
 $\max_{s_0} \, \Area_{g(\tau)}( \gamma^j_{s_0} (\tau ))\to
W(g(\tau))$) in \eqr{e:acomp1} to get
\begin{equation}        \label{e:defwq}
    \frac{W(g (\tau + h) ) - W(g (\tau ))}{h}
\leq    -4 \pi + \tilde{C} \, \epsilon +
 \frac{3}{4 (\tau+C)} \, W(g(\tau))     + \tilde{C} \, h
     \, .
\end{equation}
Taking $\epsilon \to 0$ in \eqr{e:defwq} gives \eqr{e:di1a}.

  It remains to prove
\eqr{e:acomp1}. First,  let $\delta > 0$ and $\bar{j}$, depending
on $\epsilon$ (and on $\tau$), be given by Theorem
\ref{t:existence}. If $j
> \bar{j}$ and $\Area_{g(\tau)} (\gamma^j_s (\tau )  ) >  W(g) -
\delta$, then let $\cup_i \Sigma_{s,i}^j (\tau) $ be the
collection of minimal spheres given by Theorem \ref{t:existence}.
Combining \eqr{e:diffAn}, \eqr{e:hclose} with $Q = \Ric_M$, and
Lemma \ref{l:upper} gives
\begin{align}        \label{e:diffAn2}
    \frac{d}{dt}_{t=\tau}\Area_{g(t)}( \gamma^j_s (\tau )) &
    \leq \frac{d}{dt}_{t=\tau}\Area_{g(t)}( \cup_i \Sigma_{s,i}^j (\tau) )
    +  \tilde{C} \, \epsilon \, \| \Ric_M \|_{C^1}
\, \Area_{g(\tau)}( \gamma^j_s (\tau ))\notag \\
        &\leq -4 \pi  - \frac{
\Area_{g(\tau)}( \gamma^j_s (\tau ))}{2} \, \min_{M} R(\tau) +
\tilde{C} \, \epsilon \\
&\leq -4 \pi  +
 \frac{3}{4 (\tau+C)} \, \max_{s_0} \,
\Area_{g(\tau)}( \gamma^j_{s_0} (\tau ))  + \tilde{C} \, \epsilon
\notag \, ,
\end{align}
where the last inequality used the lower bound \eqr{e:scalar} for
$R(\tau)$. Since the metrics $g(t)$ vary smoothly and every
sweepout $\gamma^j$ has uniformly bounded energy, it is easy to
see that $\Area_{g(\tau + h)} (\gamma^j_s (\tau )  )$ is a smooth
function of $h$ with a uniform $C^2$ bound independent of both $j$
and $s$ near $h=0$ (cf. \eqr{e:diffAn}).  In particular,
\eqr{e:diffAn2} and Taylor expansion give $\bar{h} > 0$
 (independent of $j$) so that \eqr{e:acomp1} holds
for  $s$ with $\Area_{g(\tau)} (\gamma^j_s (\tau )  ) >  W(g) -
\delta$.  In
 the remaining case, we have  $\Area (\gamma^j_s (\tau )) \leq
W(g) - \delta$ so the continuity of $g(t)$ implies that
\eqr{e:acomp1} automatically holds after possibly shrinking
$\bar{h}> 0$.
\end{proof}

\subsection{Parameter spaces}
Instead of using the unit interval, $[0,1]$, as the parameter
space for the maps in the sweepout and assuming that the maps
start and end in point maps, we could have used any compact
finite dimensional topological space $\cP$ and required that the
maps are constant on $\partial \cP$ (or that $\partial \cP =
\emptyset$). In this case, let $\Omega^{\cP}$ be the set of
continuous maps $\sigma : \SS^2 \times \cP \to M$ so that
 for each $t \in \cP$ the map $\sigma (\cdot , t )$
is in $C^0 \cap W^{1,2}(\SS^2 , M)$, the map  $t \to \sigma (\cdot
, t )$ is continuous
 from $\cP$ to
$C^0 \cap W^{1,2}(\SS^2 , M)$, and finally $\sigma$ maps $\partial
\cP$  to point maps. Given a map
    $\hat{\sigma} \in \Omega^{\cP}$, the homotopy class
$\Omega^{\cP}_{\hat{\sigma}} \subset \Omega^{\cP}$  is defined to
be the set of maps $\sigma \in \Omega^{\cP}$ that are homotopic to
    $\hat{\sigma}$ through maps in $\Omega^{\cP}$.  Finally,
    the
    width $W=W(\hat{\sigma})$ is
$\inf_{  \sigma \in \Omega^{\cP}_{\hat{\sigma}}  } \,
       \,  \max_{ t \in \cP} \,  \Energy \, (\sigma (\cdot , t ))
        $.
With only trivial changes, the same proof yields Theorem
\ref{t:existence}  for these general parameter
spaces.{\footnote{The main change is in Lemma \ref{l:goodballs}
below where the bound $2$ for the multiplicity in (1) becomes
$\dim (\cP) + 1$.  This follows from the definition of (covering)
dimension; see pages $302$ and $303$ in \cite{Mu}.}}

\section{The energy decreasing map and its consequences}

To prove Theorem \ref{t:existence}, we will first define an energy
decreasing map from $\Omega$ to itself that preserves the homotopy
class (i.e., maps each $\Omega_{\beta}$ to itself)
 and record its key properties.  This should be thought of as a
generalization of Birkhoff's curve shortening process that plays a
similar  role   when tightening a sweepout by curves; see
\cite{B1}, \cite{B2}, \cite{Cr}, and \cite{CM3}.

Throughout this paper,  by a {\emph{ball}} $B \subset \SS^2$, we
will mean a subset of $\SS^2$ and a stereographic projection
$\Pi_B$ so that $\Pi_B (B) \subset \RR^2$ is a ball.
 Given $\rho
> 0$, we will let $\rho \, B \subset \SS^2$ denote $\Pi_B^{-1}$
of the ball with the same center as $\Pi_B (B)$ and radius $\rho$
times that of $\Pi_B (B)$.

\begin{Thm}     \label{p:tilde}
There is a constant $\epsilon_0 > 0$ and a continuous function
$\Psi:[0, \infty) \to [0,\infty)$ with $\Psi (0)=0$, both
depending on $M$, so that given any $\tilde{\gamma} \in \Omega$
without non-constant harmonic slices and $W>0$, there exists
${\gamma} \in \Omega_{\tilde{\gamma}}$ so that $\Energy (\gamma
(\cdot , t)) \leq \Energy (\tilde{\gamma}(\cdot , t))$ for each
$t$ and so for each $t$ with $\Energy (\tilde{\gamma} (\cdot , t))
\geq W/2$:
\begin{enumerate}
\item[($B_{\Psi}$)] If $\cB$ is any finite collection
of disjoint closed balls in $\SS^2$  with $\int_{\cup_{\cB} B}
|\nabla \gamma (\cdot , t)|^2 < \epsilon_0$ and $v: \cup_{\cB}
\frac{1}{8} B \to M$ is an energy minimizing map equal to
${\gamma} (\cdot , t)$ on $ \cup_{\cB} \frac{1}{8} \partial \, B$,
then
\begin{equation}    \label{e:condtilde}
    \int_{\cup_{\cB} \frac{1}{8}
B} \left| \nabla \gamma (\cdot , t) - \nabla v \right|^2 \leq \Psi
    \left[
\Energy (\tilde{\gamma} (\cdot , t)) - \Energy ( {\gamma} (\cdot ,
t))\right]  \, .  \notag
\end{equation}
\end{enumerate}
\end{Thm}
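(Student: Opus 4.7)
\medskip
\noindent\textbf{Proof plan.} The plan is to construct $\gamma$ from $\tilde\gamma$ by iterating harmonic replacement on small-energy balls in $\SS^2$, and then read off $(B_\Psi)$ from a quantitative convexity estimate for energy minimizers.

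The analytic heart of the argument is a convexity/uniqueness lemma for small-energy maps: there exist $\epsilon_0 = \epsilon_0(M)>0$ and $C=C(M)$ such that if $u:B\to M$ is $W^{1,2}$ with $\int_B |\nabla u|^2 < \epsilon_0$ and $v$ is the energy minimizer on $\tfrac{1}{8}B$ agreeing with $u$ on $\tfrac{1}{8}\partial B$, then $v$ is unique and
\[
    \int_{\tfrac{1}{8}B} |\nabla u - \nabla v|^2 \leq C\bigl(\Energy(u|_{\tfrac{1}{8}B}) - \Energy(v)\bigr).
\]
For a Euclidean target this is just the Pythagorean identity $\int|\nabla u|^2 = \int|\nabla v|^2 + \int|\nabla(u-v)|^2$. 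For a curved target I would use small-energy $\varepsilon$-regularity (Sacks--Uhlenbeck) to force $v$ to be nearly constant on $\tfrac{1}{8}B$, and reduce to the flat case by composing with the exponential map at $v(x_0)$, absorbing the curvature error into $C$ as $\epsilon_0\to 0$. A byproduct is that the small-energy minimizer depends continuously in $W^{1,2}$ on its boundary trace, which makes harmonic replacement continuous in the sweepout parameter.

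To build $\gamma$, I would enumerate the (countably many) finite disjoint subcollections $\cB_1,\cB_2,\ldots$ of balls in $\SS^2$ with rational centers and radii. Set $\gamma^{(0)} = \tilde\gamma$, and define $\gamma^{(k)}$ inductively by the rule: at each $t$ with $\Energy(\gamma^{(k-1)}(\cdot,t)) \geq W/2$, for every $B\in\cB_k$ with $\int_B |\nabla\gamma^{(k-1)}(\cdot,t)|^2 < \epsilon_0$, replace $\gamma^{(k-1)}(\cdot,t)$ on $\tfrac{1}{8}B$ by the unique energy minimizer with the given boundary trace, using a smooth cutoff in $t$ near the level $\{\Energy = W/2\}$ to preserve continuity and the homotopy class. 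Each step is energy-decreasing at every $t$, and a standard diagonal extraction produces a limit $\gamma\in\Omega_{\tilde\gamma}$.

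The main obstacle is verifying $(B_\Psi)$ for an \emph{arbitrary} $\cB$, since the construction directly controls replacements only on collections from the dense enumeration. For an arbitrary $\cB$ with $\int_{\cup\cB}|\nabla\gamma(\cdot,t)|^2 < \epsilon_0$, applying the convexity lemma on each ball yields
\[
    \int_{\cup_{\cB}\tfrac{1}{8}B}|\nabla\gamma(\cdot,t)-\nabla v|^2 \leq C\bigl(\Energy(\gamma(\cdot,t)|_{\cup \tfrac{1}{8}B}) - \Energy(v)\bigr),
\]
so it suffices to bound the right-hand side by some $\Psi(\delta(t))$, where $\delta(t) = \Energy(\tilde\gamma(\cdot,t)) - \Energy(\gamma(\cdot,t))$. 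I would approximate $\cB$ by a member $\cB_k$ of the dense family (using the continuous dependence of the minimizer on boundary data and ball geometry), and then argue that any substantial energy drop on $\cB$ would force a nearly equal drop on $\cB_k$; such a drop would have been realized at step $k$ of the construction, so $\delta(t)$ cannot be too small. Making this qualitative implication ``small $\delta(t)$ forces small drop'' quantitative, by tracking the rate at which the iterative procedure exhausts available replacements together with the modulus of continuity of the approximation, produces the required $\Psi$ with $\Psi(0)=0$. The hypothesis that $\tilde\gamma$ has no non-constant harmonic slices is used only to rule out stagnation of the iteration at a non-trivial critical slice, so that $\Psi$ can indeed be chosen continuous at $0$.
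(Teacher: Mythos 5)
There is a genuine gap, and it sits exactly at the point you flag as the ``main obstacle.'' Your plan replaces the paper's finite construction by an infinite iteration over a countable dense family of ball collections, and then asserts that a substantial potential energy drop for the final map $\gamma(\cdot,t)$ on an arbitrary collection $\cB$ would ``force a nearly equal drop on $\cB_k$,'' which ``would have been realized at step $k$.'' This inference does not follow: the map at step $k$ is not $\gamma$, and harmonic replacements on different (overlapping) collections do not commute, so the fact that you replaced on $\cB_k$ at some earlier stage says nothing directly about how much energy replacement on $\cB$ (or on $\cB_k$) can still remove from the final map. Quantifying precisely this non-commutativity is the analytic core of the paper's proof: Lemma \ref{l:patch}, whose inequalities \eqr{e:gap} and \eqr{e:gap2} are proven by cut-and-paste comparison maps glued along annuli (Lemma \ref{l:approx}), bounds the possible drop after a prior replacement in terms of the possible drop before it, up to an error $\kappa^{-1}\bigl(\Energy(u)-\Energy[H(u,\cB_1)]\bigr)^{1/2}$. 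Because that error is incurred at \emph{each} replacement, the paper is careful (Lemma \ref{l:goodballs}, via compactness of $[0,1]$ and a covering argument with continuous radius functions) to arrange that at most \emph{two} replacement families are active at any $t$; your scheme performs unboundedly many replacements at a given $t$, and there is no mechanism in the proposal to prevent these errors from accumulating, nor to make the ``rate at which the iteration exhausts available replacements'' quantitative with a modulus $\Psi$ depending only on $M$. In addition, the limit of your infinite iteration is not shown to exist in $C^0\cap W^{1,2}$ uniformly in $t$: energy monotonicity gives bounds, not convergence, and a ``diagonal extraction'' does not produce a slice-wise limit that is continuous in $t$ and homotopic to $\tilde{\gamma}$ — the paper sidesteps this entirely by making only finitely many replacements and invoking the continuity of small-energy replacement (Corollary \ref{c:trivmap2}).

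A secondary but related weakness is your sketch of the convexity lemma itself. Its statement is the paper's Theorem \ref{l:trivmap}, but your proposed proof (use $\varepsilon$-regularity to make $v$ nearly constant, then linearize by composing with the exponential map) runs into the difficulty the paper explicitly highlights: in two dimensions, small energy does \emph{not} control the $C^0$ oscillation of $u$, so $u$ need not stay in a coordinate ball around $v(x_0)$ and the reduction to the flat Pythagorean identity breaks down. The paper's proof instead goes through Wente's lemma and H\'elein's moving-frame estimates (Propositions \ref{c:hardy} and \ref{p:harmhardy}) to control $\int |u-v|^2\,|\nabla v|^2$, which is what makes replacement based on energy, rather than on $C^0$ oscillation, possible. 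If you cite Theorem \ref{l:trivmap} rather than reprove it, that issue disappears, but the missing quantitative interaction estimates of Lemma \ref{l:patch} and the finite two-at-a-time covering structure remain essential and are not supplied by the proposal.
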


The proof of Theorem \ref{p:tilde} is given in Section
\ref{s:birk}.  The second ingredient that we will need to prove
  Theorem
\ref{t:existence}  is a compactness result that generalizes
compactness of harmonic maps to maps that are closer and closer to
being harmonic (this is Proposition \ref{p:gl2} below and will be
proven in Appendix \ref{s:ppp}).

\subsection{Compactness of almost harmonic maps}

 Our notion of almost harmonic relies on two
important properties of harmonic maps from $\SS^2$ to $M$. The
first is that harmonic maps from $\SS^2$ are conformal and, thus,
energy and area are equal; see (A) below.  The second is that any
harmonic map from a surface is energy minimizing when restricted
to balls where the energy is sufficiently small; see (B) below.

In the proposition,  $\epsilon_{SU} >0$ (depending on $M$) is the
small energy constant from  lemma
 $3.4$ in  \cite{SaU}, so that we get interior estimates for
 harmonic maps with energy at most $\epsilon_{SU}$.  In
 particular, any non-constant harmonic map from $\SS^2$ to $M$
 has energy greater than $\epsilon_{SU}$.

 \begin{Pro}     \label{p:gl2}
Suppose that   $\epsilon_0 , E_0 > 0$ are constants with
$\epsilon_{SU} > \epsilon_0$ and
 $u^j:\SS^2 \to M$ is a sequence of  $C^0 \cap W^{1,2}$ maps with
  $E_0 \geq \Energy(u^j)$ satisfying: \\ (A) $\Area (u^j) > \Energy (u^j) -
1/j$. \\ (B) For any finite collection $\cB$ of disjoint closed
balls in $ \SS^2$ with $\int_{\cup_{\cB} B}\, |\nabla u^j|^2 <
\epsilon_0$  there is an
 energy minimizing
map $v:   \cup_{\cB} \frac{1}{8}B \to M$ that equals $u^j$ on $
\cup_{\cB} \frac{1}{8} \partial B $ with
\begin{equation}
    \int_{\cup_{\cB} \frac{1}{8}B} \left| \nabla u^j - \nabla v \right|^2 \leq  1/j \,
    .  \notag
\end{equation}
If (A) and (B) are satisfied, then   a subsequence of the $u^j$'s
  varifold converges to a collection of harmonic maps $v^0 ,
\dots , v^m:\SS^2 \to M$.
\end{Pro}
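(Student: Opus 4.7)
The plan is to run a bubble-tree extraction in the spirit of Sacks--Uhlenbeck, with hypothesis (B) playing the role of the classical $\epsilon$-regularity for weakly harmonic maps and hypothesis (A) ruling out energy loss in neck regions.

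After passing to a subsequence, the uniform energy bound lets us identify a finite concentration set
\begin{equation*}
S=\Bigl\{p\in\SS^2:\ \liminf_{r\to 0}\liminf_{j\to\infty}\int_{B_r(p)}|\nabla u^j|^2\ge\epsilon_0\Bigr\},
\end{equation*}
of size at most $E_0/\epsilon_0$. On any compact $K\subset\SS^2\setminus S$ we cover $K$ by finitely many $\tfrac18 B_\alpha$ such that $\int_{B_\alpha}|\nabla u^j|^2<\epsilon_0$ for all large $j$, apply (B) to $\{B_\alpha\}$, and obtain energy-minimizing (hence weakly harmonic) maps $v^j$ on $\cup\tfrac18 B_\alpha$ with $\int|\nabla u^j-\nabla v^j|^2\le 1/j$. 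Since $\epsilon_0<\epsilon_{SU}$, the Sacks--Uhlenbeck interior $\epsilon$-regularity yields uniform $C^k_{\mathrm{loc}}$ estimates on the $v^j$; hence $v^j$ subconverges smoothly to a harmonic map and $u^j\to v^0$ in $W^{1,2}_{\mathrm{loc}}(\SS^2\setminus S)$. By the finite-energy removable singularity theorem for harmonic maps from punctured surfaces, $v^0$ extends to a harmonic map $v^0:\SS^2\to M$.

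Next, at each $p\in S$ we extract a bubble by rescaling. Choose $p^j\to p$ and $\lambda^j\to 0$ so that the rescalings $\tilde u^j(x)=u^j(p^j+\lambda^j x)$ satisfy $|\nabla\tilde u^j|\le 1=|\nabla\tilde u^j(0)|$ on a ball of radius $r_j/\lambda^j\to\infty$. Because energy and the hypotheses (A), (B) are conformally invariant in dimension two, the argument of the previous paragraph applies to $\tilde u^j$ on compact subsets of $\RR^2$ minus a (possibly new) finite concentration set, producing a harmonic limit $w:\RR^2\to M$ with $|\nabla w(0)|=1$ and finite energy; by removable singularities $w$ extends to a non-constant harmonic sphere with $\Energy(w)\ge\epsilon_{SU}$. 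Iterating at secondary concentration points on the bubbles, each step consumes at least $\epsilon_{SU}$ of the energy budget $E_0$, so the process terminates after at most $E_0/\epsilon_{SU}$ steps and produces harmonic maps $v^1,\ldots,v^m$.

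Finally we establish varifold convergence of the $u^j$ to $\{v^0,v^1,\ldots,v^m\}$. Decompose the domain into disjoint base, bubble, and neck regions (the latter being dyadic annuli between consecutive scales). On base and bubble regions the strong $W^{1,2}$ convergence above, together with conformality of harmonic spheres, gives convergence of the Jacobian measures to those of the $v^i$. The main obstacle, and the place where (A) is indispensable, is controlling the necks. Each neck can be covered by dyadic annuli on which the energy is below $\epsilon_0$ (otherwise it would be a further bubble scale); (B) then forces $u^j$ to be $W^{1,2}$-close to an energy minimizer there, and the almost-conformality (A) rules out ``thin-tube'' minimizers whose area is much smaller than their energy. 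A standard three-circle argument on these annuli then forces both energy and area on the necks to vanish in the limit, giving $\int h\circ F^j\, J_{u^j}\to\sum_i \int h\circ F^{v^i}\, J_{v^i}$ for every $h\in C^0(G_2M)$ and thus the desired varifold convergence.
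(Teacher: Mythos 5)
Your overall architecture is the same as the paper's: a concentration set, strong local $W^{1,2}$ convergence obtained by combining (B) with Sacks--Uhlenbeck $\epsilon$-regularity for the replaced minimizers, bubbling at the concentration points, use of (A) to prevent energy loss on necks, and finally passage from bubble to varifold convergence. The genuine gap is in your bubble extraction. You pick the blow-up scales by pointwise gradient normalization, $|\nabla\tilde u^j|\le 1=|\nabla\tilde u^j(0)|$, but the $u^j$ are only $C^0\cap W^{1,2}$ maps satisfying no equation: $\sup|\nabla u^j|$ need not be finite or attained, there is no $\epsilon$-regularity for the $u^j$ themselves at the rescaled scale, and --- most importantly --- the normalization $|\nabla\tilde u^j(0)|=1$ does not pass to the limit under the only convergence available (weak $W^{1,2}$, strong away from a possibly new concentration set). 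So the limit $w$ may be constant, the chain ``$|\nabla w(0)|=1\Rightarrow w$ non-constant $\Rightarrow \Energy(w)\ge\epsilon_{SU}$'' breaks, and with it your termination argument ``each step consumes at least $\epsilon_{SU}$'': at a blow-up stage all the energy can escape into a smaller scale or toward the puncture, producing a constant limit and no a priori progress. The paper instead selects the scale by energy: $r_j$ is the smallest radius such that the energy of $u^j$ on $B_\rho(x)\setminus B_{r_j}(y)$ equals a fixed amount $\epsilon_3$ for some ball $B_{r_j}(y)\subset B_\rho(x)$, and it is precisely the minimality of $r_j$ that rules out the degenerate case (constant limit with a single new concentration point) and yields the strict decrease of the concentration measure (the claim $(\star)$ in the proof), which is what makes the iteration stop after at most $3E_0/\epsilon_3$ steps. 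Your extraction would have to be replaced by something of this kind.

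The neck step is also only a gesture where the real quantitative work lies. What is needed (the paper's Propositions \ref{c:ann2} and \ref{c:c1}) is that a map of small energy on a long cylinder which is almost harmonic in the replacement sense has $\theta$-energy at most a small fraction of its total energy there; this is proved via a differential inequality for $\int_t|u_\theta|^2$ giving exponential growth (the ``three-circles'' mechanism you allude to), the constancy in $t$ of $\int_t(|u_t|^2-|u_\theta|^2)$ coming from the Hopf differential, a compactness argument to pass from harmonic to almost harmonic maps on cylinders of fixed length, and a good/bad covering argument for cylinders of unbounded length. Note also that (A) is a hypothesis on $u^j$, not on the local minimizers, so the correct use is: near-radiality on the neck makes the area on the neck a definite fraction smaller than its energy, and since the putative lost energy $\hat\delta>0$ bounds the neck energy from below (which is also what makes the $1/j$ replacement error in (B) small \emph{relative} to the neck energy, as the almost-harmonicity definition requires), this contradicts global almost-conformality (A). With these two engines supplied --- the energy-normalized blow-up with strict decrease, and the cylinder lemma --- your outline becomes the paper's proof.
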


One immediate consequence of Proposition \ref{p:gl2} is a
compactness theorem for sequences of {\underline{harmonic}} maps
with bounded energy. This was proven by Jost  in lemma $4.3.1$ in
\cite{Jo}. In fact, Parker proved compactness of bounded energy
harmonic maps in a stronger topology, with $C^0$ convergence in
addition to $W^{1,2}$ convergence; see theorem $2.2$ in \cite{Pa}.
Therefore, it is perhaps not surprising that a similar compactness
holds for sequences that are closer and closer to being harmonic
in the sense above.  However, it is useful to keep in mind that
Parker has constructed sequences of   maps where the Laplacian is
going to zero in $L^1$ and yet there is no convergent subsequence
(see proposition $4.2$ in \cite{Pa}).

Finally, we point out that Proposition  \ref{p:gl2}  can be
thought of as a discrete version of Palais-Smale Condition (C).
Namely, if we have a sequence of maps where the maximal energy
decrease from harmonic replacement goes to zero, then a
subsequence converges to a collection of harmonic maps.

\subsection{Constructing good sweepouts from the energy decreasing
map on $\Omega$} Given Theorem \ref{p:tilde} and Proposition
\ref{p:gl2}, we will prove Theorem \ref{t:existence}. Let
$\cG^{W+1}$ be the set of collections of harmonic maps from
$\SS^2$ to $M$ so that the sum of  the energies is at most $W+1$.

\begin{proof}
(of Theorem \ref{t:existence}.)
Choose a sequence of maps
$\tilde{\gamma}^j \in \Omega_{ {\beta}}$
 with
\begin{equation}    \label{e:minseq}
    \max_{t \in [0,1]} \, \, \Energy \, (\tilde{\gamma}^j (\cdot , t))
    < W + \frac{1}{j} \, ,
\end{equation}
and so that $\tilde{\gamma}^j (\cdot , t)$ is not harmonic unless
it is a constant map.{\footnote{To do this, first use Lemma
\ref{l:density} (density of $C^2$-sweepouts) to choose
$\tilde{\gamma}^j_1 \in \Omega_{ {\beta}}$  so  $t \to
\tilde{\gamma}^j_1 (\cdot , t)$ is continuous from $[0,1]$ to
$C^2$ and $\max_{t \in [0,1]} \, \, \Energy \, (\tilde{\gamma}^j_1
(\cdot , t))
    < W + \frac{1}{2j}$.  Using stereographic projection, we can
    view
$\tilde{\gamma}^j_1 (\cdot , t)$ as a map from $\RR^2$.
    Now fix a $j$.  The continuity in $C^2$ gives a uniform bound
     $\sup_{t \in [0,1]} \, \sup_{B_1}
|\nabla \tilde{\gamma}^j_1 (\cdot , t)|^2 \leq C$ for some $C$.
Choose $R >0$ with $4\pi C \, R^2 \leq 1/(2j)$.
     Define a map $\Phi: \RR^2 \to \RR^2$ in polar
    coordinates by: $\Phi (r, \theta) = (2r, \theta)$ for $r<
    R/2$, $\Phi (r, \theta) = (R, \theta)$ for $
    R/2 \leq r \leq R$, and $\Phi (r, \theta) = (r,\theta)$ for
    $R< r$.  Note that $\Phi$ is  homotopic to the
    identity, is conformal away from the annulus $B_R \setminus B_{R/2}$, and
    on $B_R \setminus B_{R/2}$ has $|\partial_r \Phi| =0$ and
    $|d\Phi| \leq 2$.  It follows that $\tilde{\gamma}^j ( \cdot , t) = \tilde{\gamma}^j_1
    (\cdot , t) \circ \Phi$ is in $\Omega_{\beta}$, satisfies
    \eqr{e:minseq}, and has $\partial_r \tilde{\gamma}^j ( \cdot , t) = 0$ on
     $B_{R}\setminus B_{R/2}$.  Since harmonic maps from
    $\SS^2$ are conformal (corollary $1.7$ in \cite{SaU}), any harmonic $\tilde{\gamma}^j ( \cdot ,
    t)$ is constant on $B_{R}\setminus B_{R/2}$ and, thus,
     constant on $\SS^2$ by unique continuation (theorem $1.1$ in \cite{Sj}).}}
We can assume that $W>0$ since otherwise $\Area (\tilde{\gamma}^j
( \cdot ,
    t)) \leq \Energy (\tilde{\gamma}^j ( \cdot ,
    t)) \to 0$ and
the theorem follows trivially.

 Applying Theorem \ref{p:tilde} to the
$\tilde{\gamma}^j$'s gives a
 sequence $\gamma^j \in \Omega_{\beta}$ where each
$ {\gamma}^j (\cdot , t)$ has energy at most that of
$\tilde{\gamma}^j (\cdot , t)$. We will argue by contradiction to
show that the $\gamma^j$'s have the desired property.  Suppose,
therefore, that there exist $j_k \to \infty$ and $s_k \in [0,1]$
with $\dd_V (\gamma^{j_k} ( \cdot , s_k), \cG^{W+1}) \geq \epsilon
> 0$ and $
 \Area (\gamma^{j_k} ( \cdot, s_k))
> W - 1/k$.
Thus, by
 \eqr{e:minseq} and the fact that $\Energy (\cdot) \geq \Area (\cdot) $, we get
 \begin{equation}    \label{e:ecloseq}
       \Energy (\tilde{\gamma}^{j_k} ( \cdot, s_k)) - \Energy (\gamma^{j_k} ( \cdot, s_k))   \leq
      \Energy (\tilde{\gamma}^{j_k} ( \cdot, s_k)) - \Area ({\gamma}^{j_k} ( \cdot, s_k))
      \leq 1/k + 1/j_k \to 0 \, ,
\end{equation}
and, similarly,  $\Energy \left( \gamma^{j_k} ( \cdot, s_k)
\right) - \Area  \left( \gamma^{j_k} ( \cdot, s_k) \right) \to 0$.
  Using \eqr{e:ecloseq} in Theorem
\ref{p:tilde} gives\\ (B) If $\cB$ is any collection of disjoint
closed balls in $ \SS^2$  with $\int_{\cup_{\cB} B}\, |\nabla
{\gamma}^{j_k} ( \cdot, s_k)|^2 < \epsilon_0$  and $v: \cup_{\cB}
\frac{1}{8}B \to M$ is an
 energy minimizing
map  that equals ${\gamma}^{j_k} ( \cdot, s_k)$ on $ \cup_{\cB}
\frac{1}{8} \partial B $, then
\begin{equation}    \label{e:lfv2a}
    \int_{ \cup_{\cB} \frac{1}{8}B } \left| \nabla {\gamma}^{j_k} ( \cdot, s_k) - \nabla v \right|^2 \leq  \Psi (1/k + 1/j_k) \,
     \to 0 \, .
\end{equation}
 Therefore, we can apply Proposition \ref{p:gl2} to get that a
subsequence  of the ${\gamma}^{j_k} ( \cdot, s_k)$'s varifold
converges to a collection of harmonic maps.
  However, this contradicts the lower
bound for the varifold distance to $\cG^{W+1}$, thus completing
the proof.
\end{proof}

\section{Constructing the energy decreasing map}    \label{s:birk}

\subsection{Harmonic replacement}    \label{s:caseB}

The energy decreasing map from $\Omega$ to itself will be given by
a repeated replacement procedure.  At each step, we replace a map
$u$ by a map $H(u)$ that coincides with $u$ outside a ball and
inside the ball is equal to an energy-minimizing map with the same
boundary values as $u$. This is often referred to as
{\emph{harmonic replacement}}.

 One of the key properties that makes harmonic replacement useful is
that the energy functional is strictly convex on small energy
maps.  Namely, Theorem \ref{l:trivmap} below  gives a uniform
lower bound for the gap in energy between a harmonic map   and a
$W^{1,2}$ map with the same boundary values; see  Appendix
\ref{s:appB} for the proof.

\begin{Thm}     \label{l:trivmap}
 There exists a constant $\epsilon_1 > 0$
(depending on $M$) so that if $u$ and $v$ are $W^{1,2}$ maps from
$B_1 \subset \RR^2$ to $M$, $u$ and $v$ agree on $\partial B_1$,
and $v$ is weakly harmonic with energy at most $\epsilon_1$, then
\begin{equation}    \label{e:trivmap}
    \int_{B_1} |\nabla u|^2 -  \int_{B_1} |\nabla v|^2 \geq
    \frac{1}{2} \,
        \int_{B_1} \left| \nabla v - \nabla u \right|^2  \, .
\end{equation}
\end{Thm}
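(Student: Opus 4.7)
Isometrically embed $M\subset\RR^N$ so that the extrinsic harmonic map equation for $v$ reads $\Delta v=A(v)(\nabla v,\nabla v)$, with $A$ the second fundamental form of $M$ (the right-hand side is normal to $M$ at $v(x)$). Set $w=u-v$, so that $w\in W_0^{1,2}(B_1,\RR^N)$, and use the algebraic identity
\begin{equation*}
\int_{B_1}|\nabla u|^2-\int_{B_1}|\nabla v|^2 = \int_{B_1}|\nabla w|^2 + 2\int_{B_1}\nabla v\cdot\nabla w \, .
\end{equation*}
This reduces \eqref{e:trivmap} to showing that the cross-term satisfies $|\int_{B_1}\nabla v\cdot\nabla w|\leq \tfrac14\int_{B_1}|\nabla w|^2$ provided $\epsilon_1$ is small enough.

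First I would integrate by parts (using $w\big|_{\partial B_1}=0$) and invoke the harmonic map equation to rewrite
\begin{equation*}
\int_{B_1}\nabla v\cdot\nabla w = -\int_{B_1}\Delta v\cdot w = -\int_{B_1}A(v)(\nabla v,\nabla v)\cdot w \, .
\end{equation*}
The geometric input is that both $u(x)$ and $v(x)$ lie in $M$; decomposing $w(x)$ into its $T_{v(x)}M$-tangential and normal parts, only the normal component pairs non-trivially with $A(v)(\nabla v,\nabla v)$, and representing $M$ locally as a second-order graph over $T_{v(x)}M$ gives $|w^N(x)|\leq C\,|w(x)|^2$ with $C$ depending only on the extrinsic geometry of $M$. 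Consequently,
\begin{equation*}
\Bigl|\int_{B_1}\nabla v\cdot\nabla w\Bigr| \;\leq\; C\int_{B_1}|\nabla v|^2\,|w|^2 \, .
\end{equation*}

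Next I would absorb the right-hand side using Cauchy-Schwarz together with two ingredients: (i) the two-dimensional Sobolev-Poincar\'e inequality for $w\in W_0^{1,2}(B_1)$, giving $\|w\|_{L^4(B_1)}^2\leq C\int_{B_1}|\nabla w|^2$; and (ii) the Sacks-Uhlenbeck $\epsilon$-regularity theorem (lemma $3.4$ of \cite{SaU}), which upgrades $\Energy(v)\leq\epsilon_1<\epsilon_{SU}$ to a pointwise, hence $L^4$, bound $\|\nabla v\|_{L^4(B_1)}^2\leq C\,\Energy(v)$. Combining these,
\begin{equation*}
C\int_{B_1}|\nabla v|^2|w|^2 \;\leq\; C\,\|\nabla v\|_{L^4}^2\,\|w\|_{L^4}^2 \;\leq\; C'\,\Energy(v)\int_{B_1}|\nabla w|^2 \;\leq\; C'\,\epsilon_1\int_{B_1}|\nabla w|^2 \, ,
\end{equation*}
and choosing $\epsilon_1$ so that $C'\epsilon_1\leq\tfrac14$ yields \eqref{e:trivmap}.

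The main obstacle is the uniform $L^4$ control of $|\nabla v|$ on \emph{all} of $B_1$. The Sacks-Uhlenbeck $\epsilon$-regularity is interior, while $v$ a priori only has a $W^{1,2}$ trace on $\partial B_1$ (inherited from $u$), so boundary regularity for weakly harmonic maps with potentially irregular boundary data is the genuine difficulty. To circumvent this I would exhaust $B_1$ by slightly smaller balls $B_{1-\delta}$, apply the estimate above on each, and exploit that the trace of $w$ vanishes on $\partial B_1$ through a Hardy-type weighted inequality, so that the boundary annulus contributes negligibly as $\delta\to 0$; alternatively, one can use the Wente/Hardy-space improvement arising from the Jacobian structure of $A(v)(\nabla v,\nabla v)$ to gain integrability of $\nabla v$ up to the boundary independently of the regularity of the boundary data. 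Either route supplies a constant depending only on $M$ and closes the argument.
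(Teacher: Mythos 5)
Your first half coincides with the paper's proof: the same Stokes identity, the same use of the harmonic map equation $\Delta v\perp T_{v}M$ with $|\Delta v|\leq \sup_M|A|\,|\nabla v|^2$, and the same geometric fact $|(u-v)^N|\leq C|u-v|^2$ (this is \eqr{e:almosttan}), reducing everything to the weighted estimate $\int_{B_1}|u-v|^2|\nabla v|^2\leq C\,\epsilon_1\int_{B_1}|\nabla u-\nabla v|^2$. The genuine gap is that this estimate is exactly the hard point, and your proposal does not prove it. The route through a global bound $\|\nabla v\|_{L^4(B_1)}^2\leq C\,\Energy(v)$ is unavailable, as you note, because the boundary data is only a $W^{1,2}$ trace; and the repair you sketch (exhaust by $B_{1-\delta}$, treat the annulus via the vanishing trace, let $\delta\to 0$) does not assemble into a bound with a constant independent of $w=u-v$: the scale-invariant interior estimate only gives $|\nabla v(x)|^2\leq C\epsilon_1\,\dist(x,\partial B_1)^{-2}$, so $\|\nabla v\|^4_{L^4(B_{1-\delta})}$ can grow like $\delta^{-3}$, while the annulus contribution is small only for each fixed $w$ (by dominated convergence), not uniformly; the two pieces therefore cannot be balanced to yield $C\epsilon_1\int|\nabla w|^2$.

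For comparison, the paper proves precisely the missing weighted inequality as Proposition \ref{p:harmhardy}: using H\'elein's Coulomb frame one writes $|\nabla v|^2$ comparably to $|\zeta|^2$ with $\zeta$ holomorphic, and the Wente lemma (Proposition \ref{c:hardy}) gives $\int h^2|\zeta|^2\leq 8\,(\int|\nabla h|^2)(\int|\zeta|^2)$ for $h\in W_0^{1,2}(B_1)$, valid up to the boundary with no regularity of the boundary data — this is the content of your second alternative, but it is a substantial step, not a remark, and it is where the compensated-compactness structure enters (ordinary Sobolev embedding fails here, as the paper emphasizes). On the other hand, your own ingredients can be reassembled into a correct and more elementary argument that avoids Wente: by H\'elein's interior regularity $v$ is smooth inside $B_1$, the Sacks--Uhlenbeck small-energy estimate applied on $B_{d(x)}(x)$ with $d(x)=\dist(x,\partial B_1)$ gives the pointwise bound $|\nabla v(x)|^2\leq C\,\epsilon_1\,d(x)^{-2}$, and then Hardy's inequality $\int_{B_1}|w|^2 d^{-2}\leq C\int_{B_1}|\nabla w|^2$ for $w\in W_0^{1,2}(B_1)$ (the ball being convex) yields $\int|w|^2|\nabla v|^2\leq C\epsilon_1\int|\nabla w|^2$ directly, with no splitting into interior ball and annulus. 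Had you carried out this pointwise-weight version rather than the $L^4$/exhaustion version, the proof would close; as written, the key inequality is left unestablished.
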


An immediate corollary of Theorem \ref{l:trivmap} is uniqueness of
solutions to the Dirichlet problem for small energy maps (and also
that any such harmonic map minimizes energy).

\begin{Cor} \label{c:trivmap1}
Let  $\epsilon_1 > 0$  be as in Theorem \ref{l:trivmap}.  If $u_1$
and $u_2$ are $W^{1,2}$ weakly harmonic maps from $B_1 \subset
\RR^2$ to $M$, both with energy at most $\epsilon_1$, and they
agree on $\partial B_1$,
 then $u_1=u_2$.
\end{Cor}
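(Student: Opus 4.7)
The plan is to apply Theorem \ref{l:trivmap} twice, once for each ordering of the pair $(u_1,u_2)$, and add the two resulting inequalities. Since each $u_i$ is weakly harmonic with energy at most $\epsilon_1$, it may legitimately play the role of the harmonic map $v$ in the theorem; and since $u_1$ and $u_2$ share the same boundary trace on $\partial B_1$, each may also play the role of the competitor $u$. So the symmetric application is permitted.

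Carrying this out, the first application (with $v=u_1$ and $u=u_2$) gives
\begin{equation}
    \int_{B_1} |\nabla u_2|^2 - \int_{B_1} |\nabla u_1|^2 \geq \tfrac{1}{2} \int_{B_1} |\nabla u_1 - \nabla u_2|^2,
\end{equation}
while the second (with the roles of $u_1$ and $u_2$ exchanged) gives the same inequality with the subscripts $1$ and $2$ swapped. Adding the two, the energy terms on the left cancel, leaving $\int_{B_1}|\nabla u_1 - \nabla u_2|^2 \leq 0$. Therefore $\nabla(u_1-u_2)=0$ almost everywhere, so $u_1-u_2$ is constant as an $\RR^N$-valued map, and the common boundary trace forces this constant to be zero. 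Hence $u_1 = u_2$.

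There is essentially no real obstacle: the corollary is just the symmetric exploitation of the quantitative convexity recorded in Theorem \ref{l:trivmap}. The only points to verify are that the difference $u_1-u_2$ makes sense as an $\RR^N$-valued Sobolev map (which it does, since $M$ is embedded in some $\RR^N$, so $u_1,u_2 \in W^{1,2}(B_1,\RR^N)$), and that the phrase \emph{agree on $\partial B_1$} is read in the trace sense so that $u_1-u_2$ has vanishing $W^{1/2,2}$ trace. As a bonus, applying Theorem \ref{l:trivmap} once with $v$ harmonic and $u$ an arbitrary $W^{1,2}$ competitor sharing the same boundary values immediately gives $\Energy(u) \geq \Energy(v)$, which is the parenthetical claim that any such small-energy harmonic map minimizes energy among maps with the same boundary data.
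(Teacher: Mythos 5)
Your proof is correct and is exactly the argument the paper has in mind when it calls the corollary "immediate": apply the convexity estimate of Theorem \ref{l:trivmap} with the two maps in both orders, add the inequalities so the energy terms cancel, and conclude $\nabla(u_1-u_2)=0$ together with the vanishing boundary trace. Nothing further is needed.
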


\subsection{Continuity of harmonic replacement on $C^0
(\overline{B_1}) \cap W^{1,2}(B_1)$}

   The second consequence of Theorem \ref{l:trivmap} is that
harmonic replacement is continuous as a map from $C^0
(\overline{B_1}) \cap W^{1,2}(B_1)$ to itself if we restrict to
small energy maps.  (The norm on $C^0 (\overline{B_1}) \cap
W^{1,2}(B_1)$ is the sum of the sup norm and the $W^{1,2}$ norm.)

\begin{Cor} \label{c:trivmap2}
Let   $\epsilon_1 > 0$  be as in Theorem \ref{l:trivmap} and  set
\begin{equation}
    \cM = \{ u \in C^0(\overline{B_1}, M) \cap W^{1,2}(B_1, M) \, | \,
    \Energy (u)  \leq \epsilon_1 \}  \, .
\end{equation}
Given $u  \in \cM$, there is a unique energy minimizing map $w$
equal to $u$ on $\partial B_1$ and $w$ is in $\cM$.
 Furthermore,   there exists $C$ depending on $M$ so that if
$u_1 , u_2 \in \cM$ with corresponding energy minimizing maps $w_1
, w_2$, and we set $\Energy =  \Energy (u_1) + \Energy (u_2)$,
then
\begin{equation}    \label{e:trivmap2}
    \left| \Energy (w_1) - \Energy (w_2) \right| \leq C \,
    ||u_1 - u_2||_{C^0(\overline{B_1})} \,   \Energy   +  C \,
    ||\nabla u_1 - \nabla u_2||_{L^2(B_1)} \, \Energy^{1/2}   \, .
\end{equation}
Finally, the map from $u$ to $w$ is
 continuous as a map from $C^0
(\overline{B_1}) \cap W^{1,2}(B_1)$ to itself.
\end{Cor}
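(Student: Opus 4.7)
The proof splits into four steps: (i) existence and $\cM$-membership of the energy minimizer $w$, (ii) uniqueness of $w$, (iii) the quantitative energy bound \eqr{e:trivmap2}, and (iv) $C^0\cap W^{1,2}$ continuity of the map $u\mapsto w$. For (i), I would apply the direct method to
$$\inf\bigl\{\Energy(v): v\in W^{1,2}(B_1,M),\ v-u\in W_0^{1,2}(B_1)\bigr\}.$$
A minimizing sequence is $W^{1,2}$-bounded, so has a weak subsequential limit $w$; the constraint $w-u\in W_0^{1,2}$ passes by trace continuity, and $w(x)\in M$ a.e.\ by extracting an a.e.\ convergent subsequence. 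Lower semicontinuity of $\Energy$ makes $w$ a minimizer, and testing against $u$ gives $\Energy(w)\le \Energy(u)\le \epsilon_1$. Interior $C^{1,\alpha}$ estimates for small-energy harmonic maps (lemma $3.4$ of \cite{SaU}) together with standard boundary regularity for continuous boundary data place $w$ in $\cM$. Step (ii) is immediate: $w$ is weakly harmonic with energy $\le \epsilon_1$, so any two such minimizers coincide by Corollary \ref{c:trivmap1}.

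The heart is (iii). To compare $\Energy(w_2)$ to $\Energy(w_1)$, I would build a competitor for $w_2$ by deforming $w_1$ near $\partial B_1$ through the ambient space $\RR^N$. Let $h\colon B_1\to\RR^N$ be the componentwise harmonic extension of $(u_2-u_1)|_{\partial B_1}$; by the maximum principle and Dirichlet-minimality of harmonic extensions,
$$\|h\|_{C^0(\overline{B_1})}\le\|u_1-u_2\|_{C^0},\qquad \|\nabla h\|_{L^2(B_1)}\le\|\nabla(u_1-u_2)\|_{L^2}.$$
If $\|u_1-u_2\|_{C^0}$ exceeds the reach of $M$ in $\RR^N$, \eqr{e:trivmap2} is trivial with a larger $C$ since $\Energy(w_i)\le\epsilon_1$; otherwise the map $v:=\pi_M\circ(w_1+h)$ is well defined, satisfies $v|_{\partial B_1}=\pi_M(u_2)=u_2$, and is a competitor for $w_2$. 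Using the Taylor expansion $\pi_M(p+h)=p+P_{T_pM}h+O(|h|^2)$ at $p\in M$, so $d\pi_M(w_1+h)=P_{T_{w_1}M}+O(|h|)$, and the identity $P_{T_{w_1}M}\,dw_1=dw_1$, one obtains the pointwise bound
$$|dv|^2\le |dw_1|^2+2\langle dw_1,dh\rangle+|dh|^2+C\|h\|_{C^0}\bigl(|dw_1|^2+|dh|^2\bigr).$$
Integrating, bounding the cross term by Cauchy--Schwarz, and using $\Energy(w_1)\le\Energy$ and the $L^2$-bound on $\nabla h$ yields
$$\Energy(w_2)-\Energy(w_1)\le\Energy(v)-\Energy(w_1)\le C\|u_1-u_2\|_{C^0}\,\Energy+C\,\Energy^{1/2}\|\nabla(u_1-u_2)\|_{L^2},$$
and swapping the roles of $u_1$ and $u_2$ produces \eqr{e:trivmap2}.

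For (iv), suppose $u_n\to u$ in $C^0\cap W^{1,2}$; estimate (iii) gives $\Energy(w_n)\to\Energy(w)$. Running the construction of (iii) with $w_n$ in place of $w_1$ and $u$ in place of $u_2$, set $\tilde w_n:=\pi_M(w_n+h_n)$ where $h_n$ is the ambient harmonic extension of $(u-u_n)|_{\partial B_1}$. Then $\tilde w_n=u=w$ on $\partial B_1$, and the estimate above shows $\Energy(\tilde w_n)-\Energy(w_n)\to 0$, hence $\Energy(\tilde w_n)\to\Energy(w)$. Applying Theorem \ref{l:trivmap} to the pair $(\tilde w_n,w)$ gives $\tilde w_n\to w$ in $W^{1,2}$, and the Taylor expansion of $\pi_M$ yields $\|\nabla(\tilde w_n-w_n)\|_{L^2}\to 0$, so $w_n\to w$ in $W^{1,2}$. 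For $C^0$ convergence, interior $C^{1,\alpha}$ estimates for small-energy harmonic maps plus boundary regularity with continuous boundary data give equicontinuity of $\{w_n\}$ up to $\partial B_1$; together with the $W^{1,2}$ convergence and Arzel\`a--Ascoli, $w_n\to w$ in $C^0(\overline{B_1})$.

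The main obstacle is step (iii): producing a competitor for $w_2$ whose extra energy is simultaneously linear in $\|u_1-u_2\|_{C^0}$ (times $\Energy$) and in $\|\nabla(u_1-u_2)\|_{L^2}$ (times $\Energy^{1/2}$). The ambient-harmonic-extension-plus-projection construction succeeds because harmonic extension in dimension two simultaneously respects the $C^0$ and Dirichlet norms of the boundary data, and because $\pi_M$ is tangent to the identity along $M$ with quadratic error so that the non-manifold correction enters only at lower order. A secondary technical point is $C^0$ convergence up to $\partial B_1$, which requires boundary regularity for small-energy harmonic maps beyond what Theorem \ref{l:trivmap} and Corollary \ref{c:trivmap1} supply directly.
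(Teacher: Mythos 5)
Your proposal is correct and follows essentially the same route as the paper: a competitor built by adding the (boundary) difference of the data to the known minimizer and projecting back to $M$ via nearest point projection, combined with energy minimality and the Lipschitz bound on $d\Pi$ to get \eqr{e:trivmap2}, then Theorem \ref{l:trivmap} applied to that competitor for the $W^{1,2}$ continuity, and equicontinuity plus Arzel\`a--Ascoli for the $C^0$ continuity (your use of the harmonic extension of $(u_2-u_1)|_{\partial B_1}$ rather than $u_1-u_2$ itself is only a cosmetic variation). The one point to be precise about is that the ``standard boundary regularity'' you invoke for continuity of $w$ up to $\partial B_1$ with merely continuous boundary data, and the boundary equicontinuity of the family $\{w_n\}$, are exactly the results of Qing \cite{Q} (the Schoen--Uhlenbeck boundary theory \cite{SU2} needs $C^{2,\alpha}$ data), which is what the paper cites.
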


In the proof, we will use that since $M$ is smooth, compact and
embedded, there exists a $\delta
> 0$ so that for each $x$ in the $\delta$-tubular neighborhood $M_{\delta}$
of $M$ in $\RR^N$, there is a unique closest point $\Pi (x) \in M$
and  so the map $x \to \Pi(x)$ is smooth.
  $\Pi$ is called
{\emph{nearest point projection}}.  Furthermore, for any $x \in
M$, we have $|d\Pi_x (V)| \leq |V|$.  Therefore, there is a
constant $C_{\Pi}$ depending on $M$ so that  for any $x \in
M_{\delta}$, we have $|d\Pi_x (V)| \leq (1+ C_{\Pi}|x-\Pi(x)|)\,
|V|$.  In particular, we can choose $\hat{\delta} \in (0, \delta)$
so that $|d\Pi_x (V)|^2 \leq 2 \, |V|^2$ for any $x \in
M_{\hat{\delta}}$ and $V \in \RR^N$.

\begin{proof}
(of Corollary \ref{c:trivmap2}.)
 The existence of an energy
minimizing map $w \in W^{1,2}(B_1)$ was proven by Morrey in
\cite{Mo1}; by Corollary \ref{c:trivmap1}, $w$ is unique. The
continuity of $w$ on $\overline{B_1}$ is the main theorem of
\cite{Q}.{\footnote{Continuity also essentially follows from the
boundary regularity of Schoen and Uhlenbeck, \cite{SU2}, except
that \cite{SU2} assumes $C^{2,\alpha}$ regularity of the boundary
data.}}  It follows that $w \in \cM$.

\vskip1mm \noindent {\bf{Step 1: $\Energy (w)$ is uniformly
continuous}}. We can assume that $||u_1 -
u_2||_{C^0(\overline{B_1})} \leq \hat{\delta}$, since
\eqr{e:trivmap2} holds with $C = 1/ \hat{\delta}$ if $||u_1 -
u_2||_{C^0(\overline{B_1})} \geq \hat{\delta}$.   Define a map
$v_1$ by
\begin{equation}
    v_1 = \Pi \circ \left( w_2 + (u_1 - u_2) \right) \, ,
\end{equation}
so that $v_1$ maps to $M$ and agrees with $u_1$ on $\partial B_1$.
Using that $|d\Pi_x (V)| \leq |V|$ for $x\in M$ and $w_2$ maps to
$M$, we can estimate the energy of $v_1$ by
\begin{equation}    \label{e:ev1}
   \Energy ( v_1 )\leq (1 + C_{\Pi} ||u_1 - u_2||_{C^0(\overline{B_1})})^2 \,  \left[ \Energy (w_2) +
   2 (\Energy (w_2) \, \Energy (u_1 - u_2))^{1/2}
   + \Energy (u_1 - u_2) \right] \, ,
\end{equation}
where $C_{\Pi}$ is the Lipschitz norm of $d\Pi$ in
$M_{\hat{\delta}}$.  Since $v_1$ and $w_1$ agree on $\partial
B_1$, Corollary \ref{c:trivmap1} yields $\Energy (w_1) \leq
\Energy (v_1)$. By symmetry, we can  assume that $\Energy (w_2)
\leq \Energy (w_1)$ so that \eqr{e:ev1} implies \eqr{e:trivmap2}.

\vskip1mm \noindent {\bf{Step 2: The continuity of $u \to w$}}.
 Suppose  that $u,
u_j$ are in $\cM$ with $u_j\to u$ in $C^0(\overline{B_1}) \cap
W^{1,2}(B_1)$ and $w$ and $w_j$ are the corresponding energy
minimizing  maps.

We will first show that $w_j \to w$ in $W^{1,2}(B_1)$.  To do
this, set
\begin{equation}    \label{e:defvj}
    v_j = \Pi \circ \left( w + (u_j - u) \right) \, ,
\end{equation}
so that $v_j$ maps to $M$ and agrees with $u_j$ on $\partial B_1$.
Arguing as in \eqr{e:ev1} and using that $\Energy (w_j) \to
\Energy (w)$ by Step 1, we get that $[\Energy (v_j) - \Energy
(w_j)] \to 0$. Therefore, applying Theorem \ref{l:trivmap} to
     $w_j , \, v_j$ gives that $||w_j -
     v_j||_{W^{1,2}(B_1)} \to 0$.  Since $||u_j - u||_{C^0(\overline{B_1})\cap W^{1,2}(B_1)} \to
     0$  and $\Pi \circ w = w$,
 it follows that $||w -
     v_j||_{W^{1,2}(B_1)} \to 0$.  The triangle inequality  gives
       $||w -
     w_j||_{W^{1,2}(B_1)} \to 0$.

Finally, we will argue by contradiction to see that $w_j \to w$ in
$ C^0(\overline{B_1})$.  Suppose instead that there is a
subsequence (still denoted $w_j$) with
\begin{equation}  \label{e:notc0}
   ||w_j -
w||_{C^0(\overline{B_1})} \geq \epsilon > 0 \, .
\end{equation}
 Using the uniform energy bound for the $w_j$'s together with
interior estimates for energy minimizing maps of \cite{SU1} (and
the Arzela-Ascoli theorem), we can pass to a further subsequence
so that  the $w_j$'s converge uniformly in $C^2$ on any compact
subset $K \subset B_1$. Finally, as remarked in the proof of the
main theorem in \cite{Q}, proposition $1$ and remark $1$ of
\cite{Q} imply that the $w_j$'s are also equicontinuous near
  $\partial B_1$,
  so Arzela-Ascoli gives a further subsequence that   converges
uniformly on $\overline{B_1}$ to a harmonic map $w_{\infty}$ that
agrees with $w$ on the boundary.  However, \eqr{e:notc0} implies
that $||w- w_{\infty}||_{C^0(\overline{B_1})} \geq \epsilon > 0$
which contradicts the uniqueness of small energy harmonic maps.
This completes the proof.
\end{proof}

Corollary \ref{c:trivmap2} gives another proof that the width is
  positive when the homotopy
class is non-trivial or, equivalently, that if
   $\max_t \Energy (\sigma (\cdot
, t ))$ is sufficiently small (depending on $M$), then $\sigma$ is
homotopically trivial.  Namely, since $t \to \sigma (\cdot , t)$
is continuous from $[0,1]$ to $C^0$, we can choose $r>0$ so that
$\sigma (\cdot , t)$ maps the ball $B_r (p) \subset \SS^2$ into a
convex geodesic ball $B^t$ in $M$ for every $t$. If each $\sigma
(\cdot , t)$ has energy less than $\epsilon_1 > 0$ given by
Corollary \ref{c:trivmap2}, then replacing $\sigma (\cdot , t)$
{\emph{outside}} $B_r (p)$ by the energy minimizing map with the
same boundary values gives a homotopic sweepout $\tilde{\sigma}$.
Moreover, the entire image of $\tilde{\sigma} (\cdot , t)$ is
contained in the convex ball $B^t$ by the maximum
principle.{\footnote{This follows  from lemma $4.1.3$ in \cite{Jo}
which requires that $\sigma (\cdot , t)$ is homotopic to a map in
$B^t$ and this follows from the small energy bound and the uniform
lower bound for the energy of any homotopically non-trivial map
from $\SS^2$ given, e.g., in the first line of the proof of
proposition $2$ on page $143$ of \cite{SY}.}}  It follows that
$\tilde{\sigma}$ is homotopically trivial by contracting each
$\tilde{\sigma} (\cdot , t)$ to the point $\sigma (p, t)$ via a
geodesic homotopy.

\subsection{Uniform continuity of energy improvement on $W^{1,2}$}

It will be convenient to introduce some notation for the next
lemma.  Namely, given a $C^0\cap W^{1,2}$ map $u$ from $\SS^2$ to
$M$ and a finite  collection $\cB$ of disjoint closed balls in
$\SS^2$ so the energy of $u$ on $\cup_{\cB} B$ is at most
$\epsilon_1/3$, let $H(u,\cB):\SS^2 \to M$ denote the map that
coincides with $u$ on $\SS^2 \setminus \cup_{\cB} B$ and on
$\cup_{\cB} B$ is equal to the energy minimizing map from
$\cup_{\cB} B$ to $M$ that agrees with $u$ on $ \cup_{\cB}
\partial B$. To keep the notation simple, we will set $H(u, \cB_1
, \cB_2) = H( H(u, \cB_1) , \cB_2)$. Finally, if $\alpha \in
(0,1]$, then $\alpha \cB$ will denote the collection of concentric
balls but whose radii are shrunk by the factor $\alpha$.

In general, $H(u, \cB_1 , \cB_2)$ is not  the same as $H(u, \cB_2
, \cB_1)$.  This matters in the proof of Theorem \ref{p:tilde},
where harmonic replacement on either $\frac{1}{2} \cB_1$ or
$\frac{1}{2} \cB_2$  decreases the energy of $u$ by a definite
amount. The next lemma (see \eqr{e:gap}) shows that the energy
goes down a definite amount regardless of the order that we do the
replacements.  The second inequality bounds the possible decrease
in energy from applying harmonic replacement on $H(u,\cB_1)$ in
terms of the possible decrease from harmonic replacement on $u$.

\begin{Lem}     \label{l:patch}
There is a constant $\kappa > 0$ (depending on $M$) so that if $u:
\SS^2 \to M$ is in $C^0\cap W^{1,2}$ and $\cB_1$, $\cB_2$ are each
finite collections of disjoint closed balls in $\SS^2$ so that the
energy of $u$ on each $\cup_{\cB_i}B$ is at most $\epsilon_1 / 3$,
then
\begin{equation}    \label{e:gap}
    \Energy (u) - \Energy \left[ H (u, \cB_1 , \cB_2) \right]
     \geq \kappa \,
 \left(  \Energy (u) - \Energy \left[   H(u, \frac{1}{2} \, \cB_2)  \right] \right)^2   \, .
\end{equation}
Furthermore,  for any   $\mu \in [1/8,1/2]$, we have
\begin{equation}    \label{e:gap2}
    \frac{\left( \Energy (u) - \Energy \left[ H (u, \cB_1) \right]
    \right)^{1/2}}{\kappa} +
    \Energy (u) - \Energy \left[ H (u, 2 \, \mu \cB_2) \right]
     \geq  \Energy \left[ H(u , \cB_1) \right] - \Energy \left[   H(u, \cB_1 , \mu \, \cB_2)  \right]     \, .
\end{equation}
\end{Lem}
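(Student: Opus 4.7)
The plan for both inequalities follows the same template: use Theorem~\ref{l:trivmap} to convert each energy drop into a $W^{1,2}$-bound on the change produced by harmonic replacement, then compare the two sides by constructing a cutoff-and-project competitor that exploits the interior regularity of a small-energy harmonic map.

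For \eqr{e:gap}, set $f = H(u, \cB_1)$, $g = H(u, \cB_1, \cB_2) = H(f, \cB_2)$, $h = H(u, \frac{1}{2}\cB_2)$, and $a = \Energy(u) - \Energy(f)$, $b = \Energy(f) - \Energy(g)$, $c = \Energy(u) - \Energy(h)$; the target $a + b \geq \kappa c^2$ is equivalent to $c \leq C\sqrt{a+b}$. Applying Theorem~\ref{l:trivmap} on each ball of $\cB_1$ and of $\cB_2$ gives $\|\nabla(u-f)\|_{L^2(\cB_1)}^2 \leq 2a$ and $\|\nabla(f-g)\|_{L^2(\cB_2)}^2 \leq 2b$; the triangle inequality then yields $\|\nabla(u-g)\|_{L^2(\cB_1 \cup \cB_2)}^2 \leq C(a+b)$. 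Since $g$ is weakly harmonic on each $B \in \cB_2$ with energy at most $\epsilon_1/3$, interior estimates (lemma~$3.4$ of \cite{SaU}) give a uniform $C^1$-bound on $g$ restricted to $\frac{1}{2}B$, and Corollary~\ref{c:trivmap1} gives $H(g, \frac{1}{2}B) = g|_{\frac{1}{2}B}$. One now builds a competitor for $h$ on each $\frac{1}{2}B$ as $w = \Pi \circ ((1-\rho) u + \rho g)$ with a cutoff $\rho$ vanishing on $\partial \frac{1}{2}B$ and equal to $1$ on a smaller concentric disk, so that $w = u$ on $\partial \frac{1}{2}B$ and $\Energy(h|_{\frac{1}{2}B}) \leq \Energy(w|_{\frac{1}{2}B})$. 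The standard cutoff expansion bounds $\Energy(w) - \Energy(u)$ by controlled annulus contributions plus the delicate cross term $\int |\nabla \rho|^2 |u-g|^2$; propagating the $W^{1,2}$-bound on $u-g$ into an $L^2$-bound on the cutoff annulus via an interior trace/Poincar\'e inequality anchored where $g$ is smooth cancels the $r^{-2}$ scaling of $|\nabla \rho|^2$, giving $c \leq C\sqrt{a+b}$.

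For \eqr{e:gap2}, set $u' = H(u, \cB_1)$, so Theorem~\ref{l:trivmap} on $\cB_1$ gives $\|\nabla(u-u')\|_{L^2(\cB_1)}^2 \leq 2\,(\Energy(u) - \Energy(u'))$. The same template applies: on each $\mu B \in \mu\cB_2$ construct a competitor for $H(u', \mu B)$ by taking $H(u, 2\mu B)$ in an inner portion of $\mu B$ and interpolating with $u'$ in an annular buffer inside $\mu B$, arranging the competitor to equal $u'$ on $\partial \mu B$. The principal contribution of the resulting energy is that of $H(u, 2\mu B)$, yielding the term $\Energy(u) - \Energy[H(u, 2\mu\cB_2)]$; the cutoff corrections and the substitution of $u'$ for $u$ on $\cB_1$ are controlled by $\|\nabla(u-u')\|_{L^2(\cB_1)}$, which is bounded by $\sqrt{2\,(\Energy(u) - \Energy[H(u,\cB_1)])}$ and hence produces the square-root summand in \eqr{e:gap2}. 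The factor of $2$ in $2\mu\cB_2$ plays the same role as the $\frac{1}{2}$ in \eqr{e:gap}: it provides the annular buffer inside which the competitor harmonic map has interior regularity.

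The chief obstacle in both parts is that we control the differences $u-g$ and $u-u'$ only in $W^{1,2}$ and not in $C^0$, so neither the convex-combination projection $\Pi \circ (\cdots)$ nor the cutoff cross term $\int |\nabla \rho|^2 |u - \cdot|^2$ is automatically well-behaved. The resolution in each case is the buffer annulus: because the cutoff lives strictly inside a ball on which the relevant comparison map is interior-regular in the sense of lemma~$3.4$ of \cite{SaU}, the nearest-point projection is well-defined and an interior trace argument upgrades the $W^{1,2}$-bound on the difference into the annular $L^2$-control required to absorb the cutoff derivative. This is precisely where the factors $\frac{1}{2}$ and $2$ appearing in the statement of the lemma are used.
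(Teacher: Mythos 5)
There are two genuine gaps, and the first is a matter of logical direction. Both \eqr{e:gap} and \eqr{e:gap2} are \emph{upper} bounds on an energy drop: you must show $\Energy (u) - \Energy [H(u,\frac{1}{2}\cB_2)] \leq C\,\delta_{\Energy}^{1/2}$ (with $\delta_{\Energy} = \Energy(u) - \Energy[H(u,\cB_1,\cB_2)]$), and similarly that the drop of $H(u,\cB_1)$ on $\mu\cB_2$ is at most the drop of $u$ on $2\mu\cB_2$ plus an error. But your construction builds a competitor \emph{for} $h = H(u,\frac{1}{2}B)$ (respectively for $H(u,\cB_1,\mu\cB_2)$), which, via minimality, only yields $\Energy(h) \leq \Energy(w)$, i.e.\ a \emph{lower} bound on the drop $c = \Energy(u)-\Energy(h)$; bounding $\Energy(w)-\Energy(u)$ then gives $c \geq -\,$small, which is vacuous. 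To upper-bound $c$ one needs a lower bound on the energy of the minimizer $h$, and the only available mechanism is to compare with the \emph{other} replacement: the paper's inequality \eqr{e:gapa} is proved by building a competitor for $H(H(u,\cB_1),B)$ out of a rescaled $H(u,B_r)$ plus a bridging annulus (and, for \eqr{e:gap2}, a competitor for $H(u,2\mu B)$ out of the replacement of $H(u,\cB_1)$), i.e.\ exactly the reverse of what you propose; the desired bound on $c$ then follows by comparing drops, summing over balls, and using Theorem \ref{l:trivmap} to convert $\|\nabla(u - H(u,\cB_1))\|_{L^2}$ into $\left(\Energy(u)-\Energy[H(u,\cB_1)]\right)^{1/2}$.

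The second gap is the pointwise control needed for the nearest-point projection. You only control $u-g$ in $W^{1,2}$, and in two dimensions this gives no $C^0$ bound (this is precisely the failure of the Sobolev embedding the paper emphasizes); interior $C^1$ estimates for $g$ on $\frac{1}{2}B$ say nothing about the size of $u-g$, and a trace or Poincar\'e inequality gives only $L^2$ control on circles, so $\Pi\circ((1-\rho)u+\rho g)$ need not be defined. The paper's resolution is quite different and cannot be transplanted to your pair $(u,g)$: it first reduces to the case where $\|\nabla(u - H(u,\cB_1))\|_{L^2(\SS^2)}$ is globally small (otherwise \eqr{e:gap} holds trivially, see \eqr{e:tau}); it splits $\cB_2$ into $\cB_{2,+}$, whose shrunken balls lie inside balls of $\cB_1$ and are handled by the easy containment argument (Case 1, resp.\ \eqr{e:realeasy}), and $\cB_{2,-}$; and for $B \in \cB_{2,-}$ it uses the coarea formula to pick a circle $\partial B_r$, $r\in[3R/4,R]$, that is not contained in any ball of $\cB_1$ and hence contains a point where $u = H(u,\cB_1)$, so that Wirtinger plus the fundamental theorem of calculus along that circle (Lemma \ref{l:approx}) upgrades the $L^2$ bound on the tangential derivatives to the pointwise smallness needed for $\Pi$. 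Note that this device compares $u$ with $H(u,\cB_1)$, which agree outside $\cup_{\cB_1}B$; your comparison is with $g = H(u,\cB_1,\cB_2)$, which differs from $u$ everywhere inside $B\in\cB_2$ in general, so no circle of agreement exists and the argument cannot be repaired without restructuring it along the paper's lines. The absence of the $\cB_{2,\pm}$ dichotomy is not cosmetic: for balls of $\cB_2$ swallowed by balls of $\cB_1$ no such good circle exists at all, and a separate argument is required.
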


We will prove Lemma \ref{l:patch} by constructing comparison maps
with the same boundary values and using the minimizing property of
small energy harmonic maps to get upper bounds for the energy.
 The following lemma will be used to construct the comparison maps.

\begin{Lem} \label{l:approx}
There exists $\tau > 0$ (depending on $M$) so that if
$f,g:\partial B_R \to M$ are $C^0 \cap W^{1,2}$ maps that agree at
one point and satisfy
\begin{equation}    \label{e:needthis}
    R \, \int_{\partial B_R}  | f' - g'|^2  \leq \tau^2 \, ,
\end{equation}
then there exists some $\rho \in (0,R/2]$ and a $C^{0} \cap
W^{1,2}$ map $w: B_R \setminus B_{R-\rho} \to M$ so that
\begin{equation}    \label{e:equals}
    w (R-\rho, \theta ) = f (R,\theta)
    {\text{ and }}
    w (R,\theta) = g (R,\theta)  \, ,
\end{equation}
and $ \int_{B_R \setminus
    B_{R-\rho}} |\nabla  {w}|^2 \leq 17 \sqrt{2} \, \left( R \, \int_{\partial B_R}( | f'|^2 +
    |g'|^2)
\right)^{1/2} \,
 \left( R\, \int_{\partial B_R} | f' -
g'|^2 \right)^{1/2}$.
\end{Lem}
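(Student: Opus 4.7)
The plan is to construct $w$ by linearly interpolating the two boundary values in the ambient $\RR^N$ and composing with the nearest-point projection $\Pi$ onto $M$. Concretely, on an annulus $B_R\setminus B_{R-\rho}$ with $\rho\in(0,R/2]$ to be chosen, set
\[
    \tilde w(r,\theta)=\alpha(r)\,g(R,\theta)+(1-\alpha(r))\,f(R,\theta),\qquad \alpha(r)=\frac{r-(R-\rho)}{\rho},
\]
and then $w=\Pi\circ\tilde w$. Write $\mathbf{A}:=R\int_{\partial B_R}(|f'|^2+|g'|^2)$ and $\mathbf{B}:=R\int_{\partial B_R}|f'-g'|^2$ for brevity, so the goal is $\Energy(w)\le 17\sqrt{2}\,\sqrt{\mathbf{A}\mathbf{B}}$. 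The first step is to verify that $\tilde w$ lands in the tubular neighborhood $M_{\hat\delta}$ where $\Pi$ is smooth. Since $f$ and $g$ agree at one point and $\mathbf{B}\le\tau^2$, Cauchy--Schwarz along the circle gives $\|f-g\|_{C^0(\partial B_R)}^2\le 2\pi\,\mathbf{B}\le 2\pi\tau^2$, so the segment from $f(R,\theta)$ to $g(R,\theta)$ stays within $\sqrt{2\pi}\,\tau$ of $M$. Fixing $\tau$ small enough that $\sqrt{2\pi}\,\tau\le\hat\delta$ and $(1+C_\Pi\sqrt{2\pi}\,\tau)^2\le 17/10$ then places $\tilde w$ in $M_{\hat\delta}$ and yields $\Energy(w)\le (17/10)\,\Energy(\tilde w)$.

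Next I would estimate $\Energy(\tilde w)$ in polar coordinates, splitting into radial and angular parts. The radial part, using $\partial_r\tilde w=\rho^{-1}(g-f)$ and $\int_{R-\rho}^R r\,dr\le R\rho$, is at most $(R/\rho)\int_0^{2\pi}|f-g|^2\,d\theta$; applying the sharp Poincar\'e inequality on $[0,2\pi]$ to $h:=f-g$ (which vanishes at a point, hence satisfies Dirichlet conditions) gives $\int_0^{2\pi}|h|^2\,d\theta\le 4\int_0^{2\pi}|\partial_\theta h|^2\,d\theta=4\mathbf{B}$, so the radial part is $\le 4R\mathbf{B}/\rho$. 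For the angular part, convexity of $|\cdot|^2$ yields $|\partial_\theta\tilde w|^2\le\alpha|\partial_\theta g|^2+(1-\alpha)|\partial_\theta f|^2$, and with $\int_{R-\rho}^R r^{-1}\,dr\le 2\rho/R$ (valid for $\rho\le R/2$) the angular part is $\le (2\rho/R)\,\mathbf{A}$. Now balance: when $\mathbf{B}\le\mathbf{A}/8$, the unconstrained minimizer $\rho=R\sqrt{2\mathbf{B}/\mathbf{A}}$ lies in $(0,R/2]$ and gives $\Energy(\tilde w)\le 4\sqrt{2}\,\sqrt{\mathbf{A}\mathbf{B}}$; otherwise take $\rho=R/2$, so $\Energy(\tilde w)\le 8\mathbf{B}+\mathbf{A}$, and the bounds $\mathbf{B}\le 2\mathbf{A}$ (from $|f'-g'|^2\le 2(|f'|^2+|g'|^2)$) together with $\mathbf{A}<8\mathbf{B}$ translate these two summands into $8\sqrt{2}\,\sqrt{\mathbf{A}\mathbf{B}}$ and $2\sqrt{2}\,\sqrt{\mathbf{A}\mathbf{B}}$, so $\Energy(\tilde w)\le 10\sqrt{2}\,\sqrt{\mathbf{A}\mathbf{B}}$ in either case. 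Composing with $\Pi$ then multiplies by at most $17/10$ and delivers the claimed $\Energy(w)\le 17\sqrt{2}\,\sqrt{\mathbf{A}\mathbf{B}}$.

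The main technical difficulty is keeping all constants sharp enough to land on $17\sqrt{2}$: one needs the sharp Poincar\'e constant $4$ on the circle with a single vanishing point (rather than the crude value one gets by pointwise bounding and re-integrating), the convexity bound for the angular term (which avoids a spurious factor of $2$), and the separate treatment of the off-balance case $\mathbf{B}>\mathbf{A}/8$ where the ideal $\rho$ would exceed $R/2$. The smallness of $\tau$ plays a double role, simultaneously confining $\tilde w$ to the tubular neighborhood $M_{\hat\delta}$ and driving the Lipschitz distortion factor $(1+C_\Pi|x-\Pi(x)|)^2$ of the projection down to $17/10$.
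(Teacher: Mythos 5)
Your construction is essentially identical to the paper's: interpolate linearly between $f(R,\theta)$ and $g(R,\theta)$ in $\RR^N$ on an annulus of width $\rho$, bound the radial term via Wirtinger's inequality with constant $4$ (valid since $f-g$ vanishes at a point), choose $\rho\leq R/2$ to balance the radial and angular contributions, and project back to $M$ by nearest-point projection, using \eqr{e:needthis} to keep the interpolant in the tubular neighborhood. The only difference is how the constant $17\sqrt{2}$ is assembled: the paper fixes $\rho^2 = R^2\int |f'-g'|^2/[8\int (|f'|^2+|g'|^2)]$ (automatically $\leq R^2/4$, so no case split) and uses the distortion bound $|d\Pi|^2\leq 2$ on $M_{\hat{\delta}}$, whereas you optimize $\rho$ (with the off-balance case handled separately) and compensate by shrinking $\tau$ to get the sharper projection factor $17/10$ — both yield the stated bound.
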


\begin{proof}
Let $\Pi$ and $\delta > \hat{\delta} > 0$ (depending on $M$) be as
in the proof of Corollary \ref{c:trivmap2} and
  set $\tau =
\hat{\delta}/\sqrt{2\pi}$.  Since $f-g$ vanishes somewhere on
$\partial B_R$, integrating \eqr{e:needthis} gives $\max |f-g|
\leq \hat{\delta}$.

Since the statement is scale-invariant, it suffices to prove the
case $R=1$.  Set $\rho^2 = \int_{\SS^1}
    |f'-g'|^2  / [8\, \int_{\SS^1} (|f'|^2 + |g'|^2)] \leq 1/4$ and define  $\hat{w}:
B_1 \setminus B_{1-\rho} \to \RR^N $  by
\begin{equation}
    \hat{w} (r,\theta) =
f(\theta) + \left( \frac{r + \rho -1}{\rho} \right) \, \left(
g(\theta) - f(\theta) \right) \, .
\end{equation}
Observe that $\hat{w}$ satisfies \eqr{e:equals}.  Furthermore,
 since $f-g$ vanishes somewhere on $\SS^1$, we
can use Wirtinger's inequality $\int_{\SS^1} |f-g|^2 \leq 4\,
\int_{\SS^1} |(f-g)'|^2$ to bound  $\int_{B_1 \setminus
B_{1-\rho}} |\nabla \hat{w}|^2$ by
\begin{align}
    \int_{B_1 \setminus
    B_{1-\rho}} |\nabla \hat{w}|^2 &\leq \int_{1-\rho}^1 \left[
\frac{1}{\rho^{2}} \, \int_{0}^{2\pi}
    |f-g|^2(\theta) \, d\theta + \frac{1}{r^2} \, \int_{0}^{2\pi} (|f'|^2 + |g'|^2)(\theta) \, d\theta \right] \,
    r\, dr  \notag \\
    &\leq \frac{4}{\rho} \, \int_{0}^{2\pi}
    |f'-g'|^2(\theta) \, d\theta  + 2 \rho \, \int_{0}^{2\pi} (|f'|^2 + |g'|^2)(\theta) \, d\theta
    \\ &= 17/\sqrt{2} \, \left( \int_{\SS^1}
    |f'-g'|^2 \, \int_{\SS^1} (|f'|^2 +
    |g'|^2) \right)^{1/2}
      \, . \notag
\end{align}
 Since $|f-g| \leq
\hat{\delta}$, the image of $\hat{w}$ is contained in
$\overline{M_{\hat{\delta}}}$ where we have  $|d\Pi|^2 \leq 2$.
Therefore, if we  set $w = \Pi \circ \hat{w}$, then the energy of
$w$ is at most twice the energy of $\hat{w}$.
\end{proof}

\begin{proof}
(of Lemma \ref{l:patch}.) We will index the balls in $\cB_1$ by
$\alpha$ and use $j$ for the balls in $\cB_2$; i.e.,  let $\cB_1 =
\{ B^1_{\alpha} \}$ and $\cB_2 = \{ B^2_{j} \}$.  The key point is
that, by Corollary \ref{c:trivmap2}, small
 energy harmonic maps minimize energy.  Using this, we get upper bounds for the
 energy of the harmonic replacement by cutting and pasting to construct
 comparison functions with the same boundary values.

 Observe that the total energy of $u$ on the union of the balls in
 $\cB_1 \cup \cB_2$ is at most $2\epsilon_1/3$.  Since harmonic
 replacement on $\cB_1$
 does not change the map outside these balls and
 is energy non-increasing, it follows that the total energy of
 $H(u, \cB_1)$ on $\cB_2$ is at most $2\epsilon_1/3$.

\noindent {\bf{The proof of \eqr{e:gap}}}.   We will divide
$\cB_2$ into two disjoint subsets, $\cB_{2,+}$ and  $\cB_{2,-}$,
and argue separately, depending on which of these accounts for
more of the decrease in energy after harmonic replacement. Namely,
set
\begin{equation}
    \cB_{2,+} = \{
B^2_{j} \in \cB_2 \, | \, \frac{1}{2} \, B^2_{j} \subset
B^1_{\alpha} {\text{ for some }} B^1_{\alpha} \in \cB_1 \}
{\text{ and }} \cB_{2,-} = \cB_2 \setminus \cB_{2,+} \, .
\end{equation}
Since the balls in $\cB_2$ are disjoint, it follows that
\begin{equation}
    \Energy (u) - \Energy (  H(u, \frac{1}{2} \, \cB_2)
    ) = \left( \Energy (u) - \Energy (  H(u, \frac{1}{2} \, \cB_{2, -})
    ) \right) +    \left(
    \Energy (u) - \Energy (   H(u, \frac{1}{2} \, \cB_{2,+})
   ) \right)  \, .
\end{equation}

\noindent {\bf{Case 1}}. Suppose  that $
    \Energy (u) - \Energy \left[   H(u, \frac{1}{2} \, \cB_{2,+})
    \right]   \geq  \left(
    \Energy (u) - \Energy \left[   H(u, \frac{1}{2} \, \cB_{2})
    \right] \right)/2$.  Since the balls in $\frac{1}{2} \cB_{2,+}$ are contained in
balls in $\cB_1$ and  harmonic replacements minimize energy, we
get
\begin{equation}    \label{e:gap+2}
      \Energy (   H (u, \cB_1 , \cB_2)
    ) \leq  \Energy (  H (u, \cB_1)
  )
     \leq     \Energy (   H(u, \frac{1}{2} \, \cB_{2,+})  )   \, ,
\end{equation}
so that $\left(
    \Energy (u) - \Energy \left[   H(u, \frac{1}{2} \, \cB_{2})
    \right] \right)/2 \leq \Energy (u) -  \Energy (   H(u, \frac{1}{2} \, \cB_{2,+})
) \leq  \Energy (u) -  \Energy (   H (u, \cB_1 , \cB_2))$.

\noindent {\bf{Case 2}}.  Suppose now that
\begin{equation}    \label{e:casetwo}
    \Energy (u) - \Energy (  H(u, \frac{1}{2} \, \cB_{2,-})
   )  \geq  \frac{1}{2} \, \left(
    \Energy (u) - \Energy (   H(u, \frac{1}{2} \, \cB_{2})
    ) \right)  \, .
\end{equation}
Let $\tau
> 0$ be given by Lemma \ref{l:approx}.  We can assume that
\begin{equation}    \label{e:tau}
   9 \, \int_{\SS^2} |\nabla H(u,\cB_1) - \nabla u|^2 \leq \tau^2  \, ,
\end{equation}
since otherwise Theorem \ref{l:trivmap} gives \eqr{e:gap} with
$\kappa = \tau^2/\epsilon_1^2$.
      The key   is to
show for $B^2_j \in \cB_{2,-}$ that
 \begin{align}    \label{e:gapa}
    &\int_{B^2_j} |\nabla H (u, \cB_1)|^2  -  \int_{B^2_j} \left| \nabla H (u, \cB_1 , B^2_j)
    \right|^2
     \geq   \int_{\frac{1}{2} B^2_j} |\nabla u|^2  - \int_{\frac{1}{2} B^2_j}
      \left| \nabla   H(u, \frac{1}{2} \, B^2_j)  \right|^2
        \\ &\quad \quad - C \, \left( \int_{B^2_j} |\nabla u|^2 + \left| \nabla
      H(u, \cB_1)\right|^2
      \right)^{1/2} \,
 \left(   \int_{B^2_j} \left|\nabla (u -
      H(u, \cB_1)) \right|^2 \right)^{1/2}   \,
 , \notag
\end{align}
where $C$ is a universal constant.  Namely,  summing \eqr{e:gapa}
over $\cB_{2,-}$ and using the inequality $\left| \sum a_j \, b_j
\right| \leq \left( \sum a_j^2 \right)^{1/2} \, \left( \sum b_j^2
\right)^{1/2}$,  the bound for the energy of $u$ in  $\cB_1 \cup
\cB_2$, and Theorem \ref{l:trivmap} to relate the energy of $u -
      H(u, \cB_1)$ to $\Energy (u) - \Energy (
      H(u, \cB_1))$
gives
\begin{align}
    \Energy ( u)   - \Energy (H(u, \frac{1}{2} \, \cB_{2,-})
     ) &\leq   \Energy ( H (u, \cB_1))   -  \Energy ( H (u, \cB_1 ,
    \cB_{2,-}))
     + C \, \epsilon_1^{1/2} \,
 \left(   \Energy (u )-
      \Energy [H(u, \cB_1)]  \right)^{1/ 2}    \notag  \\
      &\leq  \delta_{\Energy}
     + C \, \epsilon_1^{1/2} \,
 \delta_{\Energy}^{1/ 2} \leq   (C+1) \, \epsilon_1^{1/2} \,
 \delta_{\Energy}^{1/ 2} \, , \label{e:gapa+}
\end{align}
where we have set $\delta_{\Energy} =  \Energy ( u)   - \Energy (
H (u, \cB_1 ,
    \cB_2))$ in the last line and the last inequality used that
    $\delta_{\Energy} \leq 2\epsilon_1/3 < \epsilon_1$.
  Combining \eqr{e:casetwo} with
    \eqr{e:gapa+}   gives
    \eqr{e:gap}.

To complete Case 2, we must prove \eqr{e:gapa}.  After
translation, we can assume that $B_j^2$ is the ball $B_{R}$ of
radius $R$ about $0$ in $\RR^2$. Set $u_1 = H(u,\cB_1)$ and apply
the co-area formula to get $r \in [3R/4 ,R]$ (in fact, a set of
$r$'s of measure at least $R/36$)  with
\begin{align}    \label{e:chr1}
 \int_{\partial B_r} |\nabla u_1 - \nabla u|^2 &\leq \frac{9}{R}
 \,
 \int_{3R/4}^R \left( \int_{\partial B_s} |\nabla u_1 - \nabla u|^2
 \right) \, ds \leq
 \frac{9}{r}  \, \int_{B_R}
|\nabla u_1 - \nabla u|^2   \, , \\
\label{e:chr2}
 \int_{\partial B_r}  ( |\nabla u_1|^2 + |\nabla u|^2 )  &\leq
 \frac{9}{R}   \int_{3R/4}^R \left(  \int_{\partial B_s}   |\nabla u_1|^2 + |\nabla
 u|^2
\right) \, ds \leq
 \frac{9}{r}  \int_{B_R}
  (|\nabla u_1|^2 + |\nabla u|^2 )  \, .
\end{align}
Since $B_j^2 \in \cB_{2,-}$ and $r >R/2$, the circle $\partial
B_r$ is not contained in any of the balls in $\cB_1$.  It follows
that $\partial B_r$ contains at least one point outside
$\cup_{\cB_1}B$ and, thus,
 there is a point
in $\partial B_r$ where $u = u_1$.
  This and \eqr{e:tau} allow us to apply
  Lemma \ref{l:approx} to get $\rho
\in (0,r/2]$ and a map $w : B_{r} \setminus B_{r-\rho} \to M$ with
$w(r,\theta) = u_1(r,\theta)$,  $w(r-\rho,\theta) = u(r,\theta)$,
 and
\begin{equation}  \label{e:compa}
    \int_{B_r\setminus B_{r-\rho}} |\nabla w|^2 \leq C \, \left( \int_{B^2_j} |\nabla u|^2 + \left| \nabla
      H(u, \cB_1)\right|^2
      \right)^{1/2} \,
 \left(   \int_{B^2_j} \left|\nabla (u -
      H(u, \cB_1)) \right|^2 \right)^{1/2}
      \, .
 \end{equation}
Observe that the map $x \to H(u,B_r)( r\,x/(r-\rho))$ maps
$B_{r-\rho}$ to $M$ and agrees with $w$ on $\partial B_{r-\rho}$.
Therefore, the map from $B_R$ to $M$ which is equal to $u_1$ on
$B_{R} \setminus B_{r}$, is equal to $w$ on $B_{r} \setminus
B_{r-\rho}$, and is equal to $H(u,B_r)( r\, \cdot /(r-\rho))$ on
$B_{r-\rho}$ gives an upper bound for the energy of $H(u_1,B_R)$
\begin{equation}    \label{e:onemore}
    \int_{B_R}  |\nabla
     H(u_1, B_R)|^2 \leq \int_{B_R\setminus B_r} |\nabla u_1|^2 +
     \int_{B_r\setminus B_{r-\rho}} |\nabla w|^2 +
      \int_{B_r}  |\nabla H(u,B_r)|^2
      \, .
\end{equation}
Using \eqr{e:compa} and that $  \left| |\nabla u_1|^2-|\nabla
u|^2\right| \leq
  (|\nabla u| + |\nabla  u_1|)\,  |\nabla (u-u_1)|$, we get
\begin{align}
     &\int_{B_R} |\nabla u_1|^2 -  \int_{B_R}   |\nabla
     H(u_1 , B_R)|^2 \geq \int_{B_r} |\nabla u_1|^2 -  \int_{B_r} |\nabla H(u,B_r)|^2 - \int_{B_r\setminus
     B_{r-\rho}}
      |\nabla
     w|^2  \notag \\
     &\quad   \geq \int_{B_r} |\nabla u|^2 -  \int_{B_r}  |\nabla
     H(u,B_r)|^2
      - C \, \left( \int_{B_r} |\nabla u|^2 + \left| \nabla
      u_1 \right|^2
      \right)^{1/2} \,
 \left(   \int_{B_r} \left|\nabla (u -
      u_1) \right|^2 \right)^{1/2}
    \, . \notag
\end{align}
Since  $ \int_{B_{R/2}} |\nabla
     H(u,B_{R/2})|^2 \leq
\int_{B_{R/2} \setminus B_r} |\nabla u|^2 + \int_{B_r} |\nabla
     H(u,B_r)|^2$, we get
 \eqr{e:gapa}.

\noindent {\bf{The proof of \eqr{e:gap2}}}. We will argue
similarly with a few small modifications that we will describe.
This time, let $\cB_{2,+} \subset \cB_2$ be the balls $B^2_{j}$
with $\mu B^2_{j}$ contained in some $B^1_{\alpha} \in \cB_1 $. It
follows that harmonic replacement on $\mu \cB_{2,+}$ does not
change $H(u,\cB_1)$ and, thus, \begin{equation} \label{e:realeasy}
    \Energy \left[ H(u , \cB_1) \right] =
\Energy \left[ H(u, \cB_1 , \mu \cB_{2,+})  \right] \, .
\end{equation}
Again, we can assume that \eqr{e:tau}  holds. Suppose now that
$B^2_j \in \cB_{2,-}$. Arguing
 as in the proof of \eqr{e:gapa} (switching the roles of $u$ and $H(u, \cB_1)$), we get
\begin{align}    \label{e:gapac2}
    &\int_{B^2_j} |\nabla u|^2  -  \int_{B^2_j} \left| \nabla H (u,  2 \mu B^2_j)
    \right|^2
     \geq   \int_{\mu B^2_j} |\nabla H(u,\cB_1)|^2  - \int_{\mu B^2_j}
      \left| \nabla   H(u, \cB_1 , \mu \, \cB^2_j)  \right|^2
        \\ &\quad \quad - C \, \left( \int_{B^2_j} |\nabla u|^2 + \left| \nabla
      H(u, \cB_1)\right|^2
      \right)^{1/2} \,
 \left(   \int_{B^2_j} \left|\nabla (u -
      H(u, \cB_1)) \right|^2 \right)^{1/2}   \,
 . \notag
\end{align}
Summing this over $\cB_{2,-}$ and arguing as for \eqr{e:gapa+}
gives
\begin{align}    \label{e:gapa+c}
    &\int |\nabla u|^2  -  \int \left| \nabla H (u, 2 \mu \cB_2)
    \right|^2
     \geq  \int  |\nabla H(u,\cB_1)|^2  - \int
      \left| \nabla   H(u, \cB_1 , \mu \, \cB_{2,-})  \right|^2
       \\ &\quad \quad - C \, \epsilon_1^{1/2} \,
 \left(   \Energy (u )-
      \Energy [H(u, \cB_1)]  \right)^{1/2}   \,
 . \notag
\end{align}
Combining \eqr{e:realeasy} and \eqr{e:gapa+c} completes the proof.
\end{proof}

\subsection{Constructing the map from $\tilde{\gamma}$ to $\gamma$}

We will construct ${\gamma}(\cdot , t)$ from $\tilde{\gamma}(\cdot
, t)$ by  harmonic replacement on a  family of balls in $\SS^2$
  varying continuously in $t$. The balls will be
chosen in Lemma \ref{l:goodballs} below. Throughout this
subsection, $\epsilon_1
> 0$ will be the small energy constant (depending on $M$) given by
Theorem \ref{l:trivmap}.

Given $\sigma \in \Omega$ and $\epsilon \in (0, \epsilon_1]$,
define
  the maximal improvement  from harmonic replacement
on families of balls with  energy at most $\epsilon$ by
\begin{equation}
    e_{\sigma,\epsilon}(t) \, = \, \sup_{\cB} \, \{
    \Energy (\sigma (\cdot , t)) - \Energy (H(\sigma (\cdot , t), \frac{1}{2} \cB))
    \} \, ,
\end{equation}
where the supremum is over all finite collections $\cB$ of
disjoint closed balls
    where the total  energy of $\sigma (\cdot , t)$ on $\cB$ is at most
    $\epsilon$. Observe that $e_{\sigma,\epsilon}(t)$ is nonnegative, monotone non-decreasing in $\epsilon$,
    and is positive  if $\sigma (\cdot ,
    t)$ is not harmonic.

\begin{Lem}     \label{l:sf}
If $\sigma (\cdot , t)$ is not harmonic and $\epsilon \in
(0,\epsilon_1]$, then
 there is an open interval $I^t$ containing $t$ so that
 $e_{\sigma,\epsilon/2}(s)  \leq 2 \, e_{\sigma,\epsilon}(t)$ for all
 $s$ in the double interval
 $2 I^t$.
\end{Lem}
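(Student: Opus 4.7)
The plan is to view $e_{\sigma,\epsilon}$ as an ``upper semi-continuous'' quantity and exploit three facts: (i) $s \mapsto \sigma(\cdot,s)$ is continuous from $[0,1]$ into $C^0 \cap W^{1,2}(\SS^2,M)$, so small time variations produce small perturbations of the map in both norms; (ii) Corollary \ref{c:trivmap2} says that the harmonic replacement map, restricted to small-energy maps, is continuous in $C^0 \cap W^{1,2}$ and in fact the energy of the replacement varies with an explicit modulus (see \eqr{e:trivmap2}); (iii) since $\sigma(\cdot,t)$ is not harmonic, $e_{\sigma,\epsilon}(t) > 0$ strictly, giving room to absorb errors.

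Concretely, fix $t$ and suppose $\sigma(\cdot,t)$ is not harmonic, so $a := e_{\sigma,\epsilon}(t) > 0$. By continuity, pick an open interval $I^t \ni t$ such that for every $s \in 2I^t$
\begin{equation}
    \| \sigma(\cdot,s) - \sigma(\cdot,t) \|_{C^0(\SS^2)}
    + \| \nabla\sigma(\cdot,s) - \nabla\sigma(\cdot,t) \|_{L^2(\SS^2)} < \eta,
\end{equation}
where $\eta > 0$ is a small constant, depending on $a$, $\epsilon_1$, and $\sup_s\Energy(\sigma(\cdot,s))$, to be chosen at the end. Now fix any $s \in 2I^t$ and any admissible collection $\cB$ of disjoint closed balls with $\int_{\cup_{\cB} B}|\nabla\sigma(\cdot,s)|^2 \le \epsilon/2$. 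A Cauchy--Schwarz estimate shows the energy on $\cup_{\cB}B$ changes by at most a constant times $\eta$ when $s$ is replaced by $t$, so for $\eta$ small enough $\int_{\cup_\cB B}|\nabla\sigma(\cdot,t)|^2 \le \epsilon$; thus $\cB$ is also admissible at time $t$ (and a fortiori $\tfrac12\cB$ has energy $\le \epsilon \le \epsilon_1$ on each ball, so harmonic replacement is well-defined and unique on it).

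The heart of the argument is to compare
\[
D(s,\cB) := \Energy(\sigma(\cdot,s)) - \Energy\bigl(H(\sigma(\cdot,s),\tfrac12\cB)\bigr),
\qquad
D(t,\cB) := \Energy(\sigma(\cdot,t)) - \Energy\bigl(H(\sigma(\cdot,t),\tfrac12\cB)\bigr).
\]
Since both replacements leave the maps unchanged outside $\tfrac12\cB$, each $D(\cdot,\cB)$ equals $\int_{\tfrac12\cB}|\nabla\sigma|^2 - \int_{\tfrac12\cB}|\nabla H(\sigma,\tfrac12\cB)|^2$. The first integral varies continuously in $s$ by the $W^{1,2}$-continuity of $\sigma$. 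For the second integral, apply the estimate \eqr{e:trivmap2} of Corollary \ref{c:trivmap2} on each ball in $\tfrac12\cB$ separately (which is legitimate because the energy on each ball is at most $\epsilon_1$) and sum, using Cauchy--Schwarz on the sum; since the total energy on $\tfrac12\cB$ is uniformly bounded by $2\epsilon_1$, the result is
\begin{equation}
    |D(s,\cB) - D(t,\cB)| \le \Phi(\eta),
\end{equation}
for a function $\Phi$ with $\Phi(\eta) \to 0$ as $\eta \to 0$ that is independent of $\cB$. Since $D(t,\cB) \le e_{\sigma,\epsilon}(t) = a$, we conclude $D(s,\cB) \le a + \Phi(\eta)$. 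Taking the supremum over all admissible $\cB$ at time $s$ gives $e_{\sigma,\epsilon/2}(s) \le a + \Phi(\eta)$; choosing $\eta$ so that $\Phi(\eta) \le a$ yields $e_{\sigma,\epsilon/2}(s) \le 2a = 2\,e_{\sigma,\epsilon}(t)$, as desired.

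The main subtlety to check is step (ii): that the single-ball estimate \eqr{e:trivmap2} of Corollary \ref{c:trivmap2} sums correctly over a collection of disjoint balls with a $\cB$-independent modulus. This is essentially routine, since the right-hand side of \eqr{e:trivmap2} is controlled by a $C^0$ norm times a total energy and an $L^2$ norm of gradient differences times a square root of total energy, and both ingredients behave additively (or via Cauchy--Schwarz) when summed over disjoint balls with uniformly bounded total energy. The critical use of non-harmonicity at $t$ is only at the very end, to ensure $a > 0$ and so absorb the error term into $a$ by shrinking $I^t$.
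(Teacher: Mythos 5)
Your proposal is correct and follows essentially the same route as the paper: both use the continuity of $s \mapsto \sigma(\cdot,s)$ in $C^0 \cap W^{1,2}$ together with the energy-continuity estimate \eqr{e:trivmap2} of Corollary \ref{c:trivmap2} to compare the replacement gains at times $s$ and $t$, transfer admissibility of a collection from energy $\le \epsilon/2$ at $s$ to $\le \epsilon$ at $t$, and absorb the resulting error using $e_{\sigma,\epsilon}(t) > 0$. Your explicit summation of \eqr{e:trivmap2} over the disjoint balls to get a $\cB$-independent modulus is exactly what the paper does implicitly.
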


\begin{proof}
By \eqr{e:trivmap2} in Corollary \ref{c:trivmap2}, there exists
$\delta_1 > 0$ (depending on $t$) so that if
\begin{equation}    \label{e:it1}
    || \sigma (\cdot , t) - \sigma ( \cdot , s) ||_{C^0 \cap W^{1,2}} <
    \delta_1
\end{equation}
and $\cB$ is a finite collection of disjoint closed balls where
both $\sigma (\cdot , t)$ and $\sigma ( \cdot , s)$ have energy at
most $\epsilon_1$, then
\begin{equation}    \label{e:mu5}
    \left|  \Energy (H(\sigma (\cdot , s), \frac{1}{2}
    \cB))  - \Energy (H(\sigma (\cdot , {{t}}), \frac{1}{2}
    \cB)) \right| \leq
    e_{\sigma,\epsilon}(t) /2  \, .
\end{equation}
Here we have used that  $e_{\sigma,\epsilon}(t)  > 0$ since
$\sigma (\cdot , t)$ is not harmonic.  Since $t \to \sigma (\cdot
, t)$ is continuous as a map to $C^0 \cap W^{1,2}$, we can choose
$I^t$ so that for all $s
 \in 2 \,I^t$ \eqr{e:it1} holds  and
 \begin{equation}   \label{e:mu6}
    \frac{1}{2} \, \int_{\SS^2} \left| |\nabla \sigma (\cdot , t)|^2  - |\nabla \sigma ( \cdot , s)|^2 \right| \leq
      \min \,  \{
     \frac{\epsilon}{2} , \frac{e_{\sigma,\epsilon}(t)}{2}     \} \, .
\end{equation}
Suppose now that $s \in 2 I^t$ and the energy of $\sigma ( \cdot ,
s)$ is at most $\epsilon/2$ on a collection $\cB$.  It follows
from \eqr{e:mu6}  that the energy of $\sigma ( \cdot , t)$ is at
most $\epsilon$ on $\cB$.  Combining \eqr{e:mu5} and \eqr{e:mu6}
gives
\begin{equation}    \label{e:mu7}
    \left|  \Energy (\sigma (\cdot , s)) - \Energy (H(\sigma (\cdot , s), \frac{1}{2}
    \cB)) - \Energy (\sigma (\cdot , t)) + \Energy (H(\sigma (\cdot , {{t}}), \frac{1}{2}
    \cB)) \right| \leq
    e_{\sigma,\epsilon}(t)   \, .
\end{equation}
Since this applies to any such $\cB$, we get that
$e_{\sigma,\epsilon/2}(s)  \leq 2 \, e_{\sigma,\epsilon}(t)$.
\end{proof}

Given a sweepout with no harmonic slices, the next lemma
constructs finitely many  collections of balls so that harmonic
replacement on at least one of these collections strictly
decreases the energy.  In addition,
 each collection consists of finitely many pairwise disjoint closed
balls.

\begin{Lem}     \label{l:goodballs}
If $W>0$ and  $\tilde{\gamma} \in \Omega$ has no non-constant
harmonic slices, then we get an integer $m$ (depending on
$\tilde{\gamma}$),   $m$ collections of balls $\cB_1, \dots ,
\cB_m$ in $\SS^2$, and continuous functions $r_1 , \dots , r_{m}:
[0,1] \to [0,1]$ so that for each $t$:
\begin{enumerate}
\item[(1)] At most two $r_j(t)$'s are positive and
   $   \sum_{B \in \cB_j} \, \frac{1}{2} \,\int_{ r_j(t) B} \, |\nabla \tilde{\gamma} (\cdot , t) |^2 <
    \epsilon_1 /3 $ for each $j$.
 \item[(2)] If
 $\Energy (\tilde{\gamma} (\cdot , t)) \geq W/2$, then
 there exists $j(t)$ so that harmonic replacement on
  $   \frac{r_{j(t)}}{2} \, \cB_{j(t)} $
  decreases energy by at least
$e_{\tilde{\gamma}, \epsilon_1/8}(t)/8 $.
\end{enumerate}
\end{Lem}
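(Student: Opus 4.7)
The plan is to choose, for each $t$ in the compact set $K=\{s\in[0,1]:\Energy(\tilde{\gamma}(\cdot,s))\ge W/2\}$, a finite collection of balls nearly achieving the supremum $e_{\tilde{\gamma},\epsilon_1/4}(t)$, propagate it to a neighborhood of $t$ by continuity, and then glue finitely many such local choices using piecewise-linear plateau functions subordinate to a multiplicity-$2$ refinement of the resulting cover.

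Since $\tilde{\gamma}(\cdot,t)$ is not harmonic, $e_{\tilde{\gamma},\epsilon_1/4}(t)>0$ for every $t\in K$, so by the definition of the supremum I can pick a finite collection $\cB^t$ of pairwise disjoint closed balls with $\frac12\int_{\cup\cB^t}|\nabla\tilde{\gamma}(\cdot,t)|^2\le\epsilon_1/4$ whose half-scaled harmonic replacement at $\tilde{\gamma}(\cdot,t)$ reduces the energy by at least $\frac{7}{8}\,e_{\tilde{\gamma},\epsilon_1/4}(t)$. Combining the $C^0\cap W^{1,2}$-continuity of $s\mapsto\tilde{\gamma}(\cdot,s)$, the continuity of harmonic replacement (Corollary \ref{c:trivmap2}), and Lemma \ref{l:sf} applied with $\epsilon=\epsilon_1/4$, I then produce an open interval $I^t\ni t$ on which simultaneously (a) $\frac12\int_{\cup\cB^t}|\nabla\tilde{\gamma}(\cdot,s)|^2<\epsilon_1/3$; (b) the harmonic-replacement drop at $s$ on $\frac12\cB^t$ is still at least $\frac{3}{4}\,e_{\tilde{\gamma},\epsilon_1/4}(t)$; and (c) $e_{\tilde{\gamma},\epsilon_1/8}(s)\le 2\,e_{\tilde{\gamma},\epsilon_1/4}(t)$. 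Combining (b) and (c) yields an energy drop at $s$ of at least $\frac{3}{8}\,e_{\tilde{\gamma},\epsilon_1/8}(s)\ge\frac{1}{8}\,e_{\tilde{\gamma},\epsilon_1/8}(s)$.

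Because $s\mapsto\Energy(\tilde{\gamma}(\cdot,s))$ is continuous, $K$ is compact, so I extract a finite subcover $\{I^{t_1},\dots,I^{t_N}\}$. Since $[0,1]$ has covering dimension one, this subcover admits a refinement $\{U_k\}_{k=1}^m$ of multiplicity at most $2$, with each $U_k$ contained in some $I^{t_{j(k)}}$; set $\cB_k:=\cB^{t_{j(k)}}$. Next, choose closed subintervals $V_k\subset U_k$ whose union still contains $K$, and let $r_k:[0,1]\to[0,1]$ be the continuous piecewise-linear function equal to $1$ on $V_k$, equal to $0$ off $U_k$, and linear on the two transition intervals. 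The multiplicity-$2$ property of $\{U_k\}$ (together with $r_k>0$ only on $U_k$) gives the first clause of (1); the inclusion $r_k(t)B\subset B$ combined with (a) gives the energy inequality in (1) (and when $r_k(t)=0$ the inequality is trivial). For (2), given $t\in K$ some $V_k$ contains $t$, so $r_k(t)=1$ and $\frac{r_k(t)}{2}\cB_k=\frac12\cB_k$; the required lower bound $\frac{1}{8}\,e_{\tilde{\gamma},\epsilon_1/8}(t)$ on the energy drop is then exactly the estimate produced in the previous paragraph with $s=t$.

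The main obstacle is the careful bookkeeping between three different energy thresholds. Lemma \ref{l:sf} only relates $e_{\tilde{\gamma},\epsilon/2}(s)$ to $e_{\tilde{\gamma},\epsilon}(t)$, whereas the statement demands an $\epsilon_1/3$ bound on $\int_{r_j(t)B}|\nabla\tilde{\gamma}|^2$ and a drop comparable to $e_{\tilde{\gamma},\epsilon_1/8}(t)$. Reconciling these forces the choice of the intermediate budget $\epsilon_1/4$ when picking $\cB^t$, so that Lemma \ref{l:sf} yields control of $e_{\tilde{\gamma},\epsilon_1/8}$ at nearby $s$, while simultaneously leaving enough room for the continuity of the Dirichlet integral to keep the bound below $\epsilon_1/3$ on $I^t$. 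A secondary point of care is ensuring that the shrunken plateaus $\{V_k\}$ still cover $K$ after refining the cover $\{U_k\}$ to multiplicity $2$; this is where the one-dimensionality of the parameter space (giving multiplicity $\dim(\cP)+1=2$, as noted in the paper's footnote) is essential.
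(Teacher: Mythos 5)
Your proof is correct and follows essentially the same route as the paper: pick near-optimal collections $\cB^t$ at each $t$ with energy budget $\epsilon_1/4$, propagate the energy bound and the replacement drop to an interval $I^t$ via Corollary \ref{c:trivmap2} and Lemma \ref{l:sf}, then cover the compact set $\{\Energy \geq W/2\}$ and cut off with continuous plateau functions. The only cosmetic differences are your constants ($7/8$, $3/4$ in place of the paper's $1/2$, $1/4$, both landing at the same $e_{\tilde{\gamma},\epsilon_1/8}(t)/8$ bound) and your use of a covering-dimension/shrinking-lemma argument for the multiplicity-two cover, where the paper gives an explicit interval-discarding recipe with doubled intervals.
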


\begin{proof}
Since the energy of the slices is continuous in $t$, the set $I =
\{ t   \, | \, \Energy (\tilde{\gamma} (\cdot , t)) \geq W/2 \}$
is compact.
  For each $t \in I$, choose a finite  collection $\cB^t$ of disjoint closed balls  in $\SS^2$
  with
$\frac{1}{2} \, \int_{\cup_{\cB^t}} |\nabla   \tilde{\gamma}
(\cdot , t)|^2 \leq \epsilon_1/4 $ so
\begin{equation}    \label{e:godown}
    \Energy (\gamma (\cdot , t)) - \Energy (H (\gamma (\cdot , t)
    , \frac{1}{2} \, \cB^t)) \geq \frac{ e_{\tilde{\gamma},\epsilon_1/4}(t)}{2} >
    0\, .
\end{equation}
Lemma
 \ref{l:sf} gives an open
interval $I^t$ containing $t$ so that for all
 $s \in
 2 I^t$
 \begin{equation} \label{e:godowna}
    e_{\tilde{\gamma},\epsilon_1 /8}(s) \leq 2 \,
    e_{\tilde{\gamma},\epsilon_1/4}(t)\, .
 \end{equation}
   Using the continuity of $\tilde{\gamma}(\cdot , s)$ in $C^0
\cap W^{1,2}$ and  Corollary \ref{c:trivmap2}, we can shrink $I^t$
so that    $\tilde{\gamma}(\cdot , s)$ has energy at most
$\epsilon_1/3$ in $\cB^t$ for  $s \in 2I^t$ and, in addition,
\begin{equation}    \label{e:godown2}
    \left|  \Energy (\gamma (\cdot , s)) - \Energy (H(\gamma (\cdot , s), \frac{1}{2}
    \cB^t)) - \Energy (\gamma (\cdot , t)) + \Energy (H(\gamma (\cdot , {{t}}), \frac{1}{2}
    \cB^t)) \right| \leq
   \frac{ e_{\tilde{\gamma},\epsilon_1/4}(t)}{4 }  \, .
\end{equation}

Since $I$ is compact, we can cover $I$ by finitely many $I^t$'s,
say  $I^{t_1} , \dots , I^{t_m}$.  Moreover, after discarding some
of the intervals, we can arrange that each $t$ is in at least one
closed interval $\overline{I^{t_j}}$, each $\overline{I^{t_j}}$
intersects at most two other $\overline{I^{t_k}}$'s, and the
$\overline{I^{t_k}}$'s intersecting $\overline{I^{t_j}}$ do not
intersect each other.{\footnote{We will give a recipe for doing
this. First, if $\overline{I^{t_1}}$ is contained in the union of
two other intervals, then throw it out. Otherwise, consider the
intervals whose left endpoint is in $\overline{I^{t_1}}$, find one
whose right endpoint is largest and discard the others (which are
anyway contained in these). Similarly, consider the intervals
whose right endpoint is in $\overline{I^{t_1}}$ and throw out all
but one whose left endpoint is smallest.  Next, repeat this
process on $I^{t_2}$ (unless it has already been discarded), etc.
 After at most $m$ steps, we get the desired cover. \label{fn:dim}}}  For each $j=1, \dots m$, choose a continuous function
$r_j:[0,1] \to [0,1]$ so that
\begin{itemize}
\item $r_j (t) =   1$ on $\overline{I^{t_j}}$ and $r_j
(t)$ is zero for $t \notin 2I^{t_j}$. \item $r_j(t)$ is zero on
the intervals that {\emph{do not}} intersect $\overline{I^{t_j}}$.
\end{itemize}
Property (1) follows directly and (2) follows  from
\eqr{e:godown}, \eqr{e:godowna}, and \eqr{e:godown2}.
\end{proof}

\begin{proof}
(of Theorem \ref{p:tilde}).  Let
  $\cB_1 , \dots , \cB_{m}$
and   $r_1 , \dots , r_{m}: [0,1] \to [0,\pi)$ be given by Lemma
\ref{l:goodballs}.  We will use an $m$ step replacement process to
define ${\gamma}$.  Namely, first set $\gamma^0 = \tilde{\gamma}$
and then, for each $k=1, \dots , m$, define $\gamma^k$ by applying
harmonic replacement to $\gamma^{k-1} (\cdot , t)$ on the $k$-th
family of balls $r_k(t) \, \cB_{k}$; i.e, set $\gamma^k (\cdot ,
t) = H( \gamma^{k-1} (\cdot , t), r_k(t) \, \cB_k)$. Finally, we
set ${\gamma} = \gamma^m$.

A key point in the construction is that property (1) of the family
of balls gives that only two $r_k(t)$'s are positive for each $t$.
Therefore, the energy bound on the balls given by property (1)
implies that each energy minimizing map replaces a map with energy
at most $2\epsilon_1/3 < \epsilon_1$.   Hence, Corollary
\ref{c:trivmap2} implies that these depend continuously on the
boundary values, which are themselves continuous in $t$, so that
the resulting map $\tilde{\gamma}$ is also continuous in $t$.
Finally, it is clear that $\tilde{\gamma}$ is homotopic to
$\gamma$ since continuously shrinking the disjoint closed balls on
which we make harmonic replacement gives an explicit homotopy.
Thus, $\gamma \in \Omega_{\tilde{\gamma}}$ as claimed.

 For each $t$ with $\Energy (\tilde{\gamma}
(\cdot , t)) \geq W/2$, property (2) of the family of balls gives
some $j(t)$ so that harmonic replacement for $\tilde{\gamma}
(\cdot , t)$ on $\frac{r_j(t)}{2} \, \cB_{j(t)}$ decreases the
energy by at least $\frac{e_{\tilde{\gamma},
\epsilon_1/8}(t)}{8}$. Thus, even in the worst case where $ r_j(t)
\, \cB_{j(t)}$ is the second family of balls that we do
replacement on at $t$, \eqr{e:gap} in Lemma \ref{l:patch} gives
\begin{equation}    \label{e:lastneed}
    \Energy (\tilde{\gamma} (\cdot , t)) -
    \Energy ( {\gamma} (\cdot , t)) \geq  \kappa \, \left( \frac{e_{\tilde{\gamma}, \epsilon_1/8}(t)}{8} \right)^{2} \,
    .
\end{equation}
 To establish ($B_{\Psi}$), suppose that $\cB$ is a finite
collection of disjoint closed balls in $\SS^2$ so that the energy
of $\gamma (\cdot , t)$ on $ \cB $ is at most $\epsilon_1/12$.  We
can assume that   $\gamma^k (\cdot , t)$ has energy at most
$\epsilon_1/8$ on $\cB$ for every $k$ since otherwise Theorem
\ref{l:trivmap} implies a positive lower bound for $\Energy
(\tilde{\gamma} (\cdot , t)) -
    \Energy ( {\gamma} (\cdot , t))$.  Consequently, we can
apply  \eqr{e:gap2} in Lemma \ref{l:patch} twice (first with $\mu
= 1/8$ and then with $\mu =1/4$) to get
\begin{align}    \label{e:gap3}
    \Energy ({\gamma} (\cdot , t)) - \Energy \left[ H ({\gamma} (\cdot , t),  \frac{1}{8} \,
    \cB) \right] &\leq \Energy (\tilde{\gamma} (\cdot , t)) - \Energy \left[ H (\tilde{\gamma} (\cdot , t),  \frac{1}{2} \,
    \cB) \right] +   \frac{2}{\kappa} \,
    \left( \Energy (\tilde{\gamma} (\cdot , t)) -
    \Energy ( {\gamma} (\cdot , t)) \right)^{1/2}
     \notag \\
& \leq
 e_{\tilde{\gamma}, \epsilon_1/8}(t) +  \frac{2}{\kappa} \,
   \left( \Energy (\tilde{\gamma} (\cdot , t)) -
    \Energy ( {\gamma} (\cdot , t)) \right)^{1/2}  \, .
\end{align}
Combining \eqr{e:lastneed} and \eqr{e:gap3} with Theorem
\ref{l:trivmap} gives ($B_{\Psi}$) and, thus, completes the proof.
\end{proof}

\appendix

\section{Bubble convergence implies varifold convergence}
\label{a:A}

\subsection{Bubble convergence and the topology on $\Omega$}

We will need a notion of convergence for  a sequence $v^j$ of
$W^{1,2}$ maps    to  a collection $\{ u_0 , \dots , u_m \}$ of
$W^{1,2}$ maps which is similar in spirit to the convergence in
Gromov's compactness theorem for pseudo holomorphic curves,
\cite{G}.  The notion that we will use is a slight weakening of
the bubble tree convergence developed by Parker and Wolfson for
$J$-holomorphic curves in \cite{PaW} and used by Parker for
harmonic maps in \cite{Pa}.  In our applications, the $v^j$'s will
be approximately harmonic while the limit maps $u_i$ will be
harmonic.   We will need the next definition to make this precise.

$S^+$ and $S^-$ will denote the northern and southern hemispheres
in $\SS^2$ and $p^+ = (0,0,1)$ and $p^- = (0,0,-1)$ the north and
south poles.

\begin{Def}
 Given a ball $B_r(x) \subset \SS^2$,  the {\emph{conformal
dilation}} taking $B_r(x)$ to $S^-$ is the composition of
translation $x \to p^-$ followed by dilation of $\SS^2$ about
$p^-$  taking $B_r(p^-)$ to $S^-$.
\end{Def}

The standard example of a conformal dilation  comes from applying
stereographic projection $\Pi :\SS^2 \setminus \{ (0,0,1) \} \to
\RR^2$,  then dilating $\RR^2$ by a positive  $\lambda \ne 1$, and
  applying $\Pi^{-1}$.

In the definition below of convergence, the map $u_0$ will be the
standard $W^{1,2}$-weak limit of the $v^j$'s (see (B1)), while the
other $u_i$'s will arise as weak limits of the composition of the
$v^j$'s with a divergent sequence of conformal dilations of
$\SS^2$ (see (B2)).
  The condition (B3)
guarantees that these limits all arise in genuinely distinct ways,
and the condition (B4) means that together the $u_i$'s account for
all of the energy.

\begin{Def}     \label{d:bubble}
{\bf{Bubble convergence}}.  We will say that a sequence $v^j:\SS^2
\to M$ of $W^{1,2}$ maps  converges to a collection of $W^{1,2}$
maps  $u_0 , \dots , u_m : \SS^2 \to M$ if the following hold:
\begin{enumerate}
\item[(B1)] The $v^j$'s converge weakly to $u_0$ in $W^{1,2}$ and
there is a finite set $\cS_0 = \{ x_0^1 , \dots , x_0^{k_0} \}
\subset \SS^2$ so that the $v^j$'s converge strongly to $u_0$ in
$W^{1,2}(K)$ for any compact $K \subset \SS^2 \setminus \cS_0$.
\item[(B2)]  For each $i > 0$, we get a point $x_{\ell_i} \in
\cS_0$ and a sequence of balls $B_{r_{i,j}}(y_{i,j})$ with
$y_{i,j} \to x_{\ell_i}$ and $r_{i,j} \to 0$.  Furthermore, if
$D_{i,j} : \SS^2 \to \SS^2$ is the conformal dilation taking the
southern hemisphere to
 $B_{r_{i,j}}(y_{i,j})$, then the maps
$v^j \circ D_{i,j}$ converge to $u_i$ as in (B1).  Namely,  $v^j
\circ D_{i,j} \to u_i$ weakly in $W^{1,2}(\SS^2)$ and there is a
finite set  $\cS_i$ so that the $v^j \circ D_{i,j}$'s converge
strongly in $W^{1,2}(K)$ for any compact $K \subset \SS^2
\setminus \cS_i$.
\item[(B3)] If $i_1 \ne i_2$, then
$\frac{r_{i_1,j}}{r_{i_2,j}} + \frac{r_{i_2,j}}{r_{i_1,j}} +
\frac{|y_{i_1,j} - y_{i_2,j}|^2 }{ r_{i_1,j} \, r_{i_1,j} } \to
\infty$.
 \item[(B4)] We get the energy equality $\sum_{i=0}^m
\Energy (u_i) = \lim_{j \to \infty} \Energy
        (v^j) \,$.
\end{enumerate}
\end{Def}

\subsection{Two simple examples of bubble
convergence}

The simplest non-trivial example of bubble convergence is when
each map $v^j= u \circ \Psi_j$ is the composition of a fixed
harmonic map $u:\SS^2 \to M$ with a divergent sequence of
dilations $\Psi_j :\SS^2 \to \SS^2$.  In this case, the $v^j$'s
converge to the constant map $u_0 = u(p_+)$ on each compact set of
$\SS^2 \setminus \{ p_- \}$  and  all of the energy concentrates
at the single point $p_- = \cS_0$.  Composing the $v^j$'s with the
divergent sequence $\Psi_j^{-1}$ of conformal dilations gives the
limit $u_1 = u$.

 For the second example, let $\Pi :\SS^2 \setminus \{ (0,0,1) \} \to \RR^2$
be stereographic projection and let $z= x+iy$ be complex
coordinates on $\RR^2 = \CC$.  If we set $f_j(z) = 1/(jz) + z =
\frac{z^2 +1/j}{z}$, then the maps $v^j = \Pi^{-1} \circ f_j \circ
\Pi:\SS^2 \to \SS^2$ are conformal and, therefore, also harmonic.
Since each $v^j$ is a rational map of degree two, we have $\Energy
(v^j) =   \Area (v^j) = 8\pi$. Moreover, the $v^j$'s converge away
from $0$ to the identity map which has energy $4\pi$.  The other
$4\pi$ of energy disappears at $0$ but can  be accounted for by a
map $u_1$ by composing with a divergent sequence of conformal
dilations; $u_1$ must also have degree one.  In this case,   the
conformal dilations take $f_j$ to $\tilde{f}_j (z) = f_j (z/j) =
1/z + z/j$ which converges to the conformal inversion about the
circle of radius one.

\subsection{Bubble convergence implies varifold convergence}

\begin{Pro}     \label{l:bubvar}
If a sequence $v^j$  of $W^{1,2}(\SS^2 , M)$ maps bubble converges
to a finite collection of smooth maps $u_0 , \dots , u_m: \SS^2
\to M$, then it also varifold converges.
\end{Pro}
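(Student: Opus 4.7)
The plan rests on three basic facts about the varifold integral $\int_X h\circ F_f\,J_f$. First, $|h\circ F_f\,J_f|\le\tfrac12\|h\|_\infty|\nabla f|^2$, so varifold integrals are controlled by energy. Second, for any conformal diffeomorphism $\Phi$, $F_{v\circ\Phi}(x)=F_v(\Phi(x))$ and $J_{v\circ\Phi}(x)=J_v(\Phi(x))|\det d\Phi(x)|$, which yields the conformal invariance $\int_{\Phi^{-1}(A)}h\circ F_{v\circ\Phi}\,J_{v\circ\Phi}=\int_A h\circ F_v\,J_v$. Third, strong $W^{1,2}$ convergence $f_j\to f$ on a set $A$ implies $\int_A h\circ F_{f_j}\,J_{f_j}\to\int_A h\circ F_f\,J_f$: indeed $J_{f_j}\to J_f$ in $L^1(A)$, the tangent planes $F_{f_j}$ converge to $F_f$ almost everywhere on $\{J_f>0\}$ after extracting a subsequence, and the integrand is dominated in $L^1$ by $\tfrac12\|h\|_\infty|\nabla f_j|^2$.

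Fix $h\in C^0(G_2M)$. For each large $j$, I would partition $\SS^2$ into a base piece $A_0=\SS^2\setminus\bigcup_\ell B_\rho(x_0^\ell)$, bubble cores $C_{i,j}=D_{i,j}(A_i^*)$ with $A_i^*\subset\SS^2\setminus(\cS_i\cup\{p^+\})$ a compact set, and a residual neck $N^j=\SS^2\setminus(A_0\sqcup\bigsqcup_{i\ge 1}C_{i,j})$. On $A_0$, (B1) and the third fact above give $\int_{A_0}h\circ F_{v^j}J_{v^j}\to\int_{A_0}h\circ F_{u_0}J_{u_0}$. On each $C_{i,j}$, conformal invariance followed by (B2) and the third fact gives $\int_{C_{i,j}}h\circ F_{v^j}J_{v^j}=\int_{A_i^*}h\circ F_{v^j\circ D_{i,j}}J_{v^j\circ D_{i,j}}\to\int_{A_i^*}h\circ F_{u_i}J_{u_i}$. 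Since each $\cS_i\cup\{p^+\}$ has measure zero and $u_i$ is smooth, refining $A_0\nearrow\SS^2\setminus\cS_0$ and $A_i^*\nearrow\SS^2\setminus(\cS_i\cup\{p^+\})$ drives these limits to the full $\int_{\SS^2}h\circ F_{u_i}J_{u_i}$.

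Two points remain. (a) The cores must be pairwise disjoint for $j$ large. When $x_{\ell_{i_1}}\neq x_{\ell_{i_2}}$ the cores sit in different $B_\rho$'s and are disjoint; when $x_{\ell_{i_1}}=x_{\ell_{i_2}}$, (B3) in the common stereographic chart forces either that the two bubbles are separated already at the common scale (making the cores disjoint) or that one is much smaller than the other and its image collapses to a single point $z\in\SS^2$ in the chart of the larger bubble; in the latter case $z\in\cS_{\text{larger}}\cup\{p^+\}$, because otherwise strong convergence $v^j\circ D_{\text{larger},j}\to u_{\text{larger}}$ on a neighborhood of $z$ would contradict the positive concentration of the smaller bubble's energy there. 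Choosing $A_{\text{larger}}^*$ to avoid a neighborhood of $z$ gives the disjointness, and one handles all pairs simultaneously by processing the bubble tree from leaves toward root. (b) The neck contribution is negligible: conformal invariance of the energy gives
\begin{equation*}
\int_{N^j}|\nabla v^j|^2=\Energy(v^j)-\int_{A_0}|\nabla v^j|^2-\sum_{i\ge 1}\int_{A_i^*}|\nabla(v^j\circ D_{i,j})|^2,
\end{equation*}
and strong $W^{1,2}$ convergence on each core together with the energy equality (B4) sends the right side, after refining $A_0$ and $A_i^*$, to $\sum_i\Energy(u_i)-\sum_i\Energy(u_i)=0$. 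The first fact then bounds the neck varifold contribution by this vanishing energy. Summing cores and neck produces the desired varifold convergence. The combinatorial bookkeeping in (a)---carefully using (B3) and the inclusion of deeper attachment points in each $\cS_i$---is the main obstacle.
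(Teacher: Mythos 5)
Your proposal is correct and follows essentially the same route as the paper: the paper likewise selects pairwise disjoint domains $\Omega^j_0,\dots,\Omega^j_m$ (your base piece and bubble cores) whose complement carries energy tending to zero (your neck estimate, the paper's \eqr{e:noloss}), transfers each bubble domain by the change of variables formula under $D_{i,j}$, and passes to the limit on each piece. Your ``third fact'' is exactly the paper's step of combining $L^1$ convergence of the Jacobians via \eqr{e:jinL1}, convergence in measure of the Gauss maps on the sets where $J_{u_i}\geq \epsilon$, and the generalized dominated convergence theorem.
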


Before getting to the proof, recall that a sequence of functions
$f_j$ is said to {\emph{converge in measure}} to a function $f$ if
for all $\delta > 0$ the measure of $\{ x \, | \, |f_j - f|(x) >
\delta \}$ goes to zero as $j \to \infty$; see \cite{R}, page
$95$. Clearly, $L^1$ convergence implies convergence in measure.
Furthermore, if $f_j \to f$ in measure and $h$ is uniformly
continuous, then $h\circ f_j \to h\circ f $ in measure.  Finally,
we will use the following
 general version of the
dominated convergence theorem which combines  theorem $17$ on page
92 of \cite{R}  and proposition $20$ on page $96$ of \cite{R}:
\begin{enumerate}
\item[(DCT)]
If $f_j \to f$ in measure, $g_j \to g$ in $L^1$, and $|f_j|
\leq g_j$, then $\int f_j \to \int f$.
\end{enumerate}

We will also use that the map $\nabla u \to J_u$ is continuous as
a map from $L^2$ to $L^1$ and, thus, $\Area (u)$ is continuous
with respect to $\Energy (u)$.  To be precise, if $u , v \in
W^{1,2} (\SS^2 , M)$, then
\begin{equation}    \label{e:jinL1}
    \left| J_u - J_v \right| \leq \sqrt{2} \, |\nabla u - \nabla v|^{1/2} \, \max \{  |\nabla
    u|^{3/2} , \, |\nabla v|^{3/2} \} \, .
\end{equation}
This follows from the linear algebra fact{\footnote{Note that
$|S^T T| \leq |S| \, |T|$, $\left| \Tr \, (S^T T) \right| \leq |S|
\, |T|$, and if $X_t$ is a path of $2\times 2$ matrices, then
$\partial_t \, \det X_t = \Tr \, \left( X_t^c \,
\partial_t X_t \right)$ where $X_t^c$ is the cofactor matrix given by swapping
diagonal entries and multiplying off-diagonals by $-1$. Applying
this to $X_t = \left( S + t \, (T - S) \right)^T \, \left( S + t
\, (T - S) \right)$ and using the mean value theorem gives
\eqr{e:linalgf}.}}
 that if $S$
and $T$ are $N\times 2$ matrices, then
\begin{equation}    \label{e:linalgf}
    \left| \det \left( S^T
\, S \right) - \det \left( T^T \, T \right) \right| \leq 2 \,
|T-S| \, \max \{ |S|^{3} , \, |T|^{3} \} \, ,
\end{equation}
where $|S|^2$ is the sum of the squares of the entries of $S$ and
$S^T$ is the transpose.

\begin{proof}
(of Proposition \ref{l:bubvar}.) For each $v^j$, we will let $V^j$
denote the corresponding map to $G_2
 M$.  Similarly, for each $u_i$,
 let $U_i$ denote the corresponding map to $G_2 M$.

 It follows from (B1)-(B4) that we can choose $m+1$ sequences
of domains $\Omega^j_0 , \dots , \Omega^j_m \subset \SS^2$ that
are pairwise
 disjoint for each $j$ and so that for each $i= 0, \dots , m$ applying
$D_{i,j}^{-1}$ to $\Omega^j_i$ gives a sequence of domains
converging to $\SS^2 \setminus \cS_i$ and accounts for all the
energy, that is,
\begin{equation}    \label{e:noloss}
    \lim_{j\to \infty} \int_{\SS^2 \setminus \left( \cup_i \,
    \Omega^j_i \right) } \, \, |\nabla v^j|^2 = 0 \, .
\end{equation}
 By \eqr{e:noloss},    the proposition follows from showing
 for each $i$ and any   $h$   in $C^0(G_2 M)$ that
\begin{equation}    \label{e:needi}
    \int_{\SS^2} h \circ U_i \, J_{u_i} =
\lim_{j \to \infty} \, \int_{ \Omega^j_i } h \circ V^{j}
    \, J_{v^j} =
    \lim_{j \to \infty} \, \int_{D_{i,j}^{-1} \left( \Omega^j_i \right)} h \circ
    V^{j} \circ D_{i,j}
    \, J_{\left( v^j \circ D_{i,j} \right)}   \, ,
\end{equation}
where the last equality is simply the change of variables formula
for integration.

To simplify notation in the proof of \eqr{e:needi},
  for each $i$ and $j$,  let $v_i^j$ denote the restriction of
 $v^j \circ D_{i,j}$ to $D_{i,j}^{-1} \left( \Omega^j_i \right)$ and let $V_i^j$ denote the
 corresponding map to $G_2 M$.

 Observe first that $J_{v_i^j} \to J_{u_i}$ in $L^1(\SS^2)$ by
 \eqr{e:jinL1}. Given $\epsilon > 0$ and $i$, let
 $\Omega_{\epsilon}^i$  be the set where $J_{u_i}
 \geq \epsilon$.  Since $h$ is
 bounded and $J_{v_i^j} \to J_{u_i}$ in $L^1(\SS^2)$,
 \eqr{e:needi} would follow from
 \begin{equation}    \label{e:needi2}
    \lim_{j \to \infty} \, \int_{\Omega_{\epsilon}^i} h \circ V_{i}^{j}
    \, J_{v_i^j} = \int_{\Omega_{\epsilon}^i} h \circ U_i \, J_{u_i} \, .
\end{equation}
However, given any $\delta > 0$, $W^{1,2}$ convergence  implies
that  the measure of
\begin{equation}
    \{ x \in \Omega_{\epsilon}^i \, | \, J_{v_i^j} \geq
    \frac{\epsilon}{2} {\text{ and }} |V_i^j - U_i| \geq \delta \}
\end{equation}
goes to zero as $j \to \infty$.  Since   $L^1$ convergence of
Jacobians implies that the measure of  $\{ x \in
\Omega_{\epsilon}^i \, | \, J_{v_i^j} <
    \frac{\epsilon}{2} \}$ goes to zero, it follows that
 the maps $V_{i}^{j}$ converge in measure to
 $U_i$ on $\Omega_{\epsilon}^i$.   Therefore,  the $h \circ V_{i}^{j}$'s converge in measure to
$h \circ U_i$ on $\Omega_{\epsilon}^i$.
 Consequently, the
 general version of the
dominated convergence theorem (DCT)  gives \eqr{e:needi2} and,
thus, also \eqr{e:needi}.
\end{proof}

\section{The proof of Proposition \ref{p:gl2}}  \label{s:ppp}

The proof of Proposition \ref{p:gl2} will follow the general
structure developed by Parker and Wolfson in \cite{PaW} and used
by Parker in \cite{Pa} to prove compactness of harmonic maps with
bounded energy.  The main difficulty is to rule out loss of energy
in the limit (see (B4) in the definition of bubble convergence).
The rough idea to deal with this is that energy loss only occurs
when there are very small annuli where the maps are ``almost''
harmonic and the ratio between the inner and outer radii of the
annulus is enormous. We will use Proposition \ref{c:c1} to show
that the map must be ``far'' from being conformal on such an
annulus and, thus, condition (A) allows us to rule out energy
loss.  Here ``far'' from conformal will mean that the
$\theta$-energy of the map is much less than the radial energy. To
make this precise, it is convenient to replace an annulus
$B_{\e^{r_2}} \setminus B_{\e^{r_1}}$ in $\RR^2$ by the
conformally equivalent cylinder $[r_1,r_2] \times \SS^1$.  The
(non-compact) cylinder $\RR \times \SS^1$ with the flat product
metric and  coordinates $t$ and $\theta$ will be denoted by $\cC$.
For $r_1 < r_2$, let $\cC_{r_1,r_2} \subset \cC$ be the product
$[r_1,r_2] \times \SS^1$.

\subsection{Harmonic maps on cylinders}

  The main result of this subsection is that harmonic maps with small
energy on long cylinders are almost radial. This implies that a
sequence of such maps with energy bounded away from zero is
uniformly far from being conformal and, thus, cannot satisfy (A)
in Proposition \ref{p:gl2}.  It will be used to prove a similar
result for ``almost harmonic'' maps in Proposition \ref{c:c1} and
eventually be used when we show that energy will not be lost.

\begin{Pro}     \label{c:ann2}
Given $\delta > 0$, there exist $\epsilon_2 > 0$ and  $\ell \geq
1$ depending on $\delta$ (and $M$) so that if $u$ is a
(non-constant) $C^3$ harmonic map from the flat cylinder
$\cC_{-3\ell,3\ell}= [-3\ell,3\ell] \times \SS^1$ to $M$ with
$\Energy (u) \leq \epsilon_{2}$, then
\begin{equation}
    \int_{ \cC_{-\ell,\ell} } |u_{\theta}|^2 < \delta \, \int_{ \cC_{-2\ell,2\ell} } |\nabla
    u|^2
        \, .
\end{equation}
\end{Pro}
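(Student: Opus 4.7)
My plan is to exploit Fourier-type analysis in the $\theta$ variable: for a harmonic map with small energy on a long cylinder, the higher (nonconstant) Fourier modes should decay exponentially toward the interior, so that the $\theta$-energy concentrates near the two ends. To make this precise I would derive a convexity-type inequality of the form $f''(t) \geq c\, f(t)$ (up to a small error) for
\begin{equation*}
f(t) := \tfrac{1}{2}\int_{\{t\}\times\SS^1}|u_\theta|^2(t,\theta)\, d\theta,
\end{equation*}
and then use the maximum principle to force $f$ to be exponentially small on the central sub-cylinder $\cC_{-\ell,\ell}$.

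First I would apply the Sacks-Uhlenbeck $\epsilon$-regularity (lemma $3.4$ of \cite{SaU}), with $\epsilon_2 \leq \epsilon_{SU}$, together with standard elliptic bootstrap for harmonic maps, to obtain pointwise bounds $|\nabla u|^2 + |\nabla^2 u|^2 \leq C\epsilon_2$ on the interior $\cC_{-3\ell+1,\, 3\ell-1}$. Next, I would differentiate $f$ twice and use the harmonic map equation $u_{tt} = -u_{\theta\theta} + A(u)(\nabla u, \nabla u)$, where $A$ is the second fundamental form of $M \hookrightarrow \RR^N$. After integration by parts in $\theta$ on $\SS^1$, and using $\langle u_\theta,\, A(u)(\nabla u, \nabla u)\rangle \equiv 0$ pointwise (tangential versus normal) to rewrite the $\partial_\theta A$ term, this yields
\begin{equation*}
f''(t) \,=\, \int_{\SS^1}|u_{t\theta}|^2 \,+\, \int_{\SS^1}|u_{\theta\theta}|^2 \,-\, \int_{\SS^1}\langle u_{\theta\theta},\, A(u)(\nabla u,\nabla u)\rangle.
\end{equation*}
Absorbing the last term by Young's inequality (using the pointwise bound from the previous step) and applying Wirtinger's inequality on $\SS^1$, valid since $u_\theta$ has zero $\theta$-mean as a derivative, gives $f''(t) \geq c\, f(t)$ modulo an error of size $\epsilon_2 \cdot \int|u_t|^2\, d\theta$.

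To convert this convexity into an integrated bound I would pick times $t_- \in [-2\ell+1,\, -\ell-1]$ and $t_+ \in [\ell+1,\, 2\ell-1]$ at which $f(t_\pm) \leq \tfrac{C}{\ell}\int_{\cC_{-2\ell,\, 2\ell}}|\nabla u|^2$; such $t_\pm$ exist by the mean-value theorem. Comparing $f$ on $[t_-,t_+]$ with the unique linear combination of $\cosh(\sqrt{c}\,t)$ and $\sinh(\sqrt{c}\,t)$ matching $f$ at the endpoints, the maximum principle applied to $f$ minus this comparison function gives
\begin{equation*}
f(t) \,\leq\, C\,[f(t_-) + f(t_+)]\, e^{-\sqrt{c}\,\ell} \quad\text{for } t \in [-\ell,\ell].
\end{equation*}
Integrating over $[-\ell,\ell]$ and choosing $\ell$ so large that $Ce^{-\sqrt{c}\,\ell} < \delta$ finishes the proof.

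The main obstacle is cleanly closing the convexity inequality in the second step: a naive treatment of the nonlinear term $A(u)(\nabla u, \nabla u)$ couples $f$ to the radial energy density $\int |u_t|^2\, d\theta$, which is not \emph{a priori} small compared to $f(t)$. To decouple these I expect to use the conservation law for the stress-energy tensor of a harmonic map on a cylinder, which makes $\int_{\SS^1}(|u_t|^2 - |u_\theta|^2)\, d\theta$ independent of $t$, so that the radial density is controlled by $f$ plus a constant absorbed into the maximum principle comparison. Smallness of $\epsilon_2$ is essential throughout to keep all perturbation terms subdominant to the leading exponential decay.
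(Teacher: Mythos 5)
Your proposal is essentially the paper's own proof: the same three ingredients appear in the same roles, namely the Sacks--Uhlenbeck small-energy interior estimate giving $\sup |\nabla u|^2 \leq C\, \epsilon_2$ on the inner cylinder, the convexity inequality for the slice $\theta$-energy $f(t)$ obtained from the harmonic map equation, integration by parts in $\theta$, and Wirtinger's inequality (this is exactly Lemma \ref{l:difi}), and the $t$-independence of $\int_t (|u_t|^2 - |u_\theta|^2)$ (the Hopf differential, \eqr{e:difft} and \eqr{e:216}) which converts the coupling to the radial energy into an inhomogeneous constant of size $C\,\epsilon_2\, \ell^{-1} \int_{\cC_{-2\ell,2\ell}} |\nabla u|^2$, so that $f'' \geq f - a$ as in \eqr{e:difim}. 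The only genuine difference is the ODE endgame: the paper's Lemma \ref{l:comp} is a growth dichotomy (if $\max_{[-\ell,\ell]} f \geq 2a$, then $\int f$ grows like $a \sinh (\ell/\sqrt{2})$, contradicting the total energy bound once $\ell$ is large), whereas you run a two-point boundary comparison with $\cosh/\sinh$ and the maximum principle; the two are interchangeable ways of exploiting the same convexity. One small repair to your version: with $t_{\pm}$ chosen in $[\pm(\ell+1), \pm(2\ell-1)]$, the point $t = \ell$ can lie at distance only $1$ from $t_+$, so the uniform factor ${\rm e}^{-\sqrt{c}\,\ell}$ in your pointwise bound on all of $[-\ell,\ell]$ is not correct as stated. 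Either choose $t_{\pm}$ within unit distance of $\pm 2\ell$, where the mean-value bound $f(t_{\pm}) \leq C \int_{\cC_{-2\ell,2\ell}} |\nabla u|^2$ suffices because $\ell \, {\rm e}^{-\sqrt{c}\,(\ell - 1)} \to 0$, or keep your $t_{\pm}$ and integrate the two-sided decay ${\rm e}^{-\sqrt{c}\,(t_+ - t)} + {\rm e}^{-\sqrt{c}\,(t - t_-)}$ over $[-\ell,\ell]$, which trades the exponential gain for the loss of a factor of $\ell$ and still yields $\int_{\cC_{-\ell,\ell}} |u_\theta|^2 \leq (C\epsilon_2 + C/\ell) \int_{\cC_{-2\ell,2\ell}} |\nabla u|^2$. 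With either fix your argument closes exactly as in the paper, choosing first $\epsilon_2$ small (also to absorb the $\epsilon_2 f$ part of the error into the convexity constant, as you indicate) and then $\ell$ large depending on $\delta$.
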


To show this proposition, we show a differential inequality which
leads to exponential growth for the $\theta$-energy of the
harmonic map on the level sets of the cylinder. Once we have that,
the proposition follows. Namely, if the $\theta$-energy in the
``middle'' of the cylinder was a definite fraction of the total
energy over the double cylinder, then the exponential growth would
force the $\theta$-energy of   near the boundary of the cylinder
to be too large.

The following standard lemma is the differential inequality for
the $\theta$-energy that leads to exponential growth through Lemma
\ref{l:comp} below.

\begin{Lem}     \label{l:difi}
 For a  $C^3$ harmonic map $u$ from $\cC_{r_1,r_2} \subset \cC$ to
  $M \subset \RR^N$
\begin{equation}
     \partial_t^2 \int_{t} |u_{\theta}|^2 \geq \frac{3}{2} \, \int_{t}
     |u_{\theta}|^2 - 2 \, \sup_M |A|^2 \, \int_{t} |\nabla u|^4 \, .
\end{equation}
\end{Lem}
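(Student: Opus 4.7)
Let $f(t) = \int_{\{t\}\times\SS^1} |u_\theta|^2\, d\theta$, so the quantity of interest is $f$ and the goal is a differential inequality of the form $f''(t) \geq \tfrac{3}{2} f(t) - 2\sup_M|A|^2 \int_t |\nabla u|^4$. The strategy is: differentiate $f$ twice, use the harmonic map equation to replace $u_{tt}$ by a tangential piece plus a controlled normal piece, integrate by parts in $\theta$, and then close the estimate with the Poincar\'e inequality on $\SS^1$ together with a weighted Cauchy--Schwarz.

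\textbf{Step 1: set-up of the harmonic map equation.} Since $u : \cC_{r_1,r_2} \to M \subset \RR^N$ is harmonic on the flat cylinder,
\begin{equation*}
u_{tt} + u_{\theta\theta} = Q, \qquad Q(t,\theta) \perp T_{u(t,\theta)} M,\qquad |Q| \leq \sup_M |A| \cdot |\nabla u|^2,
\end{equation*}
where $Q$ is the second-fundamental-form term of the isometric embedding $M\hookrightarrow\RR^N$. The fact $Q \perp T_u M$ will be used only implicitly through the size bound on $|Q|$.

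\textbf{Step 2: differentiating under the integral.} Using the $C^3$ regularity to differentiate twice,
\begin{equation*}
f''(t) = 2\int_{\SS^1} |u_{\theta t}|^2 \, d\theta + 2 \int_{\SS^1} u_\theta \cdot u_{\theta tt} \, d\theta.
\end{equation*}
Substitute $u_{tt} = -u_{\theta\theta} + Q$ (differentiated once in $\theta$) to rewrite $u_{\theta t t} = -u_{\theta\theta\theta} + Q_\theta$. Integrate by parts in $\theta$ on the circle: the boundary terms vanish by periodicity, and
\begin{equation*}
-\int_{\SS^1} u_\theta \cdot u_{\theta\theta\theta}\,d\theta = \int_{\SS^1} |u_{\theta\theta}|^2\, d\theta, \qquad \int_{\SS^1} u_\theta \cdot Q_\theta\, d\theta = -\int_{\SS^1} u_{\theta\theta}\cdot Q\, d\theta.
\end{equation*}
Throwing away the non-negative $2\int |u_{\theta t}|^2$ term,
\begin{equation*}
f''(t) \geq 2 \int_{\SS^1} |u_{\theta\theta}|^2\, d\theta - 2\int_{\SS^1} u_{\theta\theta}\cdot Q\, d\theta.
\end{equation*}

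\textbf{Step 3: absorbing the curvature term and Poincar\'e.} Apply Cauchy--Schwarz with weight $\tfrac14$,
\begin{equation*}
2\left|\int_{\SS^1} u_{\theta\theta}\cdot Q\, d\theta\right| \leq \tfrac12 \int_{\SS^1} |u_{\theta\theta}|^2\, d\theta + 2\int_{\SS^1} |Q|^2 \, d\theta \leq \tfrac12 \int_{\SS^1} |u_{\theta\theta}|^2\, d\theta + 2 \sup_M |A|^2 \int_{\SS^1} |\nabla u|^4\, d\theta.
\end{equation*}
Substituting leaves $f''(t) \geq \tfrac32 \int_{\SS^1} |u_{\theta\theta}|^2\, d\theta - 2\sup_M|A|^2 \int_t |\nabla u|^4$. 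Finally, since $\theta \mapsto u_\theta(t,\theta)$ has zero mean on $\SS^1$ (its integral telescopes around the periodic loop), the sharp Poincar\'e inequality on $\SS^1$ with first eigenvalue one gives $\int_{\SS^1} |u_{\theta\theta}|^2 \geq \int_{\SS^1} |u_\theta|^2 = f(t)$, completing the inequality.

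\textbf{Expected difficulty.} The argument is essentially a Bochner-type calculation, so no single step is deep. The main thing to be careful about is bookkeeping: writing the harmonic map equation in the right extrinsic form so that $Q$ is genuinely normal with the stated pointwise bound (so that Cauchy--Schwarz gives exactly the constant $2\sup_M|A|^2$), and choosing the weight in Cauchy--Schwarz (here $\tfrac12 / 2$) so that the leftover $\tfrac32 \int |u_{\theta\theta}|^2$ term survives to match Poincar\'e with the exact constant $\tfrac32$ in the statement.
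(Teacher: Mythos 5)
Your proof is correct and follows essentially the same route as the paper: differentiate $\int_t |u_\theta|^2$ twice, integrate by parts in $\theta$, use the harmonic map equation bound $|\Delta u|\leq \sup_M|A|\,|\nabla u|^2$, absorb via $2ab\leq a^2/2+2b^2$, and finish with Wirtinger's inequality applied to the mean-zero function $u_\theta$. The only difference is cosmetic bookkeeping (you differentiate the equation in $\theta$ and then integrate by parts, the paper integrates by parts first and then substitutes), which yields the identical inequality.
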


\begin{proof}
Differentiating $\int_{t} |u_{\theta}|^2$ and integrating by parts
in $\theta$ gives
\begin{align}   \label{e:difi}
    \frac{1}{2} \, \partial_t^2 \int_{t} |u_{\theta}|^2  &= \int_{t} |u_{t
    \theta}|^2 + \int_{t} \langle u_{\theta},  \, u_{tt\theta} \rangle =
    \int_{t} |u_{t
    \theta}|^2 - \int_{t} \langle u_{\theta\theta } , \, u_{tt} \rangle =
\int_{t} |u_{t
    \theta}|^2 - \int_{t} \langle u_{\theta\theta } , \, (\Delta u -
    u_{\theta \theta} ) \rangle
    \notag \\
    &\geq \int_{t} |u_{t
    \theta}|^2 + \int_{t} |u_{\theta\theta }|^2  - \sup_M |A| \, \int_{t} |u_{\theta \theta}| \,
    |\nabla u|^2 \, ,
\end{align}
where the last inequality used that $|\Delta u|\leq |\nabla u|^2
\,  \sup_M |A|$ by the harmonic map equation.{\footnote{If $u^i$
are the components of the harmonic map $u$, $g_{jk}$ is the metric
on $B$, and $A_{u(x)}^i$ is the $i$-th component of the second
fundamental form of $M$ at the point $u(x)$, then page $157$ of
\cite{SY} gives
\begin{equation}    \label{e:footnote}
    \Delta_M u^i = g^{jk} A^i_{u(x)} \left( \partial_j u , \partial_k
    u
    \right)\, .
    \end{equation}
}} The lemma follows from applying the absorbing inequality  $2ab
\leq a^2/2 + 2b^2$ and noting that $\int_{t} u_{\theta} = 0$ so
that Wirtinger's inequality gives $\int_{t} |u_{\theta}|^2 \leq
\int_{t} |u_{\theta  \theta}|^2$.
\end{proof}

\begin{Rem}     \label{r:harmonic}
The differential inequality in Lemma \ref{l:difi} immediately
implies that Proposition \ref{c:ann2} holds for harmonic
functions, i.e., when $|A| \equiv 0$, even without the small
energy assumption.  The general case will follow by using the
small energy assumption to show that the perturbation terms are
negligible.
\end{Rem}

We will need a simple ODE comparison lemma:

\begin{Lem} \label{l:comp}
Suppose that $f$ is a non-negative $C^2$ function on
$[-2\ell,2\ell] \subset \RR$ satisfying
\begin{equation}    \label{e:comp}
f'' \geq   f - a \, ,
\end{equation}
for some constant $a > 0$.  If $\max_{[-\ell,\ell]} \, f \geq 2a$,
then
\begin{equation}    \label{e:ode2}
    \int_{-2\ell}^{2\ell} \, f \geq 2\sqrt{2} \, a \, \sinh (\ell/\sqrt{2})  \, .
\end{equation}
\end{Lem}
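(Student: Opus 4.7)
\textbf{Proof proposal for Lemma \ref{l:comp}.} The plan is to exploit a key observation: wherever $f \geq 2a$, the differential inequality $f'' \geq f - a$ upgrades to $f'' \geq f/2$, whose characteristic length is $\sqrt{2}$. This is where the $\sqrt{2}$ in the conclusion originates. I would construct a $\cosh(\cdot/\sqrt{2})$ subsolution on a carefully chosen subinterval and then apply a Wronskian/maximum-principle argument to pin $f$ from below.

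First, let $K = \{x \in [-2\ell,2\ell] : f(x) \geq 2a\}$; by hypothesis $K \cap [-\ell,\ell] \neq \emptyset$. Let $J = [\alpha,\beta]$ be the connected component of $K$ meeting $[-\ell,\ell]$. On $J$ we have $f'' \geq f - a \geq a > 0$, so $f$ is strictly convex. I would first rule out the case $\alpha > -2\ell$ and $\beta < 2\ell$: at such interior endpoints the maximality of $J$ forces $f(\alpha) = f(\beta) = 2a$, and a strictly convex function with equal endpoint values lies strictly below those values in the interior, contradicting $f \geq 2a$ on $J$. Thus $J$ must touch at least one of $\pm 2\ell$, and after reflection I may assume $J = [\alpha, 2\ell]$ with $\alpha \in [-2\ell, \ell]$ (the bound $\alpha \leq \ell$ coming from $J \cap [-\ell,\ell] \neq \emptyset$).

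Now set $y(x) = 2a\cosh((x-\alpha)/\sqrt{2})$ on $[\alpha, 2\ell]$, which solves $y'' = y/2$, $y(\alpha) = 2a$, $y'(\alpha) = 0$. Writing $w = f - y$, one has $w(\alpha) = f(\alpha) - 2a \geq 0$, $w'' \geq f/2 - y/2 = w/2$ on $[\alpha, 2\ell]$, and $w'(\alpha) \geq 0$ when $\alpha > -2\ell$ (since then $f(\alpha) = 2a$ and $f \geq 2a$ on $[\alpha,2\ell]$ force $\alpha$ to be a right-sided minimum of $f$). Pairing $w$ with $\psi(x) = \sinh((x-\alpha)/\sqrt{2})$, which also solves $\psi'' = \psi/2$ with $\psi(\alpha) = 0$ and $\psi'(\alpha) = 1/\sqrt{2}$, a short calculation gives
\[
(w\psi' - w'\psi)' \;=\; \psi\bigl(w/2 - w''\bigr) \;\leq\; 0 .
\]
Since $w\psi' - w'\psi$ vanishes at $\alpha$, it is nonpositive on $[\alpha, 2\ell]$, so $(w/\psi)' \geq 0$ on $(\alpha, 2\ell]$; L'H\^opital at $\alpha^+$ gives $\lim w/\psi = \sqrt{2}\,w'(\alpha) \geq 0$, hence $w \geq 0$, i.e., $f \geq y$ throughout $[\alpha, 2\ell]$.

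Integration then yields
\[
\int_{-2\ell}^{2\ell} f \;\geq\; \int_\alpha^{2\ell} 2a\cosh((x-\alpha)/\sqrt{2})\,dx \;=\; 2\sqrt{2}\,a\,\sinh\bigl((2\ell - \alpha)/\sqrt{2}\bigr) \;\geq\; 2\sqrt{2}\,a\,\sinh(\ell/\sqrt{2}),
\]
using $f \geq 0$ and $2\ell - \alpha \geq \ell$. The remaining case $J = [-2\ell, 2\ell]$ is handled analogously by choosing $x_0$ to be a minimum of $f$ on $J$ (interior, where $f'(x_0) = 0$, or at the boundary, which still gives the correct sign on $f'$) and running the same Wronskian argument two-sidedly; this produces an even larger bound. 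The main obstacle is keeping the case analysis honest and justifying $f'(\alpha) \geq 0$ from the component structure of $J$; the Wronskian step itself is routine once the right comparison function with characteristic length $\sqrt{2}$ has been identified.
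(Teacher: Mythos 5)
Your proof is correct, and it reaches the paper's key mechanism --- on the set where $f \geq 2a$ one has $f'' \geq f/2$, so $f$ dominates $2a\cosh(t/\sqrt{2})$ on an interval of length at least $\ell$ reaching toward the boundary --- by a genuinely different route. The paper starts from a point $x_0 \in [-\ell,\ell]$ where $f$ attains its maximum on $[-\ell,\ell]$ (after reflection, $f'(x_0)\geq 0$), shows by a first-zero argument that $f' > 0$ on $(x_0,2\ell]$, hence $f \geq 2a$ and $f'' \geq f/2$ there, and then quotes a Riccati comparison to get $f(x_0+t) \geq 2a\cosh(t/\sqrt{2})$ before integrating over $[0,\ell]$. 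You instead analyze the connected component $J$ of the superlevel set $\{f \geq 2a\}$ meeting $[-\ell,\ell]$, use strict convexity ($f'' \geq a>0$ on $J$) to force $J$ to reach $\pm 2\ell$, and then prove the same pointwise bound by an explicit Wronskian/Sturm comparison with $\sinh((x-\alpha)/\sqrt{2})$ rather than citing a Riccati lemma. What your route buys is a self-contained comparison step and a structural picture of the superlevel set; what the paper's route buys is the avoidance of any case analysis on $J$ --- the monotonicity claim automatically places $x_0$ with the right derivative sign and carries $f \geq 2a$ all the way to $2\ell$, so there is no separate case $J = [-2\ell,2\ell]$ and no degenerate component to worry about. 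Two loose ends in your write-up, neither fatal: your convexity exclusion of an interior component tacitly assumes $\alpha < \beta$, and the single-point case $J=\{\alpha\}$ should be dispatched by noting that an interior local maximum with $f(\alpha)=2a$ would force $f''(\alpha)\leq 0$, contradicting $f''(\alpha) \geq a > 0$; and the final case $J=[-2\ell,2\ell]$, which you only sketch, does go through because from a minimum point of $f$ the side on which the one-sided derivative has the favorable sign has length at least $2\ell \geq \ell$.
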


\begin{proof}
Fix some $x_0 \in [-\ell,\ell]$ where $f$ achieves its maximum on
$[-\ell,\ell]$.  Since the lemma is invariant under reflection $x
\to -x$, we can assume that $x_0 \geq 0$.  If $x_0$ is an interior
point, then $f'(x_0)=0$; otherwise, if $x_0 = \ell$, then $f'(x_0)
\geq 0$. In either case, we get
    $f'(x_0) \geq 0$.
Since $f(x_0) \geq 2a$, \eqr{e:comp} gives $f''(x_0) \geq a
> 0$ and, hence, $f'$ is strictly increasing at $x_0$.

We claim that $f'(x) > 0$ for all $x$ in $(x_0, 2\ell]$.  If not,
then there would be a first point $y > x_0$ with
$f'(y) = 0$. It follows that $f' \geq 0$ on $[x_0,y]$ so that $f
\geq f(x_0) \geq 2a$ on $[x_0,y]$ and, thus, that $f'' \geq a > 0$
on $[x_0,y]$,
 contradicting that $f'(y) \leq f'(x_0)$.

By the claim,  $f$ is  monotone increasing on $[x_0,2\ell]$ so
that \eqr{e:comp} gives
\begin{equation}    \label{e:fpg1}
f'' \geq \frac{1}{2} \, f  {\text{ on }}  [x_0,2\ell] \, .
\end{equation}
By a standard Riccati comparison argument using $f'(x_0) \geq 0$
and \eqr{e:fpg1} (see, e.g., corollary $A.9$ in \cite{CDM}), we
get for $t \in [0,2\ell-x_0]$
\begin{equation}    \label{e:ode}
f(x_0 + t) \geq f(x_0) \, \cosh (t/\sqrt{2}) \geq  2\,a \, \cosh (
t/\sqrt{2})   \, .
\end{equation}
Finally, integrating \eqr{e:ode} on $[0,\ell]$ gives \eqr{e:ode2}.
\end{proof}

\begin{proof} (of Proposition \ref{c:ann2}.)
Since we will choose $\ell \geq 1$ and $\epsilon_2 <
\epsilon_{SU}$, the small-energy interior estimates for harmonic
maps (see lemma
 $3.4$ in  \cite{SaU}; cf. \cite{SU1}) imply
that
\begin{equation}
    \sup_{\cC_{-2\ell,2\ell}} \, |\nabla u|^2 \leq C_{SU} \,  \int_{\cC_{-3\ell,3\ell}} \, |\nabla
    u|^2 \leq  C_{SU} \, \epsilon_{2} \, .
\end{equation}
Set $f(t) = \int_{t} |u_{\theta}|^2$.  It follows from Lemma
\ref{l:difi}  that
\begin{equation}   \label{e:difim}
    f''(t)  \geq  \frac{3}{2} \, f(t)  - 2\,  \sup_M |A|^2 \, C_{SU} \, \epsilon_2 \, \int_{t}
     (|u_{\theta}|^2 + |u_t|^2)\geq  f(t) - C \, \epsilon_2 \,  \int_{t} (|u_t|^2 - |u_{\theta}|^2)
    \, ,
\end{equation}
where $C = 2\, C_{SU} \, \sup_M |A|^2$ and we have assumed that
$C\, \epsilon_2 \leq 1/4$ in the second inequality.

We will use that  $\int_{t} (|u_t|^2 - |u_{\theta}|^2)$ is
constant in $t$. To see this, differentiate  to get
\begin{equation}    \label{e:difft}
\frac{1}{2} \, \partial_t \, \int_{t} (|u_t|^2 - |u_{\theta}|^2) =
\int_{t} \left( \langle u_t ,u_{tt} \rangle - \langle u_{\theta} ,
u_{t\theta} \rangle \right) = \int_{t} \langle u_t ,( u_{tt} +
u_{\theta \theta})\rangle = 0 \, ,
\end{equation}
where the second equality used integration by parts in $\theta$
and the last equality used that $u_{tt} + u_{\theta \theta} =
\Delta u$ is normal to $M$ while $u_t$ is
tangent.{\footnote{In fact, something much stronger is true:
The complex-valued function
$$
    \phi (t,\theta) = (|u_t|^2 - |u_{\theta}|^2) - 2\,i \, \langle u_t ,
    u_{\theta}\rangle
$$
is holomorphic on the cylinder (see page $6$ of \cite{SY}).  This
is usually called the Hopf differential.}} Bound this
constant by
\begin{equation}    \label{e:216}
    \int_{t} (|u_t|^2 - |u_{\theta}|^2) = \frac{1}{4\ell} \, \int_{\cC_{-2\ell,2\ell}} (|u_t|^2 -
    |u_{\theta}|^2)\leq \frac{1}{4\ell} \, \int_{\cC_{-2\ell,2\ell}}  |\nabla u|^2 \, .
\end{equation}

 By \eqr{e:difim} and \eqr{e:216}, Lemma \ref{l:comp}
 with $a =  \frac{C \, \epsilon_2}{4\ell} \, \int_{\cC_{-2\ell,2\ell}}  |\nabla u|^2$
  implies that either
\begin{equation}    \label{e:p1}
    \max_{[-\ell,\ell]} \, f  < 2 \,   \frac{C \, \epsilon_2}{4\ell} \, \int_{\cC_{-2\ell,2\ell}}  |\nabla u|^2 \, ,
\end{equation}
or
\begin{equation}
    \int_{\cC_{-2\ell,2\ell}}   |  u_{\theta}|^2 = \int_{-2\ell}^{2\ell} \, f(t) \, dt
    \geq  2\sqrt{2} \, C \, \epsilon_2 \,   \frac{\sinh (\ell/\sqrt{2})}{4\ell} \, \int_{\cC_{-2\ell,2\ell}} |\nabla u|^2   \, .
\end{equation}
The second possibility cannot occur as long as $\ell$ is
sufficiently large so that we have
\begin{equation}    \label{e:choosen}
    2\sqrt{2} \, C \, \epsilon_2 \,   \frac{\sinh (\ell/\sqrt{2})}{4\ell} > 1 \, .
\end{equation}
Using the upper bound \eqr{e:p1} for $f$ on $[-\ell,\ell]$ to
bound the integral of $f$ gives
\begin{equation}
    \int_{\cC_{-\ell,\ell}}
    |  u_{\theta}|^2  \leq 2 \ell \, \max_{[-\ell,\ell]} \, f  <  C \, \epsilon_2 \,
     \int_{\cC_{-2\ell,2\ell}}  |\nabla u|^2 \, .
\end{equation}
The proposition follows by choosing $\epsilon_2 > 0$ so that $C \,
\epsilon_2 < \min \{ 1/4 , \,  \delta \}$ and then choosing $\ell$
so that \eqr{e:choosen} holds.
\end{proof}

\subsection{Weak compactness of almost harmonic maps}

We will need a compactness theorem for a sequence of maps  $u^j$
in $W^{1,2} (\SS^2 , M)$ which have uniformly bounded energy and
are locally well-approximated by harmonic maps. Before stating
this precisely, it is useful to recall the situation for harmonic
maps.  Suppose therefore that $u^j:\SS^2 \to M$ is a sequence of
harmonic maps with $\Energy (u^j) \leq E_0$ for some fixed $E_0$.
 After passing to a subsequence, we can assume that the measures
 $|\nabla u^j|^2 \, dx$ converge and there is
 a finite set $\cS$ of points where the energy concentrates so
that:
\begin{align}    \label{e:smalle1}
    {\text{If }} x &\in \cS , {\text{ then }} \inf_{r > 0} \, \left[ \lim_{j\to \infty} \, \int_{B_r(x)}
    |\nabla u^j|^2 \right] \geq \epsilon_{SU} \, . \\
{\text{If }} x &\notin \cS , {\text{ then }}
    \inf_{r > 0} \, \left[ \lim_{j\to \infty} \, \int_{B_r(x)}
    |\nabla u^j|^2 \right] < \epsilon_{SU} \, . \label{e:smalle2}
\end{align}
The constant $\epsilon_{SU} > 0$ comes from \cite{SaU}, so that
\eqr{e:smalle2} implies uniform $C^{2,\alpha}$ estimates on the
$u^j$'s in some neighborhood of $x$.  Hence, Arzela-Ascoli and a
diagonal argument give a further subsequence of the $u^j$'s
$C^2$-converging   to a harmonic map on every compact subset of
$\SS^2 \setminus \cS$.  We will need a more general version of
this, where $u^j:\SS^2 \to M$ is a sequence of $W^{1,2}$ maps with
$\Energy(u^j) \leq E_0$ that are $\epsilon_0$-almost harmonic in
the following sense:

\noindent ($B_0$) If $B \subset \SS^2$ is any ball with $\int_B
|\nabla u^j|^2 < \epsilon_0$, then there is an
 energy minimizing
map $v:   \frac{1}{8}B \to M$ with the same boundary values as
$u^j$ on $\partial \frac{1}{8}B$ with
\begin{equation}    \label{e:lfv3}
    \int_{\frac{1}{8}B} \left| \nabla u^j - \nabla v \right|^2 \leq  1/j \,
    .  \notag
\end{equation}

 \begin{Lem}     \label{l:gl2}
Let   $\epsilon_0 > 0$ be  less than $\epsilon_{SU}$. If
  $u^j:\SS^2 \to M$ is a sequence of $W^{1,2}$ maps satisfying ($B_0$)
  and with
$\Energy(u^j) \leq E_0$, then there exists a finite collection of
points $\{ x_1 , \dots , \, x_k \}$,  a subsequence still denoted
by $u^j$, and a harmonic map $u:\SS^2 \to M$ so that $u^j \to u$
weakly in $W^{1,2}$ and if $K \subset \SS^2 \setminus \{ x_1 ,
\dots , \, x_k \}$ is compact, then $u^j \to u$ in $W^{1,2}(K)$.
Furthermore, the measures $|\nabla u^j|^2 \, dx$ converge to a
 measure $\nu$ with $\epsilon_{0} \leq \nu (x_i)$ and $\nu
(\SS^2) \leq E_0$.
\end{Lem}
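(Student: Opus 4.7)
\textbf{Proof plan for Lemma \ref{l:gl2}.}
The plan is to combine weak compactness of bounded-energy maps with the almost-harmonic condition $(B_0)$ and the small-energy interior estimates of Sacks--Uhlenbeck / Schoen--Uhlenbeck to upgrade weak to strong convergence away from a finite concentration set.

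First I would extract subsequences so that $u^j \rightharpoonup u$ weakly in $W^{1,2}(\SS^2,\RR^N)$ (the image lying in $M$ since $M$ is closed) and so that the Radon measures $\mu^j = |\nabla u^j|^2\, dx$ converge weakly-$*$ to a finite Radon measure $\nu$ with $\nu(\SS^2) \leq E_0$; this is just Banach--Alaoglu applied to the uniformly bounded sequences. Define the concentration set
\begin{equation}
    \cS = \{ x \in \SS^2 \, : \, \nu(\{x\}) \geq \epsilon_0 \} \, .
\end{equation}
Since $\nu(\SS^2) \leq E_0$, $\cS$ is finite; label its points $x_1,\dots,x_k$. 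This also gives the asserted lower bound $\epsilon_0 \leq \nu(x_i)$.

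Next, for any $x \notin \cS$, pick a ball $B = B_r(x) \subset \SS^2 \setminus \cS$ with $\nu(\overline{B}) < \epsilon_0$; by weak-$*$ convergence, $\int_B |\nabla u^j|^2 < \epsilon_0$ for $j$ large. Apply $(B_0)$ to get energy minimizing maps $v^j : \frac{1}{8}B \to M$ with the same boundary values as $u^j$ on $\partial(\frac{1}{8}B)$ and $\int_{\frac{1}{8}B}|\nabla(u^j - v^j)|^2 \leq 1/j$. The $v^j$ still have energy $< \epsilon_0 < \epsilon_{SU}$ on $\frac{1}{8}B$, so the small-energy interior $C^{2,\alpha}$ estimates of \cite{SaU} (see also \cite{SU1}) give uniform interior bounds on $\frac{1}{16}B$, and Arzela--Ascoli passes to a subsequence converging in $C^2_{\mathrm{loc}}$ to a harmonic map. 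Since $u^j - v^j \to 0$ in $W^{1,2}(\frac{1}{8}B)$, this local limit must equal the weak limit $u$, so $u^j \to u$ strongly in $W^{1,2}_{\mathrm{loc}}(\frac{1}{8}B)$ and $u$ is harmonic there. A standard Lindel\"of / diagonal subsequence argument over a countable exhaustion of $\SS^2 \setminus \cS$ by such balls produces one subsequence with $u^j \to u$ in $W^{1,2}(K)$ for every compact $K \subset \SS^2 \setminus \cS$, and $u$ weakly harmonic on $\SS^2 \setminus \cS$.

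Finally, I would show that $u$ extends across each $x_i$ to a globally harmonic map on $\SS^2$. The limit $u$ lies in $W^{1,2}(\SS^2,M)$ with energy bounded by $E_0 < \infty$, so by the removable singularities theorem for weakly harmonic maps from surfaces with finite energy (corollary 3.7 of \cite{SaU}, via Hélein/Rivi\`ere-type regularity) $u$ extends smoothly across the finitely many points $x_i$ and is harmonic on $\SS^2$.

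The main obstacle is the step that converts $(B_0)$ into genuine strong convergence of $u^j$: one must verify that the energy-minimizing comparisons $v^j$ inherit uniform interior estimates from the small-energy hypothesis and that their $C^2$ limit really coincides with the weak $W^{1,2}$ limit of $u^j$. Once that identification is made, the rest is a routine concentration-compactness / diagonal argument and the removable singularity theorem.
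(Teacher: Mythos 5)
Your proposal is correct and follows essentially the same route as the paper's proof: weak $W^{1,2}$ and measure compactness, a finite concentration set where $\nu$ has mass at least $\epsilon_0$, harmonic replacement from $(B_0)$ combined with the Sacks--Uhlenbeck small-energy estimates on balls away from that set to upgrade to strong convergence and harmonicity of the limit, a covering/diagonal argument for compact subsets, and the removable singularity theorem at the concentration points. The only detail to make explicit is that $(B_0)$ controls just $\int |\nabla(u^j-v^j)|^2$, so you need the Dirichlet Poincar\'e inequality (valid because $u^j=v^j$ on $\partial \frac{1}{8}B$) to conclude $u^j-v^j\to 0$ in the full $W^{1,2}$ norm before identifying the local harmonic limit with the weak limit $u$, exactly as the paper does.
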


\begin{proof}
After passing to a subsequence, we can assume that:
\begin{itemize}
\item The $u^j$'s converge weakly in $W^{1,2}$
 to a $W^{1,2}$ map $u:\SS^2 \to M$.
\item
 The measures
$|\nabla u^j|^2 \, dx$ converge to a limiting measure $\nu$ with
$\nu (\SS^2) \leq E_0$.
\end{itemize}
  It follows that there are at
most $E_0/\epsilon_0$ points $x_1 , \dots , x_k$ with $ \lim_{r
\to 0} \, \nu \, \left( B_{r} (x_j) \right) \geq
    \epsilon_0 \, .
$

We will show next that away from the $x_i$'s the convergence is
strong in $W^{1,2}$ and $u$ is harmonic.  To see this, consider a
point  $x \notin \{ x_1 , \dots , \, x_k \}$.  By definition,
there exist $r_x
> 0$ and $J_x$ so that $\int_{B_{r_x}(x)} |\nabla u^j|^2 <
\epsilon_0$ for $j \geq J_x$.   In particular, ($B_0$) applies so
we get energy minimizing maps $v^j_x:  \frac{1}{8} \, B_{r_x}(x)
\to M$ that agree with $u^j$ on $\partial \frac{1}{8} \,
B_{r_x}(x)$ and satisfy
\begin{equation}
    \int_{ \frac{1}{8} \, B_{r_x}(x) } \, \left| \nabla
v^j_x - \nabla u^j  \right|^2 \leq 1 / j \, . \label{e:useH2}
\end{equation}
(Here $\frac{1}{8} \, B_{r_x}(x)$ is the ball in $\SS^2$ centered
at $x$ so that the stereographic projection $\Pi_x$ which takes
$x$ to $0\in \RR^2$ takes $\frac{1}{8} \, B_{r_x}(x)$ and
$B_{r_x}(x)$ to balls centered at $0$ whose radii differ by a
factor of $8$.)
 Since $\Energy
(v^j_x) \leq \epsilon_0 \leq \epsilon_{SU}$, it follows from lemma
 $3.4$ in  \cite{SaU} (cf. \cite{SU1}) that a subsequence
of the $v^j_x$'s converges strongly  in  $W^{1,2}(\frac{1}{9}
\,B_{r_x}(x))$ to
 a harmonic map $v_x:
\frac{1}{9} \,B_{r_x}(x) \to M$. Combining with the triangle
inequality and \eqr{e:useH2}, we get
\begin{equation}    \label{e:vx1}
    \int_{ \frac{1}{9} \, B_{r_x}(x) } \left| \nabla u^j - \nabla v_x \right|^2
    \leq 2 \,  \int_{\frac{1}{9} \, B_{r_x}(x) } \left| \nabla u^j - \nabla
    v_x^j
    \right|^2 + 2 \,  \int_{ \frac{1}{9} \, B_{r_x}(x) } \left| \nabla v^j_x - \nabla v_x
    \right|^2 \to 0 \, .
\end{equation}
Similarly, this convergence, the triangle inequality,
\eqr{e:useH2}, and the Dirichlet Poincar\'e inequality (theorem
$3$ on page $265$ of \cite{E}; this applies since $v^j_x$ equals
$u^j$ on $\partial \frac{1}{8} \,B_{r_x}(x)$) give
\begin{equation}    \label{e:vx2}
    \int_{\frac{1}{9} \, B_{r_x}(x) } \left|   u^j -   v_x \right|^2
    \leq 2 \,  \int_{\frac{1}{8} \, B_{r_x}(x) } \left|   u^j -
    v_x^j
    \right|^2 + 2 \,  \int_{\frac{1}{9} \, B_{r_x}(x) } \left|   v^j_x -   v_x
    \right|^2 \to 0 \, .
\end{equation}
Combining \eqr{e:vx1} and \eqr{e:vx2}, we see that the $u^j$'s
converge to $v_x$ strongly in $W^{1,2}(\frac{1}{9} \,B_{r_x}(x))$.
In particular, $u \big|_{\frac{1}{9} \, B_{r_x}(x) } = v_x$. We
conclude that $u$ is harmonic on $\SS^2 \setminus \{ x_1 , \dots ,
\, x_k \}$. Furthermore, since any compact $K \subset \SS^2
\setminus \{ x_1 , \dots , \, x_k \}$ can be covered by a finite
number of such ninth-balls, we get that $u^j \to u$ strongly in
$W^{1,2}(K)$.

Finally, since $u$   has finite energy, it must have removable
singularities  at each of the $x_i$'s and, hence, $u$
 extends to a harmonic map on all of $\SS^2$ (see theorem
 $3.6$ in
 \cite{SaU}).
\end{proof}

\subsection{Almost harmonic maps on cylinders}

The main result of this subsection, Proposition \ref{c:c1} below,
extends Proposition \ref{c:ann2} from harmonic maps     to
``almost harmonic'' maps.  Here ``almost harmonic'' is made
precise in Definition \ref{d:ah} below and roughly means that
harmonic replacement on certain balls does not reduce the energy
  by much.

\begin{Def}     \label{d:ah}
Given $\nu > 0$ and a cylinder $\cC_{r_1, r_2}$,   we will say
that a $W^{1,2}(\cC_{r_1, r_2} , \, M)$ map $u$ is $\nu$-almost
harmonic if for any  finite collection of disjoint closed balls
$\cB$ in
  the conformally equivalent annulus $B_{\e^{r_2}} \setminus B_{ \e^{r_1}} \subset \RR^2$  there is an
 energy minimizing
map $v:   \cup_{\cB} \frac{1}{8}B \to M$ that equals $u$ on $
\cup_{\cB} \frac{1}{8} \partial B $ and satisfies
\begin{equation}
    \int_{\cup_{\cB} \frac{1}{8}B} \left| \nabla u - \nabla v \right|^2 \leq  \frac{ \nu}{2} \, \int_{\cC_{r_1, r_2}} |\nabla u|^2 \, .
\end{equation}
\end{Def}

  We have used a slight abuse of notation, since our sets will always be
  thought of as being subsets of the cylinder; i.e., we identify   Euclidean
  balls in the annulus with their image under the conformal  map
  to the cylinder.

In this subsection and the two that follow it, given $\delta > 0$,
the constants $\ell \geq 1$  and $\epsilon_2 > 0$ will be given by
Proposition \ref{c:ann2};
 these depend only on $M$ and $\delta$.

\begin{Pro}     \label{c:c1}
Given $\delta > 0$, there exists $\nu > 0$ (depending on $\delta$
and $M$) so that if  $m$ is a positive integer and $u$ is
$\nu$-almost harmonic from $\cC_{-(m+3)\ell,3\ell}$ to $M$ with
$\Energy (u) \leq \epsilon_{2}$, then
\begin{equation}
    \int_{ \cC_{-m\ell,0} } \left|  u_{\theta} \right|^2 \leq 7 \,\delta \,
    \int_{\cC_{-(m+3)\ell,3\ell}} |\nabla u|^2
        \, .
\end{equation}
\end{Pro}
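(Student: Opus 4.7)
The plan is to leverage Proposition \ref{c:ann2} locally, on length-$6\ell$ sub-cylinders of $\cC_{-(m+3)\ell,3\ell}$, after replacing $u$ by a genuinely harmonic comparison map on each such sub-cylinder. Partition $\cC_{-m\ell,0}$ into disjoint length-$2\ell$ sub-cylinders $I_k$ (say $I_k=\cC_{-2k\ell,-(2k-2)\ell}$ for $k=1,\ldots,m/2$, taking $m$ even for simplicity), and for each $k$ let $L_k=\cC_{-(2k+2)\ell,-(2k-4)\ell}$ be the length-$6\ell$ sub-cylinder concentric with $I_k$. One checks $L_k\subset\cC_{-(m+3)\ell,3\ell}$ and that each point of the large cylinder lies in at most three of the $L_k$'s, so $\sum_k\int_{L_k}|\nabla u|^2\le 3\int_{\cC_{-(m+3)\ell,3\ell}}|\nabla u|^2$. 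Thus if one can prove the local estimate $\int_{I_k}|u_\theta|^2\le 2\delta\int_{L_k}|\nabla u|^2$ for each $k$, summing yields $\int_{\cC_{-m\ell,0}}|u_\theta|^2\le 6\delta\int|\nabla u|^2<7\delta\int|\nabla u|^2$, as desired (the case of odd $m$ requires only cosmetic changes).

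To prove this local estimate on a fixed $L_k$, I would produce a genuinely harmonic $w_k:L_k\to M$ whose $W^{1,2}$ distance to $u$ can be made arbitrarily small by shrinking $\nu$; specifically, $\int_{L_k}|\nabla u-\nabla w_k|^2\le\Phi(\nu)\int_{L_k}|\nabla u|^2$ for some $\Phi$ independent of $u$ with $\Phi(\nu)\to 0$ as $\nu\to 0$. Given such $w_k$, Proposition \ref{c:ann2} applies (since $\Energy(w_k)\le\Energy(u)\le\epsilon_2$ and $L_k$ has length $6\ell$) and yields $\int_{I_k}|(w_k)_\theta|^2\le\delta\int_{L_k}|\nabla w_k|^2\le\delta\int_{L_k}|\nabla u|^2$. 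Combining with the elementary inequality $|u_\theta|^2\le 2|(w_k)_\theta|^2+2|u_\theta-(w_k)_\theta|^2$ and choosing $\nu$ so small that $2\Phi(\nu)\le\delta$ gives the desired $\int_{I_k}|u_\theta|^2\le 2\delta\int_{L_k}|\nabla u|^2$.

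The construction of $w_k$ is the crux. The obvious candidate---the energy minimizer on $L_k$ with Dirichlet data $u|_{\partial L_k}$---is not directly known to be $W^{1,2}$-close to $u$, because the $\nu$-almost harmonic hypothesis only controls replacements on Euclidean balls in the ambient annulus, not on cylindrical sub-regions. I would instead resolve this by a compactness-by-contradiction argument: were no uniform $\Phi$ to exist, there would be a sequence $\nu_n\to 0$ and $\nu_n$-almost harmonic maps $u_n:L_k\to M$ with $\Energy(u_n)\le\epsilon_2$ admitting no nearby harmonic map. Arguing as in Lemma \ref{l:gl2}---small-energy interior estimates from \cite{SaU} together with the $\nu_n$-almost harmonic condition applied to families of Euclidean balls inside the annulus corresponding to $L_k$---a subsequence converges weakly in $W^{1,2}(L_k)$ and strongly on compact subsets of $L_k\setminus\cS$ (with $\cS\subset L_k$ a finite concentration set) to a harmonic limit $u_\infty$, contradicting the assumption.

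The main obstacle is controlling possible energy concentration (bubbling) in this compactness step, since bubbles on a cylinder could in principle carry away $\theta$-energy disproportionately and spoil the limiting estimate. Fortunately, $\int_{L_k}|\nabla u|^2\le\Energy(u)\le\epsilon_2<\epsilon_{SU}$ lies below the \cite{SaU} bubbling threshold, which forbids nontrivial bubbles and forces strong $W^{1,2}$ convergence on all of $L_k$. This smallness hypothesis is exactly what makes Proposition \ref{c:ann2} effective in the harmonic case, and it is also the essential reason the estimate passes to the limit without loss of constants, so that the contradiction closes.
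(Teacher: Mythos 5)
There is a genuine gap, and it sits exactly at the point you flagged as the crux. Your scheme needs, for \emph{every} sub-cylinder $L_k$, a harmonic comparison map $w_k$ with $\int_{L_k}|\nabla u-\nabla w_k|^2\leq\Phi(\nu)\int_{L_k}|\nabla u|^2$, and your compactness-by-contradiction argument implicitly treats $u|_{L_k}$ as a $\nu_n$-almost harmonic map \emph{on $L_k$}, i.e.\ with replacement defect small relative to $\int_{L_k}|\nabla u|^2$. But the hypothesis of the proposition only says $u$ is $\nu$-almost harmonic on the whole cylinder $\cC_{-(m+3)\ell,3\ell}$: harmonic replacement on balls inside $L_k$ is allowed to drop the energy by as much as $\frac{\nu}{2}\int_{\cC_{-(m+3)\ell,3\ell}}|\nabla u|^2$, which can be enormous compared with $\int_{L_k}|\nabla u|^2$ since $m$ is unbounded (the local energy can be an arbitrarily small fraction of the total, e.g.\ of order $\epsilon_2/m$). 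So no fixed $\nu=\nu(\delta,M)$ makes the restriction almost harmonic on each $L_k$ in the relative sense your contradiction argument requires, and in fact the uniform local estimate $\int_{I_k}|u_\theta|^2\leq 2\delta\int_{L_k}|\nabla u|^2$ can simply fail on some sub-cylinders: on a piece carrying only a sliver of the total energy, $u$ may be nearly conformal and $\theta$-dominated without violating the global hypothesis. Your summation step then breaks down, because it needs the local estimate on all $k$.

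The paper's proof confronts exactly this failure of localization. It also cuts the long cylinder into length-$6\ell$ pieces and proves a fixed-size compactness statement (Lemma \ref{p:perturb}, the almost-harmonic version of Proposition \ref{c:ann2}), but it only applies that lemma on the ``good'' pieces where the restriction happens to be $\mu$-almost harmonic relative to its own energy; these contribute at most $6\delta\int|\nabla u|^2$ of $\theta$-energy after accounting for the bounded overlap. For the ``bad'' pieces it abandons any $\theta$-energy estimate and instead bounds their \emph{total} energy: on each bad piece there is a ball family whose replacement defect exceeds $\mu$ times the local energy, a pigeonhole argument extracts a subfamily of bad pieces with pairwise disjoint interiors (ten subcollections, since each piece meets only the ten neighbors with $0<|j-k|\leq 5$), and then the \emph{global} $\nu$-almost harmonicity applied to the union of the disjoint ball families forces $\sum_{\text{bad}}\int_{\cC(j)}|\nabla u|^2\leq\frac{10\nu}{\mu}\int|\nabla u|^2$. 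Choosing $\nu$ small relative to $\mu\delta$ then gives the $7\delta$ bound. Some counting device of this kind is unavoidable; your outline, as written, has no mechanism for handling the sub-cylinders on which the almost-harmonicity does not localize. (Your secondary worry about bubbling is fine as stated, and the energy-going-to-zero case in the compactness step is handled in the paper by rescaling, as in Lemma \ref{p:perturb}; but these points do not repair the localization gap.)
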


We will prove Proposition \ref{c:c1} by using a compactness
argument to reduce it to the case of harmonic maps and then appeal
to Proposition \ref{c:ann2}.  A key difficulty is that there is no
upper bound on the length of the cylinder in Proposition
\ref{c:c1} (i.e., no upper bound on $m$), so we cannot directly
apply the compactness argument. This will be taken care of by
dividing the cylinder into subcylinders of a fixed size and then
using a covering argument.

\subsection{The compactness argument}

The next lemma extends  Proposition \ref{c:ann2} from harmonic
maps on   $\cC_{-3\ell,3\ell}$ to almost harmonic maps. The main
difference from Proposition \ref{c:c1} is that the cylinder is of
a fixed size in Lemma \ref{p:perturb}.

\begin{Lem}     \label{p:perturb}
Given $\delta > 0$, there exists $\mu > 0$ (depending on $\delta$
and $M$) so that if    $u$ is a $\mu$-almost harmonic  map from
$\cC_{-3\ell,3\ell}$ to $M$ with $\Energy (u) \leq \epsilon_{2}$,
then
\begin{equation}
    \int_{ \cC_{-\ell,\ell} } |u_{\theta}|^2 \leq \delta \,
    \int_{\cC_{-3\ell,3\ell}}
    |\nabla u|^2
        \, .
\end{equation}
\end{Lem}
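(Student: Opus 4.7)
The strategy is a compactness-and-contradiction argument, reducing to the already-established harmonic case of Proposition \ref{c:ann2}. Suppose Lemma \ref{p:perturb} fails: there is a sequence $u^j : \cC_{-3\ell,3\ell} \to M$ of $\mu_j$-almost harmonic maps with $\mu_j \to 0$, $\Energy(u^j) \leq \epsilon_2$, and
\[
\int_{\cC_{-\ell,\ell}} |u^j_\theta|^2 \;>\; \delta\int_{\cC_{-3\ell,3\ell}} |\nabla u^j|^2.
\]
Shrinking $\epsilon_2$ if necessary (which only strengthens the conclusion of Proposition \ref{c:ann2}), I may assume $\epsilon_2$ is below the Sacks--Uhlenbeck small-energy threshold, so that for every ball $B$ in the conformal annulus $\int_B |\nabla u^j|^2 < \epsilon_{SU}$ and the almost-harmonic hypothesis of Definition \ref{d:ah} applies.

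Arguing as in Lemma \ref{l:gl2}: on each such $B$, Definition \ref{d:ah} provides an energy-minimizing $v^j : \tfrac{1}{8}B \to M$ with $\int_{\frac{1}{8}B} |\nabla u^j - \nabla v^j|^2 \leq \mu_j\epsilon_2/2 \to 0$; the small-energy interior estimates (lemma $3.4$ of \cite{SaU}) give uniform $C^{2,\alpha}$ bounds on $\tfrac{1}{9}B$, so a subsequence converges in $C^2$ to a harmonic $v_\infty$, whence $u^j \to v_\infty$ strongly in $W^{1,2}(\tfrac{1}{9}B)$. Covering compact subsets of the open cylinder $(-3\ell,3\ell)\times\SS^1$ by finitely many such ninth-balls and diagonalizing, I extract a subsequence with $u^j \to u$ strongly in $W^{1,2}_{\mathrm{loc}}$, where $u$ is smooth, harmonic, and has energy at most $\epsilon_2$.

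If the limit $u$ is non-constant, Proposition \ref{c:ann2} gives the strict inequality $\int_{\cC_{-\ell,\ell}} |u_\theta|^2 < \delta \int_{\cC_{-2\ell,2\ell}} |\nabla u|^2$, while passing to the limit in the reverse inequality (using strong $W^{1,2}$-convergence on the compactly contained $\cC_{-\ell,\ell}$ and $\cC_{-2\ell,2\ell}$) yields $\int_{\cC_{-\ell,\ell}} |u_\theta|^2 \geq \delta \int_{\cC_{-2\ell,2\ell}} |\nabla u|^2$, contradiction. Otherwise $u \equiv c \in M$; then $\int_{\cC_{-\ell,\ell}} |u^j_\theta|^2 \to 0$ forces $E^j := \int_{\cC_{-3\ell,3\ell}} |\nabla u^j|^2 \to 0$, and I rescale by setting $w^j := (u^j-c)/\sqrt{E^j} : \cC_{-3\ell,3\ell} \to \RR^N$, so that $\int_{\cC_{-3\ell,3\ell}} |\nabla w^j|^2 = 1$ and $\int_{\cC_{-\ell,\ell}} |w^j_\theta|^2 > \delta$. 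The rescaled comparisons $\tilde v^j := (v^j-c)/\sqrt{E^j}$ still satisfy $\int |\nabla w^j - \nabla \tilde v^j|^2 \leq \mu_j/2 \to 0$, while the harmonic map equation $\Delta v^j = A(v^j)(\nabla v^j,\nabla v^j)$ together with the pointwise small-energy gradient bound $|\nabla v^j|^2 \leq CE^j$ yields $|\nabla \tilde v^j|$ uniformly bounded and $|\Delta \tilde v^j| \leq C\sqrt{E^j} \to 0$; standard elliptic regularity for the scalar Laplacian then gives uniform $C^{2,\alpha}$ bounds on slightly smaller balls, so after extracting subsequences and covering as before, $w^j$ converges strongly in $W^{1,2}_{\mathrm{loc}}$ to a componentwise harmonic function $w : (-3\ell,3\ell)\times\SS^1 \to \RR^N$. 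Remark \ref{r:harmonic}---the flat case of Proposition \ref{c:ann2}, valid without any small-energy hypothesis---applied to $w$ yields $\int_{\cC_{-\ell,\ell}} |w_\theta|^2 < \delta \int_{\cC_{-2\ell,2\ell}} |\nabla w|^2 \leq \delta$, contradicting $\int_{\cC_{-\ell,\ell}} |w_\theta|^2 \geq \delta$ obtained from the strong limit.

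The main technical obstacle is the constant-limit case: verifying that the rescaled minimizer comparisons $\tilde v^j$---which take values in $(M-c)/\sqrt{E^j} \subset \RR^N$ rather than in $M$---enjoy uniform elliptic regularity, and that the almost-harmonic structure of $u^j$ passes to the limit as componentwise harmonicity of $w$. The key observation is that the quadratic second-fundamental-form nonlinearity in the harmonic map equation is suppressed by a factor $\sqrt{E^j}\to 0$, linearizing to $\Delta w = 0$ in the limit. A minor bookkeeping issue is arranging a cover of the interior by balls whose eighth-enlargements still lie in the conformal annulus, which is straightforward owing to the $\ell$-margin between $\cC_{-2\ell,2\ell}$ and $\cC_{-3\ell,3\ell}$.
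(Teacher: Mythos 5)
Your proof is correct and follows essentially the same route as the paper: a compactness-and-contradiction argument reducing to Proposition \ref{c:ann2}, with the degenerate case handled by rescaling to unit energy and invoking the harmonic-function case of that proposition (Remark \ref{r:harmonic}). Your case split (constant versus non-constant limit) is just a reorganization of the paper's dichotomy $\limsup\Energy(u^j)>0$ versus $\Energy(u^j)\to 0$, and your explicit linearization of the rescaled comparison maps is the detail behind the paper's phrase that the $M_j$'s converge to an affine space.
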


\begin{proof}
We will argue by contradiction, so suppose that there exists a
sequence $u^j$ of $1/j$-almost harmonic   maps
 from $\cC_{-3\ell,3\ell}$ to $M$ with $\Energy (u^j) \leq
\epsilon_{2}$ and
\begin{equation}    \label{e:badj}
    \int_{ \cC_{-\ell,\ell} } |u^j_{\theta}|^2 > \delta \, \int_{\cC_{-3\ell,3\ell}}
    |\nabla u^j|^2
        \, .
\end{equation}
 We will show that a subsequence of the $u^j$'s converges to a
non-constant harmonic map that contradicts Proposition
\ref{c:ann2}.  We will consider two separate cases,
depending on whether or not $\Energy (u^j)$ goes to $0$.

Suppose first that $\limsup_{j\to \infty} \Energy (u^j) > 0$.
 The upper bound on the energy combined with being
$1/j$-almost harmonic (and the compactness of $M$) allows us to
argue as in Lemma \ref{l:gl2} to get a subsequence that
converges in $W^{1,2}$ on compact subsets of $\cC_{-3\ell,3\ell}$
to a {\underline{non-constant}} harmonic map $\tilde{u}:\cC_{-3\ell,3\ell} \to M$.
Furthermore, using the $W^{1,2}$ convergence on
$\cC_{-\ell,\ell}$ together with the lower semi-continuity of
energy, \eqr{e:badj} implies that
$\int_{ \cC_{-\ell,\ell} } |\tilde{u}_{\theta}|^2 \geq \delta \, \int_{\cC_{-3\ell,3\ell}}
    |\nabla \tilde{u}|^2$.  This contradicts Proposition
\ref{c:ann2}.

Suppose now that   $\Energy (u^j) \to 0$.
        Replacing $u^j$ by $v^j = (u^j - u^j(0))/ (\Energy (u^j))^{1/2}$ gives
 a sequence of maps to $M_j = (M - u^j(0))/\Energy (u^j))^{1/2}$  with $\Energy (v^j)=1$  and, by  \eqr{e:badj},
 $\int_{\cC_{-\ell,\ell}} |v^j_{\theta}|^2 > \delta > 0$.
 Furthermore, the $v^j$'s are also $1/j$-almost harmonic (this property is invariant under dilation), so we can
still argue as in Lemma \ref{l:gl2} to get a subsequence that converges in  $W^{1,2}$ on compact subsets of
$\cC_{-3\ell,3\ell}$ to a harmonic map $v: \SS^2 \to \RR^N$ (we are using here that a subsequence of the $M_j$'s converges to an affine space).  As before,
\eqr{e:badj} implies that
$\int_{ \cC_{-\ell,\ell} } |v_{\theta}|^2 \geq \delta \, \int_{\cC_{-3\ell,3\ell}}
    |\nabla v|^2$. This time our normalization gives $\int_{\cC_{-\ell,\ell}} |v_{\theta}|^2 \geq \delta$ so that
$v$ contradicts Proposition
\ref{c:ann2} (see Remark \ref{r:harmonic}), completing the proof.
\end{proof}

\subsection{The proof of
Proposition \ref{c:c1}}

\begin{proof}
(of Proposition \ref{c:c1}). For each integer  $j = 0, \dots , m$,
let $ \cC(j) = \cC_{-(j+3)\ell,(3-j)\ell}$ and let
 $\mu > 0$ be given by Lemma \ref{p:perturb}.  We will
say that the $j$-th cylinder $\cC(j)$ is {\emph{good}} if the
restriction of $u$ to $\cC(j)$ is $\mu$-almost harmonic;
otherwise, we will say that $\cC(j)$ is {\emph{bad}}.

On each good $\cC(j)$, we apply Lemma \ref{p:perturb}
  to get
\begin{equation}
    \int_{ \cC_{-(j+1)\ell,(1-j)\ell} } |u_{\theta}|^2 \leq \delta \,
    \int_{\cC(j)} |\nabla u|^2 \, ,
\end{equation}
so that summing this over the good $j$'s gives
\begin{equation}    \label{e:cjgood}
    \sum_{j \, {\text{good}} } \, \int_{ \cC_{-(j+1)\ell,(1-j)\ell} } |u_{\theta}|^2 \leq \delta \,
    \sum_{j \, {\text{good}} } \, \int_{\cC(j)} |\nabla u|^2 \leq 6
    \, \delta \, \int_{\cC_{-(m+3)\ell,3\ell}} |\nabla u|^2
    \, ,
\end{equation}
where the last inequality used that each $\cC_{i, i+1}$ is
contained in at most $6$ of the $\cC(j)$'s.

We will complete the proof by showing that the total energy (not
just the $\theta$-energy) on the bad   $\cC(j)$'s is small. By
definition, for each bad   $\cC(j)$, we can choose a finite
 collection of disjoint closed balls $\cB_j$ in $\cC(j)$ so
that if $v: \frac{1}{8} \cB_j \to M$ is any energy-minimizing map
that equals  $u$ on $\partial \frac{1}{8} \cB_j$, then
\begin{equation}    \label{e:defbad}
      \int_{\frac{1}{8} \cB_j} |\nabla u- \nabla v|^2 \geq a_j > \mu \, \int_{   \cC(j) } |\nabla u|^2 \, .
\end{equation}
  Since the interior of each $\cC(j)$ intersects only the
$\cC(k)$'s with $0 < |j-k| \leq 5$, we can  divide the bad
$\cC(j)$'s into ten subcollections so that the interiors of the
$\cC(j)$'s in each subcollection are pair-wise disjoint. In
particular, one of these disjoint subcollections, call it
$\Gamma$, satisfies
\begin{equation}    \label{e:1item}
    \sum_{j \in \Gamma} \, a_j \geq \frac{1}{10} \sum_{j \, {\text{bad}}} \,
    a_j
    \geq \frac{1}{10} \sum_{j \, {\text{bad}}}
      \mu  \, \int_{   \cC(j) } |\nabla u|^2  \, ,
\end{equation}
where the last inequality used  \eqr{e:defbad}.

However, since  $\cup_{j \in \Gamma} \, \cB_j$ is itself a finite
collection of disjoint closed balls in the entire cylinder
$\cC_{-(m+3)\ell,3\ell}$ and
 $u$ is $\nu$-almost
harmonic on $\cC_{-(m+3)\ell,3\ell}$, we get that
\begin{equation}    \label{e:controlbad}
    \frac{\mu}{10 }  \, \sum_{j \, {\text{bad}}}
     \, \int_{   \cC(j) } |\nabla u|^2  \leq  \nu \, \int_{\cC_{-(m+3)\ell,3\ell}} |\nabla u|^2 \, .
\end{equation}
To get the proposition, combine \eqr{e:cjgood} with
\eqr{e:controlbad}  to get
\begin{equation}
      \int_{\cC_{-m\ell,0}} |u_{\theta}|^2
     \leq    \left( 6
    \, \delta   +
\frac{10   \,   \nu}{\mu} \right)
    \, \int_{\cC_{-(m+3)\ell,3\ell}} |\nabla u|^2
    \, .
\end{equation}
Finally, choosing $\nu$ sufficiently small completes the proof.
\end{proof}

\subsection{Bubble compactness}

We will now prove Proposition \ref{p:gl2} using a variation of the
renormalization procedure developed in \cite{PaW} for
pseudo-holomorphic curves and later used in \cite{Pa} for harmonic
maps.  A key point in the proof will be that the uniform energy
bound, (A), and (B) are all dilation invariant, so they apply also
to the compositions of the $u^j$'s with any sequence of conformal
dilations of $\SS^2$.

\begin{proof}
(of Proposition \ref{p:gl2}).   We will use the energy bound and
(B) to show that a subsequence of the $u^j$'s converges in the
sense of (B1), (B2), and (B3) of Definition \ref{d:bubble} to a
collection of harmonic maps. We will then come back and use (A)
and (B) to show that the energy equality (B4) also holds. Hence,
the subsequence bubble converges and, thus by Proposition
\ref{l:bubvar}, also varifold converges.

Set $\delta = 1/21$ and let   $\ell \geq 1$ and $\epsilon_2 > 0$ be
given by Proposition
 \ref{c:ann2}.  Set $\epsilon_3 = \min \{  \epsilon_{0}/2 , \epsilon_2 \}$.

{\underline{Step 1: Initial compactness}}. Lemma \ref{l:gl2} gives
a finite collection of singular points $\cS_0 \subset \SS^2$,  a
harmonic map $v_0 : \SS^2 \to M$, and a subsequence (still denoted
$u^j$) that
  converges to $v_0$ weakly in $W^{1,2}(\SS^2)$ and
 strongly in $W^{1,2}(K)$ for any compact subset $K
\subset \SS^2 \setminus \cS_0$.  Furthermore, the measures
$|\nabla u^j|^2 \, dx$ converge to a measure $\nu_0$ with $\nu_0
(\SS^2) \leq E_0$ and each singular point in $x \in \cS_0$ has
$\nu_0 (x) \geq \epsilon_0$.

{\underline{Step 2: Renormalizing at a singular point}}. Suppose
  that $x \in \cS_0$ is a singular point from the
first step. Fix a radius $\rho > 0$ so that $x$ is the only
singular point in $B_{2\rho}(x)$ and $\int_{B_{\rho}(x)} |\nabla
v_0|^2 < \epsilon_3 / 3$.
   For each
$j$,  let $r_j >0$ be the smallest radius so that
\begin{equation}
    \inf_{y \in B_{\rho - r_j} (x)} \, \, \int_{ B_{\rho}(x) \setminus B_{r_j}(y) } \, |\nabla u^j|^2
    = \epsilon_3 \, ,
\end{equation}
and choose   a  ball $B_{r_j}(y_j) \subset B_{\rho}(x)$   with
$\int_{ B_{\rho}(x) \setminus B_{r_j}(y_j) } \, |\nabla u^j|^2
    = \epsilon_3$.
  Since the $u^j$'s converge to
    $v_0$ on compact subsets of $B_{\rho}(x) \setminus \{
    x \}$, we get that
    $y_j \to x$ and $r_j \to 0$.
   For each $j$, let $\Psi_j:\RR^2 \to \RR^2$ be the
    ``dilation'' that takes $B_{r_j}(y_j)$
    to the unit ball $B_{1}(0) \subset \RR^2$.   By dilation invariance,    the
    dilated maps
$\tilde{u}_1^j   =    u^j \circ \Psi_j^{-1}$ still satisfy (B) and
have the same energy.  Hence, Lemma \ref{l:gl2} gives a
subsequence (still denoted by $\tilde{u}_1^j $), a finite singular
set $\cS_1$,   and a harmonic map $v_1$ so that the
 $\tilde{u}_1^j \circ \Pi$'s converge to $v_1$ weakly in $W^{1,2}(\SS^2)$ and
 strongly in $W^{1,2}(K)$ for any compact subset $K
\subset \SS^2 \setminus \cS_1$.  Moreover, the measures $|\nabla
\tilde{u}_1^j \circ \Pi|^2 \, dx$'s converge to a measure $\nu_1$.

The choice of the balls $B_{r_j}(y_j)$ guarantees that $\nu_1 (
\SS^2 \setminus \{ p^+ \}) \leq \nu_0 (x)$ and $\nu_1 (S^-) \leq
\nu_0 (x) - \epsilon_3$. (Recall that stereographic projection
$\Pi$ takes the open southern hemisphere $S^-$ to the open unit
ball in $\RR^2$.) The key point for iterating this is the
following claim:
\begin{enumerate}
\item[($\star$)] The maximal energy concentration at any $y \in
\cS_1 \setminus \{ p^+ \}$ is at most $\nu_0 (x) - \epsilon_3/3$.
\end{enumerate}
Since the energy at a singular point or the energy for a
non-trivial harmonic map is at least $\epsilon_0 > \epsilon_3$,
the only one way that ($\star$) could possibly fail is if $v_1$ is
constant, $\cS_1$ is exactly two points $p^+$ and $y$, and at most
$\epsilon_3/3$ of $\nu_0 (x)$ escapes at $p^+$.
 However, this would imply that all but at most $2\epsilon_3/3$ of
 the $\int_{B_{\rho}(x)} |\nabla u^j|^2$ is in $B_{t_j}(y_j)$ with
 $\frac{t_j}{r_j} \to 0$ which contradicts the minimality of
 $r_j$.

{\underline{Step 3: Repeating this}}.   We repeat this blowing up
construction at the remaining singular points in $\cS_0$, as well
as each of the singular points $\cS_1$ in the southern hemisphere,
  etc., to get new limiting harmonic maps and new
singular points to blow up at.  It follows from ($\star$) that
  this must terminate after at most $3\,E_0/\epsilon_3$ steps.

{\underline{Step 4: The necks}}.  We have shown that the $u^j$'s
converge to a collection of harmonic maps in the sense of (B1),
(B2), and (B3).  It remains to show (B4), i.e., that
  the $v_k$'s  accounted for all of the
energy in the sequence $u^j$ and no energy was lost in the limit.

To understand how energy could be lost, it is useful to re-examine
what happens to the energy during the blow up process. At each
stage in the blow up process, energy is ``taken from'' a singular
point $x$ and  then goes to one of two places:
\begin{itemize}
\item It can show up in the new limiting harmonic map of to a
singular point in $\SS^2 \setminus \{ p^+ \}$. \item It can
disappear at the north pole $p^+$ (i.e., $\nu_1 (\SS^2 \setminus
\{ p^+ \}) < \nu_0 (x)$).
\end{itemize}
In the first case, the energy is accounted for in the limit or
survives to a later stage.  However, in the second case, the
energy is lost for good, so this is what we must rule out.

We will argue by contradiction, so suppose that $\nu_1 (\SS^2
\setminus \{ p^+ \}) < \nu_0 (x) - \hat{\delta}$ for some
$\hat{\delta} > 0$.  (Note that we must have $\hat{\delta} \leq
\epsilon_3$.)
   Using the notation in Step 1, suppose therefore
that $A_j= B_{s_j}(y_j)\setminus B_{t_j}(y_j)$ are annuli with:
\begin{equation}    \label{e:N1}
   s_j \to 0 \, , \,  \frac{t_j}{r_j} \to \infty \, , {\text{ and }}
    \int_{A_j} |\nabla u_j|^2 \geq \hat{\delta} > 0 \, .
\end{equation}
 There is obviously
quite a bit of freedom in choosing $s_j$ and $t_j$.  In
particular, we can choose a sequence $\lambda_j \to \infty$ so
that the annuli $\tilde{A}_j = B_{\rho/2}(y_j)\setminus
B_{t_j/\lambda_j}(y_j)$ also satisfies this, i.e., $\lambda_j \,
s_j \to 0$ and $t_j / (\lambda_j r_j) \to \infty$.  It follows
from \eqr{e:N1}
 and the definition of the
$r_j$'s that $\int_{\tilde{A}_j} |\nabla u^j|^2 \leq \epsilon_3
\leq \epsilon_2$. However, combining this with Proposition
\ref{c:c1} (with $\delta = 1/21$) shows that the area must be
strictly less than the energy for $j$ large, contradicting (A),
and  thus completing the proof.
\end{proof}

\section{The proof of Theorem \ref{l:trivmap}}  \label{s:appB}

\subsection{An application of the Wente lemma}

The proof of Theorem \ref{l:trivmap} will use the following $L^2$
estimate for $h \, \zeta $ where $\zeta$ is a $L^2 (B_1)$
holomorphic function and $h$ is a $W^{1,2}$ function vanishing on
$\partial B_1$.

\begin{Pro}     \label{c:hardy}
If $\zeta$ is a holomorphic function on $B_1 \subset \RR^2$ and $h
\in  W^{1,2}_0 (B_1)$, then
\begin{equation}    \label{e:1a}
    \int_{B_1} h^2 \, |\zeta|^2 \leq 8 \, \left( \int_{B_1}
    |\nabla h|^2 \right) \, \left( \int_{B_1}
    |\zeta|^2 \right) \, .
\end{equation}
\end{Pro}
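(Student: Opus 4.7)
The plan is to realize $|\zeta|^2$ as a Jacobian of two harmonic functions, apply Wente's lemma to the associated Poisson problem, and then bootstrap via a double integration by parts. Since $B_1$ is simply connected, $\zeta$ admits a holomorphic antiderivative $g = p + iq$ on $B_1$, and the Cauchy--Riemann equations give
$$|\zeta|^2 = |g'|^2 = p_x^2 + p_y^2 = p_x q_y - p_y q_x = J(p,q),$$
with $\|\nabla p\|_{L^2} = \|\nabla q\|_{L^2} = \|\zeta\|_{L^2}$. Let $\phi \in W^{1,2}_0(B_1)$ solve $\Delta \phi = |\zeta|^2$ with $\phi|_{\partial B_1} = 0$; since $|\zeta|^2 \geq 0$, the maximum principle forces $\phi \leq 0$ on $B_1$, and Wente's lemma (applied to the Jacobian structure $J(p,q)$) delivers the crucial $L^\infty$ bound $\|\phi\|_{L^\infty} \leq \|\nabla p\|_{L^2} \|\nabla q\|_{L^2} = \|\zeta\|_{L^2}^2$. (The sharp Wente constant is $1/(2\pi)$, so the bound by $1$ is loose but more than enough.)

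Next I would work with $h \in C_c^\infty(B_1)$, the general $W^{1,2}_0$ case following by density. Writing $A := \int_{B_1} h^2 |\nabla \phi|^2$ and $B := \int_{B_1} h^2 |\zeta|^2$, the first integration by parts against the equation $\Delta\phi = |\zeta|^2$ yields
$$B = \int_{B_1} h^2 \Delta\phi = -2\int_{B_1} h\,\nabla h \cdot \nabla \phi,$$
so by Cauchy--Schwarz $B \leq 2\|\nabla h\|_{L^2} A^{1/2}$. For the second bound, integrate by parts once more:
$$A = -\int_{B_1} \phi\,\dv(h^2 \nabla\phi) = -2\int_{B_1} \phi h\,\nabla h \cdot \nabla\phi - \int_{B_1} \phi h^2 |\zeta|^2.$$
Using $\phi \leq 0$ gives $-\int \phi h^2 |\zeta|^2 \leq \|\phi\|_{L^\infty} B$, while Cauchy--Schwarz controls the first term by $2\|\phi\|_{L^\infty}\|\nabla h\|_{L^2} A^{1/2}$. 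Thus $A \leq 2\|\phi\|_{L^\infty}\|\nabla h\|_{L^2} A^{1/2} + \|\phi\|_{L^\infty} B$. Feeding in $B \leq 2\|\nabla h\|_{L^2} A^{1/2}$ leaves
$$A \leq 4\|\phi\|_{L^\infty}\|\nabla h\|_{L^2} A^{1/2}, \quad \text{i.e.,} \quad A^{1/2} \leq 4\|\phi\|_{L^\infty}\|\nabla h\|_{L^2}.$$
Plugging back,
$$B \leq 2\|\nabla h\|_{L^2}\cdot 4\|\phi\|_{L^\infty}\|\nabla h\|_{L^2} = 8\|\phi\|_{L^\infty}\|\nabla h\|_{L^2}^2 \leq 8\|\zeta\|_{L^2}^2\|\nabla h\|_{L^2}^2,$$
which is exactly \eqref{e:1a}.

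The main obstacle is Wente's lemma itself, which must be used as a black box (or proved separately with an admissible constant); everything else is an orchestration of two integrations by parts, Cauchy--Schwarz, and the sign of $\phi$. As a sanity check, one could avoid Wente by exploiting the subharmonicity of $|\zeta|^2$ (from $\Delta |\zeta|^2 = 4|\zeta'|^2 \geq 0$) to obtain the pointwise bound $|\zeta(x)|^2 \leq \|\zeta\|_{L^2}^2/[\pi(1-|x|)^2]$ via the submean inequality on the ball $B_{1-|x|}(x)$, and then apply the disk Hardy inequality $\int h^2/(1-|x|)^2 \leq 4\int|\nabla h|^2$ (provable by the ground-state substitution $h = \sqrt{1-|x|}\, w$); this route yields the stronger constant $4/\pi$ but bypasses the Wente structure emphasized in the section title.
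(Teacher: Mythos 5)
Your proof is correct and follows essentially the same route as the paper: you realize $|\zeta|^2$ as the Jacobian of the real and imaginary parts of a holomorphic antiderivative, invoke Wente's lemma for the $C^0$ bound on the solution of $\Delta\phi=|\zeta|^2$ with zero boundary values, and run the same two integrations by parts (against $\dv(h^2\nabla\phi)$ and $\dv(h^2\phi\nabla\phi)$) with Cauchy--Schwarz to arrive at the constant $8$; your use of the sign of $\phi$ in place of $|\phi|$ is an inessential variant. The alternative you sketch via subharmonicity of $|\zeta|^2$ and the Hardy inequality on the disk is also valid and even improves the constant, but it is only a side remark and the main argument coincides with the paper's.
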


The estimate \eqr{e:1a} does not follow from the Sobolev embedding
theorem as  the product of functions in $L^2$ and $W^{1,2}$ is in
$L^p$ for $p< 2$, but not necessarily for $p=2$. To get around
this, we will use the following lemma of H. Wente (see  \cite{W};
cf. theorem $3.1.2$ in \cite{He1}).

\begin{Lem}     \label{l:wente}
If $B_1 \subset \RR^2$ and $u,v \in W^{1,2} (B_1)$, then there
exists  $\phi \in C^0 \cap W^{1,2}_0(B_1)$ with $\Delta \phi =
\langle (\partial_{x_1} \, u, \partial_{x_2} \,  u) , (-
\partial_{x_2} \,v ,  \partial_{x_1} \, v) \rangle$ so that
\begin{equation}
    ||\phi||_{C^0} + ||\nabla \phi ||_{L^2} \leq   ||\nabla u||_{L^2} \,
        ||\nabla v||_{L^2} \, .
\end{equation}
\end{Lem}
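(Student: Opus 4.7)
The plan is to prove this classical Wente estimate by smooth approximation followed by a duality argument that exploits the null-form structure of the right-hand side. By density of $C^\infty(\overline{B_1})$ in $W^{1,2}(B_1)$, it suffices to treat the case where $u$ and $v$ are smooth up to the boundary, since both sides of the desired inequality are continuous under $W^{1,2}$ approximation and weak compactness extracts the limiting $\phi$. For smooth data, the Poisson problem $\Delta\phi = f$ with $f := \partial_{x_2}u\,\partial_{x_1}v - \partial_{x_1}u\,\partial_{x_2}v$ and $\phi|_{\partial B_1}=0$ has a unique smooth solution by classical elliptic theory; the content of the lemma is the precise bound, which does not see any higher smoothness norms.

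The key algebraic input is the null-form identity
\[
    f \;=\; \partial_{x_2}(u\,\partial_{x_1}v) - \partial_{x_1}(u\,\partial_{x_2}v) \;=\; \text{div}\bigl(u\,\nabla^{\perp}v\bigr),
\]
which expresses $f$ as the divergence of a merely $L^1$ vector field, where $\nabla^{\perp}v := (\partial_{x_2}v, -\partial_{x_1}v)$ is itself pointwise divergence-free. For the $W^{1,2}_0$ part of the bound I would test against $\phi$, integrate by parts, and obtain $\int |\nabla\phi|^2 = \int \langle \nabla\phi,\, u\,\nabla^{\perp}v\rangle$. The pairing of a gradient field with a divergence-free field is the prototypical compensated-compactness setting; one then invokes the Coifman--Lions--Meyer--Semmes theorem that the Jacobian $\{u,v\}$ lies in the real Hardy space $\mathcal{H}^{1}(\RR^{2})$ with $\|\{u,v\}\|_{\mathcal{H}^{1}} \le C\,\|\nabla u\|_{L^{2}}\,\|\nabla v\|_{L^{2}}$, combined with the boundedness of $\Delta^{-1}: \mathcal{H}^{1}\to W^{1,2}_{0}$.

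For the $C^0$ part I would represent $\phi$ through the Dirichlet Green's function $G(x,y)$ of $B_1$ as
\[
    \phi(x) \;=\; -\int_{B_1}\langle\nabla_y G(x,y),\, u(y)\,\nabla^{\perp}v(y)\rangle\,dy,
\]
with the boundary contribution vanishing because $G(x,\cdot)|_{\partial B_{1}}=0$. Passing to polar coordinates centered at $x$ and using that $G(x,\cdot)$ admits a (multi-valued) harmonic conjugate on $B_1\setminus\{x\}$ whose differential is the Hodge dual of $d_yG(x,\cdot)$, one rewrites the inner integrand as a genuine pairing of two $W^{1,2}$ gradients and estimates ring by ring with Cauchy--Schwarz, yielding $\|\phi\|_{C^0} \le C\,\|\nabla u\|_{L^{2}}\,\|\nabla v\|_{L^{2}}$. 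Equivalently, this reflects $\mathcal{H}^{1}$--$\text{BMO}$ duality together with the logarithmic nature of the two-dimensional Green's function.

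The main technical obstacle is tracking the sharp constant $1$ stated in the combined bound: a naive Calder\'on--Zygmund argument delivers only an unspecified dimensional constant. The clean form requires either the sharp CLMS inequality with explicit norms on $\mathcal{H}^{1}$, or a direct computation with the explicit disc Green's function in the spirit of Wente's original proof, followed by a careful summation of the two estimates above; neither step presents any new analytic difficulty, only bookkeeping.
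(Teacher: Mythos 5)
The paper never proves Lemma \ref{l:wente} --- it is quoted from the literature, with references to \cite{W} and to theorem $3.1.2$ of \cite{He1} --- so the relevant comparison is with those standard proofs, and your sketch is essentially an outline of them. The architecture is sound, but there is one genuine gap as written: the inequality to be proved has constant exactly $1$, while the machinery you actually invoke (Coifman--Lions--Meyer--Semmes together with $\mathcal{H}^1$--BMO duality, or Calder\'on--Zygmund theory) yields only an unspecified absolute constant; you acknowledge this and defer it to ``bookkeeping'', but without that step the stated estimate is not established. The classical route does close it, and it is worth recording how. Since the Dirichlet energies, the Jacobian $\{u,v\}=\partial_{x_2}u\,\partial_{x_1}v-\partial_{x_1}u\,\partial_{x_2}v$, and the zero boundary condition are all preserved under M\"obius transformations of the disc, it suffices to bound $|\phi(0)|$, where (for smooth data) $\phi(0)=\frac{1}{2\pi}\int_{B_1}\log|y|\,\{u,v\}\,dy$. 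Writing $F(r)=\int_{B_r}\{u,v\}$, so that by Stokes' theorem $|F(r)|=\bigl|\int_{\partial B_r}(u-\bar{u}_r)\,\partial_{\tau}v\bigr|$, integrating by parts in $r$, and using Wirtinger's inequality on each circle followed by Cauchy--Schwarz in $r$ gives the classical bound $\|\phi\|_{C^0}\leq\frac{1}{2\pi}\,\|\nabla u\|_{L^2}\|\nabla v\|_{L^2}$. For the energy you then need no Hardy space input at all: testing the equation with $\phi$ gives $\int_{B_1}|\nabla\phi|^2\leq\|\phi\|_{C^0}\,\|\{u,v\}\|_{L^1}\leq\frac{1}{2\pi}\left(\|\nabla u\|_{L^2}\|\nabla v\|_{L^2}\right)^2$, hence $\|\nabla\phi\|_{L^2}\leq(2\pi)^{-1/2}\,\|\nabla u\|_{L^2}\|\nabla v\|_{L^2}$, and since $(2\pi)^{-1}+(2\pi)^{-1/2}<1$ the lemma follows with the stated constant.

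Two smaller repairs. First, your sup-norm argument at a general point $x$ via ``polar coordinates centered at $x$'' and a harmonic conjugate of $G(x,\cdot)$ does not quite work as stated: for $x\neq 0$ the level sets of $G(x,\cdot)$ are not circles centered at $x$, and the ring-by-ring Wirtinger estimate needs the circles to be the level sets of the kernel; the M\"obius reduction above is the clean fix, and is exactly how \cite{He1} handles it. Second, in the reduction to smooth $u,v$, weak compactness alone does not show that the limiting $\phi$ is continuous; instead use bilinearity, $\{u_j,v_j\}-\{u_k,v_k\}=\{u_j-u_k,v_j\}+\{u_k,v_j-v_k\}$, and apply the smooth-case estimate to differences, so that the approximating $\phi_j$ are Cauchy in $C^0\cap W^{1,2}_0$, then pass to the limit in the equation using $\{u_j,v_j\}\to\{u,v\}$ in $L^1$. (Also a harmless sign slip: with your convention $\nabla^{\perp}v=(\partial_{x_2}v,-\partial_{x_1}v)$ one has $\{u,v\}=-\,\mathrm{div}(u\,\nabla^{\perp}v)$.) With these changes your argument is complete and is essentially the proof of theorem $3.1.2$ in \cite{He1}, which is what the paper cites.
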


\begin{proof}
(of Proposition \ref{c:hardy}.) Let $f$ and $g$ be the real and
imaginary parts, respectively, of the holomorphic function
$\zeta$, so that the Cauchy-Riemann equations give
\begin{equation}    \label{e:cr2}
    \partial_{x_1} f = \partial_{x_2} g {\text{ and }} \partial_{x_2} f = - \partial_{x_1} g \, .
\end{equation}
Since $B_1$ is simply connected, \eqr{e:cr2} gives  functions $u$
and $v$ on $B_1$ with $\nabla u = (g,f)$ and $\nabla v = (f,-g)$.
We have
\begin{equation}    \label{e:ns}
    |\nabla u|^2 = |\nabla v|^2 = \langle (\partial_{x_1} u, \partial_{x_2} u) , (- \partial_{x_2} v ,
\partial_{x_1} v) \rangle = |\zeta|^2 \, .
\end{equation}
Therefore, Lemma \ref{l:wente} gives $\phi$ with $\Delta \phi =
|\zeta|^2$, $\phi |_{\partial B_1} = 0$, and
\begin{equation}    \label{e:phib}
    ||\phi||_{C^0} + ||\nabla \phi ||_{L^2} \leq   \int |\zeta|^2  \, .
\end{equation}
Applying Stokes' theorem to $\dv (h^2 \nabla \phi)$ and using
Cauchy-Schwarz gives
\begin{equation}    \label{e:st1}
    \int h^2 |\zeta|^2 = \int h^2 \Delta \phi \leq \int |\nabla
    h^2| \, |\nabla \phi| \leq 2 \,
    ||\nabla h||_{L^2} \, \left( \int h^2 \, |\nabla \phi|^2 \right)^{1/2} \, .
\end{equation}
Applying Stokes' theorem to $\dv (h^2 \phi \nabla \phi)$, noting
that $\Delta \phi \geq 0$, and using \eqr{e:st1} gives
\begin{equation}    \label{e:st2}
    \int h^2 |\nabla \phi |^2 \leq \int   |\phi| \left(  h^2 \,  \Delta \phi  +
      |\nabla h^2| \, |\nabla \phi| \right) \leq 4 \, ||\phi||_{C^0}
      \, ||\nabla h||_{L^2} \,
        \left( \int h^2 \, |\nabla \phi|^2 \right)^{1/2}
 \, ,
\end{equation}
so that     $\left( \int h^2 |\nabla \phi |^2 \right)^{1/2} \leq 4
\, ||\nabla h||_{L^2} \, ||\phi||_{C^0}$. Finally, substituting
this bound back into \eqr{e:st1} and using \eqr{e:phib} to bound
$||\phi||_{C^0}$ gives the proposition.
\end{proof}

\subsection{An application to harmonic maps}  \label{s:he}

\begin{Pro}     \label{p:harmhardy}
Suppose that $M\subset \RR^N$ is a smooth closed isometrically
embedded manifold.  There exists a constant $\epsilon_0 > 0$
(depending on $M$) so that if
  $v:B_1 \to M$ is a $W^{1,2}$ weakly harmonic map with
energy at most $ \epsilon_0$,  then $v$ is
  a smooth harmonic map.
In addition, for any $h \in W_0^{1,2}(B_1)$, we have
\begin{equation}    \label{e:1b}
    \int_{B_1} |h|^2 \, |\nabla v|^2 \leq C \, \left( \int_{B_1}
    |\nabla h|^2 \right) \, \left( \int_{B_1}
    |\nabla v|^2 \right) \, .
\end{equation}
\end{Pro}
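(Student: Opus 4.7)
The proposition comprises two assertions, smoothness of $v$ and the weighted $L^2$ estimate \eqr{e:1b}; I plan to establish them in this order.

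For smoothness, I would invoke $\epsilon$-regularity for $W^{1,2}$ weakly harmonic maps from a $2$-disk into a closed target: there exists a constant $\epsilon_H = \epsilon_H(M) > 0$ so that $\Energy(v) \leq \epsilon_H$ forces $v$ to be smooth. This is the main regularity theorem of H\'elein \cite{He1}. I would therefore fix $\epsilon_0 \leq \min\{\epsilon_H, \epsilon_{SU}\}$, so that $v$ is automatically smooth (and classically harmonic).

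For the inequality \eqr{e:1b}, the plan is to combine two classical tools applied to the now-smooth map $v$. The first is the Sacks-Uhlenbeck small-energy interior gradient bound (lemma $3.4$ of \cite{SaU}): whenever $B_r(x) \subset B_1$ and $\int_{B_r(x)} |\nabla v|^2 \leq \epsilon_{SU}$, one has $|\nabla v(x)|^2 \leq C_{SU}\,r^{-2} \int_{B_r(x)}|\nabla v|^2$. Letting $w(x)=\dist(x,\partial B_1)$ and taking $r=w(x)/2$, the global bound $\Energy(v) \leq \epsilon_0 \leq \epsilon_{SU}$ verifies the hypothesis and yields the pointwise estimate $|\nabla v(x)|^2 \leq 4\,C_{SU}\,w(x)^{-2}\int_{B_1}|\nabla v|^2$. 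The second tool is the classical Hardy inequality on $B_1$ for $h \in W_0^{1,2}(B_1)$, namely $\int_{B_1} h^2/w^2 \leq C_H \int_{B_1}|\nabla h|^2$, valid on smooth bounded domains in any dimension $n \geq 2$. Multiplying the pointwise bound by $h^2$, integrating, and applying Hardy gives
\[
\int_{B_1}h^2\,|\nabla v|^2 \;\leq\; 4\,C_{SU}\,\left(\int_{B_1}|\nabla v|^2\right)\int_{B_1}\frac{h^2}{w^2} \;\leq\; 4\,C_{SU}\,C_H\,\left(\int_{B_1}|\nabla h|^2\right)\left(\int_{B_1}|\nabla v|^2\right),
\]
which is exactly \eqr{e:1b}.

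The main potential obstacle lies in the smoothness step, which is H\'elein's deep regularity theorem and cannot be avoided by elementary means; once smoothness is in hand, the inequality follows by the routine combination above. An alternative, more self-contained route staying within the Wente/Hopf-differential toolkit of the appendix uses the fact that the Hopf differential $\zeta=(|v_{x_1}|^2-|v_{x_2}|^2)-2i\langle v_{x_1},v_{x_2}\rangle$ is holomorphic when $v$ is harmonic, and applies Proposition \ref{c:hardy} to $\zeta$ combined with integration by parts in the harmonic map equation $\Delta v = A(v)(\nabla v,\nabla v)$; this route would make the role of Proposition \ref{c:hardy} as a stepping stone transparent, at the cost of having to pass from the quartic estimate natural to $\zeta$ to a quadratic one in $|\nabla v|$ via the small energy hypothesis.
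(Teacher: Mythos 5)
Your argument is correct, but it proves \eqr{e:1b} by a genuinely different (and more elementary) route than the paper. The paper also gets smoothness from H\'elein's theorem, but for the estimate it stays inside the Wente/moving-frames circle of ideas: following \cite{He1}, one takes a Coulomb frame for $v^{\ast}(TM)$, writes $\nabla v$ in that frame as $\alpha$, and uses H\'elein's bounded matrix $\beta$ with $\partial_{\bar z}(\beta^{-1}\alpha)=0$ to produce holomorphic functions $\zeta$ with $C^{-2}|\zeta|^2\leq |\nabla v|^2\leq C^2|\zeta|^2$; then \eqr{e:1b} is exactly Proposition \ref{c:hardy} applied to $\zeta$. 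Your route instead combines the Sacks--Uhlenbeck small-energy interior gradient bound, $|\nabla v(x)|^2\leq C\,\dist(x,\partial B_1)^{-2}\Energy(v)$, with the boundary-distance Hardy inequality on the disk (which, unlike the point-singularity Hardy inequality, is valid in dimension two and follows from a one-dimensional radial computation plus Poincar\'e near the center). This bypasses Proposition \ref{c:hardy} and the Coulomb gauge entirely, which is a real simplification; the trade-off is that your proof uses the smallness of the energy in an essential way through $\epsilon$-regularity, whereas the frame/Wente argument needs only an energy bound (the constant there depends on that bound), which is the structural content the appendix is set up to exhibit. Both proofs rely on H\'elein for the smoothness statement, so neither is self-contained on that point.

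One caution about your closing aside: the ``alternative route'' via the Hopf differential does not work as described. The Hopf differential $\phi=(|v_{x_1}|^2-|v_{x_2}|^2)-2i\langle v_{x_1},v_{x_2}\rangle$ is indeed holomorphic for harmonic $v$, but it vanishes identically when $v$ is conformal while $|\nabla v|^2$ need not be small, so $|\phi|$ cannot control $|\nabla v|^2$ and Proposition \ref{c:hardy} applied to $\phi$ does not yield \eqr{e:1b}; the holomorphic object that does control $|\nabla v|^2$ is H\'elein's $\beta^{-1}\alpha$, not the Hopf differential. Since that aside is not needed for your main argument, the proof as proposed stands.
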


\begin{proof}
The first claim follows immediately from F. H\'elein's 1991
regularity theorem for weakly harmonic maps from surfaces; see
\cite{He2} or theorem $4.1.1$ in \cite{He1}.

We will show that   \eqr{e:1b} follows by combining estimates from
the proof of theorem $4.1.1$ in
\cite{He1}{\footnote{Alternatively, one could use the recent
results of T. Rivi\`ere, \cite{Ri}.}} with Proposition
\ref{c:hardy}. Following \cite{He1}, we can assume that   the
pull-back $v^{\ast}(TM)$ of the tangent bundle of $M$ has
orthonormal frames on $B_1$ and, moreover, that there is a finite
energy harmonic section $e_1 , \dots , e_n$   of the bundle of
orthonormal frames for $v^{\ast}(TM)$ (the frame $e_1 , \dots ,
e_n$ is usually called a {\emph{Coulomb gauge}}). Set $\alpha^j =
\langle
\partial_{x_1} v , e_j \rangle - i \, \langle
\partial_{x_2} v , e_j
    \rangle$
for $j= 1 , \dots , n$.  Since $e_1 , \dots , e_n$ is an
orthonormal frame for $v^{\ast}(TM)$,  we have
\begin{equation}    \label{e:alp}
    |\nabla v|^2 = \sum_{j=1}^n |\alpha^j|^2 \, .
\end{equation}
On pages 181 and 182 of \cite{He1}, H\'elein uses that the frame
$e_1 , \dots , e_n$ is harmonic to construct an $n\times n$
matrix-valued function $\beta$ (i.e., a map
    $\beta : B_1 \to GL(n,\CC)$) with $|\beta| \leq C$, $|\beta^{-1}| \leq C$, and with
$
    \partial_{\bar{z}} \, \left( \beta^{-1} \, \alpha \right) = 0
$ (where the constant $C$ depends only on $M$ and the bound for
the energy of $v$; see also lemma $3$ on page $461$ in \cite{Q}
where this is also stated). In particular, we get an $n$-tuple of
holomorphic functions $(\zeta^1 , \dots , \zeta^n) = \zeta =
\beta^{-1} \, \alpha$, so that
\begin{equation}    \label{e:holo2}
     C^{-2} \,|\zeta|^2 \leq  |\alpha|^2   =  |\beta \, \zeta|^2 \leq C^2 \, |\zeta|^2
    \, .
\end{equation}
 The claim \eqr{e:1b} now follows from Proposition \ref{c:hardy}.
 Namely, using \eqr{e:alp}, the second inequality in
 \eqr{e:holo2}, and then applying Proposition \ref{c:hardy} to the
 $n$ holomorphic functions $\zeta^1 , \dots , \zeta^n$ gives
\begin{equation}
    \int  |h|^2 \, |\nabla v|^2
    \leq C^2 \, \int |h|^2 \, |\zeta|^2
    \leq 8 \, C^2 \,  \int
    |\nabla h|^2  \,   \int
    |\zeta|^2
    \leq 8 \, C^4 \,   \int
    |\nabla h|^2   \,   \int
    |\nabla v|^2   \, ,
\end{equation}
where the last inequality used the first inequality in
\eqr{e:holo2} and \eqr{e:alp}.
\end{proof}

\subsection{The proof of Theorem \ref{l:trivmap}}  \label{s:B}

\begin{proof}
(of Theorem \ref{l:trivmap}.)
 Use Stokes' theorem and   that $u$ and $v$
are equal on $\partial B_1$ to get
\begin{equation}
    \int  |\nabla u|^2 - \int  |\nabla v|^2 -
     \int  \left| \nabla (u - v) \right|^2
    =   - 2 \, \int   \langle (u - v)  ,  \Delta v \rangle \equiv \Psi \, .
\end{equation}
To show \eqr{e:trivmap}, it suffices to bound $|\Psi|$ by
$\frac{1}{2} \,
        \int  \left| \nabla v - \nabla u \right|^2$.

 The harmonic
map equation \eqr{e:footnote} implies that $\Delta v$ is
perpendicular to $M$ and
\begin{equation}    \label{e:hmeq}
    |\Delta v| \leq  |\nabla v|^2 \, \sup_M |A|    \, .
\end{equation}

We will need the elementary geometric fact that there exists a
constant $C$ depending on $M$ so that whenever $x,y \in M$, then
\begin{equation}    \label{e:almosttan}
    \left| (x-y)^N \right| \leq C \, |x-y|^2 \, ,
\end{equation}
where $(x-y)^N$ denotes the normal part of the vector $(x-y)$ at
the point $x \in M$ (the same bound holds at $y$ by symmetry). The
point is that either $|x-y| \geq 1/C$   so \eqr{e:almosttan} holds
trivially or the vector $(x-y)$ is ``almost tangent'' to $M$.

Using that $u$ and $v$ both map to   $M$, we can apply
\eqr{e:almosttan} to get
$\left| (u-v)^N \right| \leq C \, |u-v|^2$,
where the normal projection is at the point $v(x) \in M$.
 Putting all of this
together gives
\begin{equation}    \label{e:gotpsi}
    \left| \Psi \right| \leq C \, \int  |v-u|^2 \, |\nabla
    v|^2 \, ,
\end{equation}
where $C$ depends on $\sup_M |A|$.  As long as $\epsilon_1$ is
less than $\epsilon_0$, we can apply Proposition \ref{p:harmhardy}
with $h=|u-v|$ to get
\begin{equation}    \label{e:fromhardy}
    \int  |v-u|^2 \, |\nabla
    v|^2 \leq C' \, \left( \int
    |\nabla |u-v||^2 \right) \, \left( \int
    |\nabla v|^2 \right) \leq C' \, \epsilon_1 \, \int
    |\nabla u- \nabla v|^2
    \, .
\end{equation}
The lemma follows by combining \eqr{e:gotpsi} and
\eqr{e:fromhardy} and then taking $\epsilon_1$ sufficiently small.
\end{proof}

Combining Corollary \ref{c:trivmap1} and the regularity theory of
\cite{Mo1}, or \cite{SU1}, for energy minimizing maps   recovers
H\'elein's theorem that weakly harmonic maps from surfaces are
smooth.  Note, however, that we used estimates from \cite{He1} in
the proof of Theorem \ref{l:trivmap}.

\section{The equivalence of energy and area}    \label{s:eq}

By \eqr{e:tr1},    Proposition \ref{p:eq} follows once we show
that $W_E \leq W_A$. The corresponding result for the Plateau
problem is proven by taking a minimizing sequence for area and
reparametrizing   to make these maps conformal, i.e., choosing
isothermal coordinates. There are a few technical difficulties in
carrying this out since the pull-back metric may be degenerate and
is only in $L^1$, while the existence of isothermal coordinates
requires that the induced metric be positive and bounded; see,
e.g., proposition $5.4$ in \cite{SW}. We will follow the same
approach here, the difference is that we need  the
reparametrizations to vary continuously with $t$.

\subsection{Density of smooth mappings}

The next lemma   observes that the regularization using
convolution of Schoen-Uhlenbeck in the proposition in section $4$
of \cite{SU2} is continuous.

\begin{Lem}     \label{l:density}
Given $\gamma \in \Omega$ and $\epsilon > 0$, there exists a
regularization $\tilde{\gamma} \in \Omega_{\gamma}$ so that
\begin{equation}    \label{e:dense}
    \max_t \, \, || \tilde{\gamma} (\cdot , t) -  {\gamma} (\cdot ,
    t)||_{W^{1,2}} \leq \epsilon \, ,
\end{equation}
 each
slice $\tilde{\gamma} (\cdot , t)$ is $ C^{2}$, and the map $t \to
\tilde{\gamma} (\cdot , t)$ is continuous from $[0,1]$ to $C^2
(\SS^2 , M)$.
\end{Lem}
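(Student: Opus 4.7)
The plan is to regularize each slice by a one-parameter family of smoothing operators $T_\sigma$ that depends continuously on both the parameter and the input map, and then project back to $M$. A convenient choice is $T_\sigma = e^{\sigma\Delta}$, the heat semigroup on $\SS^2$ acting componentwise on $\RR^N$-valued maps; alternatively one may use the local mollification of Schoen-Uhlenbeck from \cite{SU2} after isometrically embedding $\SS^2 \subset \RR^3$. I would isometrically embed $M \subset \RR^N$ and let $\Pi : M_\delta \to M$ be the smooth nearest-point projection on a $\delta$-tubular neighborhood. The candidate regularization is $\tilde{\gamma}(\cdot,t) = \Pi \circ T_\sigma \gamma(\cdot,t)$ for a sufficiently small $\sigma > 0$.

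The standard properties of $T_\sigma$ I would invoke are: $T_\sigma$ is linear with operator norm at most one on both $C^0$ and $W^{1,2}$, preserves $\RR^N$-valued constants, maps $C^0 \cap W^{1,2}$ into $C^\infty$ for each $\sigma > 0$, is bounded from $C^0$ into $C^k$ for each fixed $\sigma > 0$ and each $k$, and satisfies $T_\sigma u \to u$ in $C^0 \cap W^{1,2}$ as $\sigma \to 0^+$. Because $t \mapsto \gamma(\cdot,t)$ is continuous from $[0,1]$ into $C^0 \cap W^{1,2}$, its image is a compact subset of $C^0 \cap W^{1,2}$, so the convergence above is uniform on this compact set:
\[ \max_{t \in [0,1]} \| T_\sigma \gamma(\cdot,t) - \gamma(\cdot,t) \|_{C^0 \cap W^{1,2}} \longrightarrow 0 \quad \text{as } \sigma \to 0. \]
I would then fix $\sigma$ small enough that this maximum is less than both $\delta$ and $\epsilon / C$, where $C$ is a Lipschitz bound for $\Pi$ on $M_\delta$. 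This ensures $T_\sigma \gamma(\cdot,t)$ takes values in $M_\delta$ for every $t$, so $\tilde{\gamma}(\cdot,t)$ is well-defined with values in $M$, each slice is $C^\infty$ and hence $C^2$, and the estimate \eqref{e:dense} follows from $\Pi|_M = \mathrm{id}$ together with the chain rule.

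The continuity of $t \mapsto \tilde{\gamma}(\cdot,t)$ in $C^2$ comes from the $C^0$-continuity of $t \mapsto \gamma(\cdot,t)$, the $C^0 \to C^k$ boundedness of $T_\sigma$ at fixed $\sigma$, and the smoothness of $\Pi$. To confirm $\tilde{\gamma} \in \Omega_\gamma$ I would use the explicit homotopy $H(x,t,s) = \Pi(T_{s\sigma} \gamma(\cdot,t)(x))$ for $s \in [0,1]$: it is jointly continuous into $M$, each slice lies in $C^0 \cap W^{1,2}$ with continuous dependence on $t$, it reduces to $\gamma$ at $s=0$ (since $T_0$ is the identity and $\Pi|_M = \mathrm{id}$) and to $\tilde{\gamma}$ at $s=1$, and the boundary constants at $t = 0, 1$ are preserved since the heat semigroup fixes $\RR^N$-valued constants and $\Pi$ fixes points of $M$. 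The main obstacle is precisely this uniformity in $t$: without the compactness of $\{\gamma(\cdot,t) : t \in [0,1]\}$ in $C^0 \cap W^{1,2}$, one could not choose a single smoothing scale $\sigma$ that simultaneously handles every slice, and the $C^2$-continuity in $t$ would break down.
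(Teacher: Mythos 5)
Your proposal is correct and follows essentially the same strategy as the paper: smooth each slice with a fixed smoothing operator chosen uniformly in $t$ (using compactness of $\{\gamma(\cdot ,t)\}$ in $C^0\cap W^{1,2}$), project back to $M$ with the nearest-point projection $\Pi$, verify $C^2$-continuity in $t$ from the $C^0\to C^k$ boundedness of the smoother, and homotope back to $\gamma$ by shrinking the smoothing parameter, noting that constants (hence the point maps at $t=0,1$) are preserved. The only difference is the choice of smoothing device -- the heat semigroup $e^{\sigma \Delta}$ on $\SS^2$ rather than the paper's Schoen--Uhlenbeck convolution $\int \phi_r(y)\,\gamma(T_y(x),t)\,dy$ over the reparametrizations $T_y(x)=(x-y)/|x-y|$ -- and you yourself list that mollification as an equivalent alternative, so the two arguments are interchangeable.
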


\begin{proof}
Since $M$
is smooth, compact and embedded, there exists a $\delta
> 0$ so that for each $x$ in the $\delta$-tubular neighborhood $M_{\delta}$
of $M$ in $\RR^N$, there is a unique closest
point $\Pi (x) \in M$ and  so the map $x \to \Pi(x)$ is smooth.
  $\Pi$ is called
{\emph{nearest point projection}}.

Given $y$ in the open ball $B_1 (0) \subset \RR^3$, define  $T_y
:\SS^2 \to \SS^2$ by $T_y(x) = \frac{x -y}{|x - y|}$.  Since each
$T_y$ is smooth and these maps depend smoothly on $y$, it follows
that the map $(y,f) \to f \circ T_y$ is continuous from $B_1(0)
\times C^0 \cap W^{1,2}(\SS^2 , \RR^N) \to C^0 \cap W^{1,2}(\SS^2
, \RR^N)$ (this is clear for $f \in C^1$ and follows for $C^0 \cap
W^{1,2}$ by density). Therefore, since $T_0$ is the identity,
given $f \in C^0 \cap W^{1,2}(\SS^2 , \RR^N)$ and $\mu > 0$, there
exists $r>0$ so that $  \sup_{|y| \leq r} \, \, ||f\circ T_y -
f||_{C^0 \cap W^{1,2}} < \mu $. Applying this to $\gamma (\cdot ,
t)$ for each $t$ and using that $t \to \gamma (\cdot , t)$ is
continuous to $C^0 \cap W^{1,2}$ and $[0,1]$ is compact, we get
$\bar{r} > 0$ with
\begin{equation} \label{e:from4}
 \sup_{t \in [0,1]} \, \sup_{|y| \leq \bar{r}}\,  ||T_y \gamma (\cdot   , t)
- \gamma (\cdot , t)||_{C^0 \cap W^{1,2}}
 <
\mu  \, .
\end{equation}
Next fix a smooth radial mollifier $\phi \geq 0$ with integral one
and compact support in the unit ball in $\RR^3$. For each $r \in
(0, 1)$, define $\phi_r (x) = r^{-3} \, \phi (x/r)$ and set
\begin{equation}    \label{e:defgr}
    \gamma_r (x,t) =
\int_{B_r(0)} \phi_r (y) \gamma ( T_y(x) \, ,t) \, dy =
    \int_{B_r(x)} \phi_r (x-y) \gamma ( \frac{y}{|y|} \, ,t) \, dy
    \, .
\end{equation}
We have the following standard properties of convolution with a
mollifier (see, e.g., section $5.3$ and appendix C.$4$ in
\cite{E}):  First, each ${\gamma}_r (\cdot , t)$ is smooth and for
each $k$ the map   $t \to {\gamma}_r (\cdot , t)$ is continuous
from $[0,1]$ to $C^k (\SS^2 , \RR^N)$.  Second,
\begin{align}
 ||{\gamma}_r (\cdot , t) - \gamma (\cdot ,t)||^2_{C^0}
&\leq \sup_{|y| \leq r} \, ||T_y \gamma (\cdot   , t)
- \gamma (\cdot , t)||^2_{C^0} \, ,\label{e:molli}\\
||\nabla {\gamma}_r (\cdot, t) - \nabla \gamma (\cdot ,
t)||^2_{L^2} &\leq \sup_{|y| \leq r} \, ||T_y \gamma (\cdot  , t)
- \gamma (\cdot , t)||^2_{L^2} \, .\notag
\end{align}
It follows from \eqr{e:molli} and \eqr{e:from4} that for $r\leq
\bar{r}$ and all $t$ we have
\begin{equation}    \label{e:unid}
     ||{\gamma}_r (\cdot , t) - \gamma (\cdot , t)||_{C^0 \cap W^{1,2}}
<  \mu \, .
\end{equation}

The map ${\gamma}_r (\cdot , t)$ may not   land in $M$, but it is
in $M_{\delta}$ when $\mu$ is small by \eqr{e:unid}.  Hence, the
map $\tilde{\gamma} (x,t) = \Pi \circ \gamma_r (x,t)
$
satisfies \eqr{e:dense},
 each
slice $\tilde{\gamma} (\cdot , t)$ is $ C^{2}$, and  $t \to
\tilde{\gamma} (\cdot , t)$ is continuous from $[0,1]$ to $C^2
(\SS^2 , M)$. Finally, $s \to \tilde{\gamma}_{sr}$ is an explicit
homotopy connecting $\tilde{\gamma}$ and $\gamma$.
\end{proof}

\subsection{Equivalence of energy and area}

We will also need the existence of isothermal coordinates, taking
special care on the dependence on the metric.  Let
$\SS^2_{g_0}$ denote the round metric on $\SS^2$ with constant
curvature one.

\begin{Lem} \label{l:abe}
Given a $C^{1}$ metric $\tilde{g}$ on $\SS^2$, there is a unique
orientation preserving $C^{1,1/2}$ conformal diffeomorphism
$h_{\tilde{g}}: \SS^2_{g_0} \to \SS^2_{\tilde{g}}$ that fixes $3$
given points.

Moreover, if $\tilde{g}_1$ and $\tilde{g}_2$ are two $C^{1}$
metrics that are both $\geq \epsilon \, g_0$ for some $\epsilon >
0$, then
\begin{equation}    \label{e:contRM}
        ||h_{\tilde{g}_1} - h_{\tilde{g}_2} ||_{C^0 \cap W^{1,2}} \leq C \,
        ||\tilde{g}_1 - \tilde{g}_2||_{C^0} \, ,
\end{equation}
where the constant $C$ depends on $\epsilon$ and the maximum of
the $C^1$ norms of the $\tilde{g}_i$'s.
\end{Lem}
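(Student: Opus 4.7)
The argument splits into existence/uniqueness, interior regularity, and the quantitative stability \eqr{e:contRM}; the last is the main point. For existence and uniqueness, I would invoke the uniformization theorem for the sphere: any $C^{1}$ Riemannian metric $\tilde g$ on $\SS^2$ is conformally equivalent to $g_0$, so there is an orientation preserving conformal diffeomorphism $\phi:\SS^2_{g_0}\to\SS^2_{\tilde g}$. Because the conformal automorphism group of $\SS^2_{g_0}$ is the group of orientation preserving M\"obius transformations, which acts simply triply transitively on ordered triples of distinct points, a unique M\"obius precomposition normalizes $\phi$ to send the three prescribed domain points to the three prescribed image points. Any other conformal diffeomorphism with the same normalization would differ from this $h_{\tilde g}$ by a M\"obius transformation fixing three points, hence agree with it.

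For the $C^{1,1/2}$ regularity, I would work in local charts where the conformality of $h$ reduces to a Beltrami equation $\partial_{\bar z}h=\mu(\tilde g)\,\partial_z h$. The complex dilatation $\mu(\tilde g)$ is an explicit algebraic function of the chart coefficients of $\tilde g$, satisfies $|\mu(\tilde g)|\leq k<1$ uniformly whenever $\tilde g\geq\epsilon g_0$ (with $k=k(\epsilon)$), and belongs to $C^{0,1/2}$ whenever $\tilde g\in C^{1}$. Standard Schauder-type estimates for the Beltrami operator then yield $h\in C^{1,1/2}_{\mathrm{loc}}$ with bounds depending only on $\epsilon$ and $\|\tilde g\|_{C^{1}}$.

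The crucial preliminary for stability is that a conformal map has energy equal to area: from $h_{\tilde g}^{\ast}\tilde g=e^{2\phi}g_0$ we get $|dh_{\tilde g}|^2_{g_0,\tilde g}=2e^{2\phi}$, hence $\Energy(h_{\tilde g})=\Area(\SS^2_{\tilde g})$ is bounded by a constant depending only on $\|\tilde g\|_{C^0}$. Combined with $\tilde g\geq\epsilon g_0$, this yields uniform $W^{1,2}$ bounds on $h_{\tilde g}$ (and, interchanging source and target, on $h_{\tilde g}^{-1}$) in terms of $\epsilon$ and the $C^0$ norms of the $\tilde g_i$'s.

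To prove \eqr{e:contRM}, I would form $F=h_{\tilde g_1}\circ h_{\tilde g_2}^{-1}$, a self-quasiconformal map of $\SS^2$ fixing the three normalized points and solving a Beltrami equation $\partial_{\bar w}F=\tilde\mu\,\partial_w F$ whose coefficient $\tilde\mu$ is a pointwise Lipschitz function of $(\mu(\tilde g_1)\circ h_{\tilde g_2}^{-1},\,\mu(\tilde g_2)\circ h_{\tilde g_2}^{-1})$ that vanishes when $\tilde g_1=\tilde g_2$. Consequently $\|\tilde\mu\|_{C^0}\leq C\,\|\tilde g_1-\tilde g_2\|_{C^0}$ with $C=C(\epsilon,\max_i\|\tilde g_i\|_{C^1})$. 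Applying the Ahlfors--Bers linear stability theorem for the Beltrami operator to normalized self-maps of $\SS^2$ gives $\|F-\mathrm{id}\|_{C^0}+\|\nabla(F-\mathrm{id})\|_{L^2}\leq C'\,\|\tilde\mu\|_{C^0}$; composing back with $h_{\tilde g_2}$ and using the uniform $C^{1,1/2}$ and $W^{1,2}$ bounds from the previous step converts this into \eqr{e:contRM}. \textbf{The main obstacle is this last step}: extracting a genuine \emph{Lipschitz} estimate in the $C^0\cap W^{1,2}$ norm (not merely qualitative continuity) from the Beltrami equation. A concrete alternative, which avoids directly invoking Ahlfors--Bers, is a contradiction-compactness argument using normal families of quasiconformal maps to obtain $C^0$-precompactness of $\{h_{\tilde g}\}$, followed by a uniform implicit-function-theorem application to the solution map $\tilde g\mapsto h_{\tilde g}$ at each base metric, which provides the Lipschitz estimate on the compact set of admissible $\tilde g$ that is $C^0$-bounded and satisfies $\tilde g\geq\epsilon g_0$ with $\|\tilde g\|_{C^1}$ bounded.
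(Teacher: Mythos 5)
Your setup (uniformization plus M\"obius normalization for existence and uniqueness, the Beltrami equation with H\"older coefficient for the $C^{1,1/2}$ regularity, and the identity $\Energy(h_{\tilde g})=\Area(\SS^2_{\tilde g})$ for the a priori $W^{1,2}$ bound) is fine, but the heart of the lemma is the quantitative estimate \eqr{e:contRM}, and that is exactly the step you do not establish. You reduce it to a ``linear stability theorem for the Beltrami operator'' applied to $F=h_{\tilde g_1}\circ h_{\tilde g_2}^{-1}$, then flag this as the main obstacle, and your proposed workaround does not close the gap: a normal-families/contradiction-compactness argument only yields qualitative continuity (or at best a modulus of continuity) of $\tilde g\mapsto h_{\tilde g}$, never a Lipschitz constant of the form $C(\epsilon,\max_i\|\tilde g_i\|_{C^1})$, because in the contradiction sequence one must allow $\|\tilde g_1^j-\tilde g_2^j\|_{C^0}\to 0$, and ruling that case out requires a linearized (differentiated) statement about the solution map $\mu\mapsto h^{\mu}$. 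That differentiability (in fact analytic dependence of the normalized solution on the Beltrami coefficient, with the accompanying sup-norm and $L^p$, $p>2$, bounds) is precisely the hard analytic content of Ahlfors--Bers; invoking an ``implicit function theorem for the solution map'' presupposes it, so the fallback is circular rather than an alternative.

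For comparison, the paper's proof is short and entirely by citation: existence of $h_{\tilde g}$ comes from the Riemann mapping theorem for variable metrics (theorem $3.1.1$ and corollary $3.1.1$ in \cite{Jo}; cf. \cite{ABe}), the $C^0$ half of \eqr{e:contRM} is lemma $17$ of \cite{ABe} (a genuine Lipschitz bound for normalized solutions in terms of $\|\mu_1-\mu_2\|_{\infty}$, hence in terms of $\|\tilde g_1-\tilde g_2\|_{C^0}$ once one checks that $\tilde g\mapsto\mu(\tilde g)$ is pointwise Lipschitz on the set $\tilde g\geq\epsilon g_0$ with bounded $C^1$ norm), and the $W^{1,2}$ half follows from theorem $8$ and estimate (8) of \cite{ABe}, which give $\|\nabla(h_{\tilde g_1}-h_{\tilde g_2})\|_{L^p(B_1)}\leq C\|\tilde g_1-\tilde g_2\|_{C^0}$ for some $p>2$ on unit balls, after which one covers $\SS^2$ by finitely many balls and applies H\"older. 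So the estimate you wanted to ``apply'' does exist in exactly the needed quantitative form, and your first instinct (cite Ahlfors--Bers) was the right one; but as written your proposal neither proves it nor cites it in a usable form, and without it \eqr{e:contRM} is unproven. A secondary, fixable point: for $h:(\SS^2,g_0)\to(\SS^2,\tilde g)$ conformal, the Beltrami coefficient in the equation for $h$ appears composed with $h$ itself (it is $h^{-1}$ that satisfies the Beltrami equation with coefficient depending only on the point), so your passage from $\|\tilde g_1-\tilde g_2\|_{C^0}$ to the coefficient of $F$ needs the uniform $C^1$ control of $h_{\tilde g_2}^{-1}$ you set up earlier; this is harmless but should be said.
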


\begin{proof}
The Riemann mapping theorem for variable  metrics (see theorem
$3.1.1$ and corollary $3.1.1$ in \cite{Jo}; cf. \cite{ABe} or
\cite{Mo2}) gives the conformal diffeomorphism $h_{\tilde{g}}:
\SS^2_{g_0} \to \SS^2_{\tilde{g}}$.

We will separately bound the $C^0$ and $W^{1,2}$ norms. First,
 lemma $17$ in \cite{ABe}
gives
\begin{equation}    \label{e:contlinf}
    ||h_{\tilde{g}_1} - h_{\tilde{g}_2} ||_{C^0}  \leq C_1 \,
        ||\tilde{g}_1 - \tilde{g}_2||_{C^0} \, ,
\end{equation}
where $C_1$ depends on $\epsilon$ and the $C^0$ norms of the
metrics.  Second, theorem $8$ in \cite{ABe} gives a uniform $L^p$
bound for $\nabla (h_{\tilde{g}_1} - h_{\tilde{g}_2})$ on any unit
ball in $\SS^2$ where $p>2$ by (8) in \cite{ABe}
\begin{equation}  \label{e:contl2}
    ||\nabla (h_{\tilde{g}_1} - h_{\tilde{g}_2}) ||_{L^p(B_1)}  \leq C_2 \,
        ||\tilde{g}_1 - \tilde{g}_2||_{C^0(\SS^2)} \, ,
\end{equation}
where $C_2$ depends on $\epsilon$ and the $C^0$ norms of the
metrics.  Covering $\SS^2$ by a finite collection of unit balls
and applying H\"older's inequality gives the desired energy bound.
\end{proof}

We can now prove the equivalence of the two widths.

\begin{proof}
(of Proposition \ref{p:eq}). By  \eqr{e:tr1}, we have that $W_A
\leq W_E$.  To prove that
 $W_E \leq W_A$, given
  $\epsilon > 0$, let
$\gamma \in \Omega_{\beta}$ be  a sweepout with $\max_{ t \in [0,
1]} \,  \Area \, (\gamma (\cdot , t )) < W_A + \epsilon/2$.  By
Lemma \ref{l:density}, there is a regularization $\tilde{\gamma}
\in \Omega_{\beta}$ so that each slice $\tilde{\gamma} (\cdot ,
t)$ is $ C^{2}$,  the map $t \to \tilde{\gamma} (\cdot , t)$ is
continuous from $[0,1]$ to $C^2 (\SS^2 , M)$, and (also by
\eqr{e:jinL1})
\begin{equation}
    \max_t \, \, \Area \, ( \tilde{\gamma} (\cdot , t)) < W_A + \epsilon \,
    .
\end{equation}
The maps $\tilde{\gamma} (\cdot , t)$ induce a continuous
one-parameter family of pull-back (possibly degenerate) $C^1$
metrics $g(t)$ on $\SS^2$.
  Lemma \ref{l:abe}
requires that the metrics be non-degenerate, so define perturbed
metrics $\tilde{g}(t) = g(t) + \delta \, g_0$.  For each $t$,
Lemma \ref{l:abe}
 gives $C^{1,1/2}$ conformal diffeomorphisms $h_t:
\SS^2_{g_0} \to \SS^2_{\tilde{g}(t)}$ that vary continuously in
$C^0 \cap W^{1,2} (\SS^2 , \SS^2)$.
  The continuity of $t \to \tilde{\gamma}
(\cdot , t) \circ h_t$ as a map from $[0,1]$ to $C^0 \cap W^{1,2}
(\SS^2, M)$ follows from this, the continuity of $t \to
\tilde{\gamma} (\cdot , t)$ in $C^2$, and the chain
  rule.

We will now use the conformality of the map $h_t$ to control the
energy of the composition as in proposition $5.4$ of \cite{SW}.
Namely, we have that
\begin{align}   \label{e:htg}
    \Energy \, (\tilde{\gamma}
(\cdot , t) \circ h_t) &= \Energy \,  (h_t: \SS^2_{g_0} \to
\SS^2_{{g}(t)}  ) \leq \Energy \,  (h_t: \SS^2_{g_0}
\to \SS^2_{\tilde{g}(t)}  ) \notag \\
&=  \Area \,  (\SS^2_{\tilde{g}(t)})  = \int_{\SS^2} [
\det (g_0^{-1} \, g(t)) + \delta \, \Tr  (g_0^{-1} \, g(t)) + \delta^2 ]^{1/2} \, dvol_{g_0} \\
&\leq \Area \, (\SS^2_{
 g(t)}) + 4\pi   \, [\delta^2 +  2\, \delta \, \sup_{t}\,  |g_0^{-1} \, g(t)| ]^{1/2}\, .
\notag
\end{align}
Choose $\delta > 0$ so that $4\pi \, [\delta^2 +  2\, \delta \,
\sup_{t}\,  |g_0^{-1} \, g(t)| ]^{1/2} < \epsilon$.

 We would
  be done if $\tilde{\gamma} (\cdot , t) \circ h_t$ was
   homotopic to   $\tilde{\gamma}$.  However,  the space of orientation
preserving diffeomorphisms of $\SS^2$ is homotopic to  $\RP^3$ by
Smale's theorem.   To get around this, note that $t\to ||\tilde{\gamma}
(\cdot , t)||_{C^2}$ is continuous and zero when $t=1$, thus for some
$\tau < 1$
\begin{equation}    \label{e:endpt}
   \sup_{t \geq \tau}  ||\tilde{\gamma} (\cdot ,
t)||_{C^2} \leq \frac{\epsilon}{ \sup_{t \in [0,1]}
||h_t||^2_{W^{1,2}} } \, .
\end{equation}
Consequently, if we set $\tilde{h}_t$ equal to $h_t \equiv h(t)$
on $[0,\tau]$ and equal to $h(\tau(1-t)/(1-\tau))$ on $[\tau ,
1]$, then \eqr{e:htg} and \eqr{e:endpt} imply that $
    \max_{t \in [0,1]} \, \Energy \, (\tilde{\gamma} (\cdot , t) \circ
    \tilde{h}_t ) \leq W_A  +  2\, \epsilon  \, .
$
Moreover, the map $\tilde{\gamma} (\cdot , t) \circ
    \tilde{h}_t$ is also in $\Omega$.    Finally, replacing $\tau$ by $s \tau$ and taking $s \to 0$ gives an
    explicit homotopy in $\Omega$ from $\tilde{\gamma} (\cdot , t) \circ
    \tilde{h}_t$ to $\tilde{\gamma} (\cdot , t)$.
\end{proof}


\begin{thebibliography}{A}

\bibitem[ABe]{ABe}
L. Ahlfors and L. Bers,   Riemann's mapping theorem for variable
metrics,  {\emph{Ann. of Math.}} (2)  72  (1960) 385--404.

\bibitem[Al]{Al}
F.J. Almgren, The theory of varifolds, Mimeographed notes,
Princeton, 1965.


\bibitem[B1]{B1}
G.D. Birkhoff, Dynamical systems with two degrees of freedom.
{\emph{TAMS}} 18 (1917), no. 2, 199--300.



\bibitem[B2]{B2}
G.D. Birkhoff, Dynamical systems, AMS Colloq. Publ. vol 9,
Providence, RI, 1927.







\bibitem[ChYa]{ChYa}
D. Christodoulou and S.T. Yau,  Some remarks on the quasi-local
mass, Mathematics and general relativity (Santa Cruz, CA, 1986),
9--14, Contemp. Math., vol. 71, AMS, Providence, RI, 1988.





\bibitem[CD]{CD}
T.H. Colding and C. De Lellis, The min--max construction of
minimal surfaces,  Surveys in differential geometry, Vol. VIII (Boston, MA,
2002),  75--107, Int. Press, Somerville, MA, 2003,
math.AP/0303305.

\bibitem[CDM]{CDM}
T.H. Colding,  C. De Lellis, and W.P. Minicozzi II,  Three circles
theorems for Schr\"odinger operators on  cylindrical ends and
geometric applications, {\emph{CPAM}}, to appear, math.DG/0701302.


\bibitem[CM1]{CM1}
T.H. Colding and W.P. Minicozzi II,  Estimates for the extinction
time for the Ricci flow on certain $3$--manifolds and a question
of Perelman, {\emph{JAMS}},  18  (2005),  no. 3, 561--569,
math.AP/0308090.


\bibitem[CM2]{CM2}
T.H. Colding and W.P. Minicozzi II, \textit{Minimal surfaces},
Courant Lecture Notes in Mathematics, 4. New York University,
Courant
  Institute of Mathematical Sciences, New York, 1999.

\bibitem[CM3]{CM3}
T.H. Colding and W.P. Minicozzi II, Width and mean curvature flow,
preprint, math.DG/0705.3827.

\bibitem[Cr]{Cr} C.B. Croke, Area and the length of the shortest
closed geodesic, {\emph{JDG}},  27  (1988),  no. 1, 1--21.


\bibitem[E]{E}
L.C. Evans,   Partial differential equations. Graduate Studies in
Math., 19. AMS, Providence, RI, 1998.

\bibitem[G]{G}
M. Gromov, Pseudo holomorphic curves in symplectic manifolds,
{\emph{Invent. Math.}}, 85 (1985) 307--347.

\bibitem[Ha1]{Ha1}
R. Hamilton, Three-manifolds with positive Ricci curvature,
{\emph{JDG}} 17 (1982) 255-306.



\bibitem[Ha2]{Ha2}
R. Hamilton,   The formation of singularities in the Ricci flow,
Surveys in differential geometry, Vol. II (Cambridge, MA, 1993),
7--136, Int. Press, Cambridge, MA, 1995.




\bibitem[He1]{He1}
F. H\'elein,   Harmonic maps, conservation laws, and moving
frames. Cambridge Tracts in Mathematics, 150. Cambridge University
Press, Cambridge, 2002.

\bibitem[He2]{He2}
F. H\'elein,   R\'egularit\'e des applications harmoniques entre
une surface et une vari\'et\'e riemannienne, {\emph{CRAS}}, 312
(1991) 591--596.








\bibitem[Jo]{Jo}
J. Jost, Two--dimensional geometric variational problems, J. Wiley
and Sons, Chichester, N.Y.   (1991).


\bibitem[MeSiYa]{MeSiYa}
W. Meeks  III, L. Simon, and S.T. Yau,    Embedded minimal
surfaces, exotic spheres and manifolds with positive Ricci
curvature,
 {\emph{Ann. of Math.}} (2) 116 (1982) 621--659.

\bibitem[Mo1]{Mo1}
C. Morrey,  Multiple Integrals in the Calculus of Variations,
Springer-Verlag, New York, 1966.


\bibitem[Mo2]{Mo2}
C. Morrey, On the solutions of quasi-linear elliptic partial
differential equations, {\emph{TAMS}} 43 (1938) 126--166.

\bibitem[Mu]{Mu}
J. R. Munkres, Topology: A First Course,  Prentice-Hall, 1975.


\bibitem[Pa]{Pa} T. Parker,   Bubble tree convergence for harmonic maps,
{\emph{JDG}}  44  (1996)  595--633.

\bibitem[PaW]{PaW} T. Parker and J. Wolfson,  Pseudo-holomorphic maps and
 bubble trees,  {\emph{JGA}}  3  (1993)   63--98.

\bibitem[Pe]{Pe}
G. Perelman, Finite extinction time for the solutions to the Ricci
flow on certain three--manifolds,  math.DG/0307245.

\bibitem[Pi]{Pi}
J.T. Pitts, Existence and regularity of minimal surfaces on
Riemannian manfolds, Princeton University Press, Princeton, NJ;
University of Tokyo Press, Tokyo 1981.


\bibitem[Q]{Q}
J. Qing,   Boundary regularity of weakly harmonic maps from
surfaces, {\it JFA}, 114 (1993)  458--466.

\bibitem[Ri]{Ri}
T. Rivi{\`e}re, Conservation laws for conformally invariant
variational problems, \textit{Invent. Math.} 168(1) (2007), 1--22.


\bibitem[R]{R}
H. Royden, Real Analysis, 3rd edition, Macmillan, London, 1988.

\bibitem[SaU]{SaU}
J. Sacks and K. Uhlenbeck, The existence of minimal immersions of
$2$-spheres,  {\emph{Ann. of Math.}} (2)  113  (1981) 1--24.

\bibitem[Sj]{Sj}
 J. H. Sampson, Some properties and applications of harmonic mapings,  {\emph{Ann. Scient. Ec. Norm. Sup.}}
  11 (1977) 211--228.


\bibitem[SU1]{SU1}
R. Schoen and K. Uhlenbeck, A regularity theory for harmonic maps,
{\emph{JDG}} 17 (1982), no. 2, 307--335.

 \bibitem[SU2]{SU2}
R. Schoen and K. Uhlenbeck, Boundary regularity and the Dirichlet
problem for harmonic maps,
  {\emph{JDG}} 18  (1983),  no. 2, 253--268.

\bibitem[SW]{SW}
R. Schoen and J. Wolfson, Minimizing area among Lagrangian
surfaces: the mapping problem, {\emph{JDG}} 58 (2001) 1--86.

\bibitem[SY]{SY}
R. Schoen and S.T. Yau, Lectures on harmonic maps, Int.
Press (1997).

\bibitem[Sc1]{Sc1}
H. A. Schwarz, \"Uber einen Grenzübergang durch altirneirendes
Verfahren, {\emph{Vierteljahrsschrift der Naturforschenden
Gesellschaft in Z\"urich}}, (1870) 15, 272–-286.



\bibitem[Sc2]{Sc2}
H. A. Schwarz,   Gesammelete Mathematische Abhandlungen, Springer,
Berlin, 1890.






\bibitem[SiY]{SiY} Y.-T. Siu and S.T. Yau, Compact K\"ahler manifolds of
positive
bisectional curvature, {\it Invent. Math}, 59 (1980) 189--201.


\bibitem[St]{St}
M. Struwe,   On the evolution of harmonic mappings of Riemannian
surfaces, {\emph{CMH}} 60 (1985) 558--581.

\bibitem[W]{W}
H. Wente,   An existence theorem for surfaces of constant mean
curvature, {\emph{J. Math. Anal. Appl.}}, 26 (1969), 318--344.



 \end{thebibliography}
\end{document}